\documentclass[twoside,11pt]{article}
\usepackage[preprint]{jmlr2e}

% \usepackage{blindtext}

% Any additional packages needed should be included after jmlr2e.
% Note that jmlr2e.sty includes epsfig, amssymb, natbib and graphicx,
% and defines many common macros, such as 'proof' and 'example'.
%
% It also sets the bibliographystyle to plainnat; for more information on
% natbib citation styles, see the natbib documentation, a copy of which
% is archived at http://www.jmlr.org/format/natbib.pdf

% Available options for package jmlr2e are:
%
%   - abbrvbib : use abbrvnat for the bibliography style
%   - nohyperref : do not load the hyperref package
%   - preprint : remove JMLR specific information from the template,
%         useful for example for posting to preprint servers.
%
% Example of using the package with custom options:
%
% \usepackage[abbrvbib, preprint]{jmlr2e}

% Definitions of handy macros can go here
\usepackage{amsmath,amsthm,mathtools,fixmath}

\usepackage{url}
\numberwithin{equation}{section}
\usepackage{xcolor}
 \usepackage{indentfirst}
\usepackage{booktabs}
\usepackage{multirow}
\usepackage{tikz}
\usetikzlibrary{decorations.pathreplacing, decorations.pathmorphing, decorations.shapes}
\usetikzlibrary{arrows,calc}
\usepackage{mathrsfs,esint}
\usepackage{comment}
\usepackage{enumerate}
\usepackage{hyperref}    
\hypersetup{
     colorlinks = true,
     linkcolor = red,
     anchorcolor =green!50!black,
     citecolor = green!50!black,
     filecolor =green!50!black,
     urlcolor = green!50!black
}
\usepackage{cleveref} 
\usepackage{enumitem}
\usepackage{soul}

\usepackage{float}
\usepackage[ruled, vlined]{algorithm2e}

\usepackage{subcaption}
\usepackage{pgfplots}
\pgfplotsset{compat=newest, compat/show suggested version=false}
\usepackage{tikz}
\usetikzlibrary{intersections}
\usepackage{tikz-3dplot}
\tdplotsetmaincoords{60}{20}

\newtheorem{theorem}{Theorem}
\newtheorem{lemma}[theorem]{Lemma} 
\newtheorem{proposition}[theorem]{Proposition} 
\newtheorem{remark}[theorem]{Remark}

\newtheorem{assumption}[theorem]{Assumption}

% % % % % % % % % % %our macros % % % % % % % % % % % % % % %
\newcommand{\de}{\mathop{}\!\mathrm{d}}

\DeclareMathOperator{\tr}{tr}
\DeclareMathOperator{\supp}{supp}

\DeclarePairedDelimiter{\norm}{\lVert}{\rVert}
\DeclarePairedDelimiter{\abs}{\lvert}{\rvert}
\DeclarePairedDelimiter{\pair}{\langle}{\rangle}

\newcommand{\beq}{\begin{equation}}
\newcommand{\eeq}{\end{equation}}
\newcommand{\sgn}{\operatorname{sgn}}
\newcommand{\dist}{\text{dist}}
\newcommand{\argmin}{\text{argmin}}

%%%%%%%%%% MATHCAL %%%%%%%%%%

\def\cB{\mathcal{B}}

\def\cE{\mathcal{E}}
\def\cF{\mathcal{F}}

\def\cH{\mathcal{H}}

\def\cL{\mathcal{L}}

\def\cN{\mathcal{N}}
\def\cO{\mathcal{O}}

\def\cR{\mathcal{R}}

\def\cU{\mathcal{U}}
\def\cV{\mathcal{V}}

%%%%%%%%%% MATHBB %%%%%%%%%%

\def\N{\mathbb{N}}

\def\R{\mathbb{R}}

%%%%%%%%%% MATHSCR %%%%%%%%%%

\def\sD{\mathscr{D}}

%%%%%%%%%% MATHFRAK %%%%%%%%%%

%%%%%%%%%%%%% BM %%%%%%%%%%%%%%%%
%\def\ab{\bm a}
%\def\bb{\bm b}
%\def\cb{\bm c}
%\def\db{\bm d}
%\def\eb{\bm e}
%\def\fb{\bm f}
%\def\gb{\bm g}
%\def\hb{\bm h}
%\def\ib{\bm i}
%\def\jb{\bm j}
%\def\kb{\bm k}
%\def\lb{\bm l}
%\def\mb{\bm m}
%\def\nb{\bm n}
%\def\ob{\bm o}
%\def\pb{\bm p}
%\def\qb{\bm q}
%\def\rb{\bm r}
%\def\sb{\bm s}
%\def\tb{\bm t}
%\def\ub{\bm u}
%\def\vb{\bm v}
%\def\wb{\bm w}
%\def\xb{\bm x}
%\def\yb{\bm y}
%\def\zb{\bm z}

%%%%%%%%%% Greek letters %%%%%%%%%

\newcommand{\del}{\delta}

\newcommand{\zs}[1]{{\color{blue!50!black} {\bf Zihan:} [#1] }}
\newcommand{\kp}[1]{{\color{purple} {\bf Konstantin:} [#1] }}
\newcommand{\xt}[1]{{\color{orange} {\bf Xiaochuan:} [#1] }}

\newcommand{\revision}[1]{{\color{black}{}#1}}

\usepackage{todonotes}

% Heading arguments are {volume}{year}{pages}{date submitted}{date published}{paper id}{author-full-names}

\usepackage{lastpage}
\jmlrheading{23}{2025}{1-\pageref{LastPage}}{6/05}{9/22}{21-0000}{Shao, Pieper And Tian}

% Short headings should be running head and authors last names

\ShortHeadings{Solving Nonlinear PDEs with Sparse RBF Networks}{Shao, Pieper And Tian}
\firstpageno{1}

\begin{document}

\title{Solving Nonlinear PDEs with Sparse Radial Basis Function Networks}

\author{\name Zihan Shao \email z6shao@ucsd.edu \\
\addr Department of Mathematics\\
University of California, San Diego\\
La Jolla, CA 92093, United States
\AND
\name Konstantin Pieper \email pieperk@ornl.gov \\
\addr Computer Science and Mathematics Division\\
Oak Ridge National Laboratory\\
Oak Ridge, TN 37831, United States
\AND
\name Xiaochuan Tian \email xctian@ucsd.edu \\
\addr Department of Mathematics\\
University of California, San Diego\\
La Jolla, CA 92093, United States
}

\editor{My editor}

\maketitle

\begin{abstract}%  
We propose a novel framework for solving nonlinear PDEs using sparse radial basis function (RBF) networks. Sparsity-promoting regularization is employed to prevent over-parameterization and reduce redundant features. 
This work is motivated by longstanding challenges in traditional RBF collocation methods, along with the limitations of physics-informed neural networks (PINNs) and Gaussian process (GP) approaches, aiming to blend their respective strengths in a unified framework. 
The theoretical foundation of our approach lies in the function space of Reproducing Kernel Banach Spaces (RKBS) induced by one-hidden-layer neural networks of possibly infinite width. We prove a representer theorem showing that the sparse optimization problem in the RKBS admits a finite solution and establishes error bounds that offer a foundation for generalizing classical numerical analysis.  The algorithmic framework is based on a three-phase algorithm to maintain computational efficiency through adaptive feature selection, second-order optimization, and pruning of inactive neurons. Numerical experiments demonstrate the effectiveness of our method and highlight cases where it offers notable advantages over GP approaches. This work opens new directions for adaptive PDE solvers grounded in rigorous analysis with efficient, learning-inspired implementation.
\end{abstract}

\begin{keywords}
  Sparse RBF networks; Nonlinear PDEs; Reproducing Kernel Banach Spaces; Representer theorem; Adaptive feature selection; Convergence analysis; Adaptive collocation solver
\end{keywords}

\section{Introduction}

This work introduces a sparse neural network approach for solving nonlinear partial differential equations (PDEs).
An adaptive training process is introduced for shallow neural networks with sparsity-promoting regularization, where neurons are gradually added to maintain a compact network structure. 
The PDEs considered in this paper are defined in a bounded open set $D\subset \R^d$ and subject to ``boundary conditions'' on $\partial D$ of the following form:
\beq
\label{eq:main}
\left\{
\begin{aligned}
& \cE[u] (x)  = 0,\quad x \in D, \\
& \cB [u] (x) = 0,\quad  x \in \partial D.  
\end{aligned}
\right.
\eeq 
Here, $ \cE[u](x) $ and $ \cB[u](x) $ are defined as  
\beq
\label{eq:PDEform}
\cE[u](x) := E(x, u(x), \nabla u(x), \nabla^2 u(x)), \quad  
\cB[u](x) := B(x, u(x), \nabla u(x)).
\eeq
where $E$ is a real-valued function on $\overline{D} \times \R \times \R^d \times \R^{d\times d}$ and 
$B$ is a real-valued function on $\partial D \times \R \times \R^d $  that determine the partial differential equation on \(D\) and the boundary condition on \(\partial D\), respectively. 
We note that here $\cB$ is a general boundary operator, which may lead to boundary or initial value problems in the context of PDEs.
A typical example falling within this framework is a second-order semilinear elliptic PDE with Dirichlet boundary conditions. Additional assumptions on the PDEs and further examples are provided in \Cref{subsec:1.3}.

Classical numerical methods for solving PDEs, such as finite difference methods and finite element methods, have been extensively developed and broadly applied since the mid-20th century. Meshless methods represent a more recent development, with origins tracing back at least to the 1970s through the introduction of smoothed particle hydrodynamics for astrophysical simulations \citep{gingold1977smoothed,lucy1977numerical}. These methods gained increasing attention in the 1990s, leading to the emergence of various approaches, including the element-free Galerkin method, reproducing kernel particle methods, radial basis function (RBF) methods, and others \citep{belytschko1994element,buhmann2000radial,fasshauer1996solving,franke1998solving,liMeshfreeParticleMethods2007,liu1996overview,wendland2004scattered}. 
A notable connection exists between meshless methods and machine learning through the use of kernel techniques, which have achieved considerable success in various machine learning applications \citep{scholkopf2018learning}.  The link between kernel-based approaches in machine learning and numerical solutions of PDEs was insightfully highlighted in the survey by \citet{schaback2006kernel}.
Kernel-based approaches are particularly effective for linear problems, such as regression or numerical solutions of linear PDEs, where the underlying solvers depend on techniques from numerical linear algebra. The recent popularity of neural networks has shifted the focus toward inherently nonlinear techniques, even for linear problems \citep{weinan2018deep,goodfellow2016deep,raissi2019physics}. As a result, optimization has begun to replace traditional linear algebra as the central computational framework.
However, neural network-based approaches, such as physics-informed neural networks (PINNs) \citep{raissi2019physics}, present significant challenges in training, due to their sensitivity to hyperparameter tuning and the substantial computational cost associated with large, often over-parameterized architectures that may include many redundant features.
A notable recent development is the Gaussian process (GP) approaches \citep{batlle2025error,chen2021solving,chen2025sparse}, which extend kernel-based methods to solving nonlinear PDEs. 
The central idea of GP typically involves solving a linear system as an inner step combined with an outer optimization step. However, since they are reduced to classical kernel-based or RBF methods in linear cases, many of the limitations associated with traditional RBF approaches still apply.

This work is originally motivated by the aim of addressing some of the persistent limitations of traditional RBF collocation methods. 
While RBF collocation methods are well known for their advantages, such as being inherently meshfree and offering fast or even spectral convergence in certain cases, they also face significant challenges. A central difficulty lies in the choice of the kernel scaling parameter, which plays a critical role in the success of the method but lacks a universal selection criterion. Choosing this parameter typically involves a trade-off between approximation accuracy and the condition number of the resulting system, and there is no clear guideline for achieving an optimal balance. For large linear systems, RBF collocation often leads to extremely ill-conditioned systems. In their survey, \citet{schaback2006kernel} envisioned that one important direction for developments in kernel-based approximation should focus on adaptive algorithms that solve the problems approximately, thereby alleviating the issue of ill-conditioning in large systems. 
The proposed approach, which is broadly applicable, is an actualization of their concept of adaptivity by selecting only a small number of features through sparsity-promoting regularization. Importantly, the kernel scale parameter is also determined adaptively, eliminating the need for tuning and resolving the long-standing issue of scale selection.
This represents a new development, even in the context of kernel-based linear regression or function approximation.
Our work is also motivated by the goal of addressing some of the issues of PINNs and GP methods. In this sense, the proposed approach can be viewed as: (1) a PINN framework trained using a shallow and sparse neural network for more efficient optimization, and (2) an extension of GP methods that enables adaptive selection of kernel parameters.
We note that the proposed approach can be viewed as a relaxation of kernel collocation methods, as the empirical loss is constructed from pointwise evaluations of PDEs. This leads to an interpretation of the method as an ``adaptive collocation solver". The formulation is intentionally simple, allowing us to clearly demonstrate the potential of the idea, both in terms of theoretical foundation and practical performance. Lastly, the adaptivity in our approach lies in the choice of trial functions. Other extensions, such as alternative loss function formulations or adaptivity in the selection of test points, may serve as promising directions for future research.\\

\paragraph{\it Structure of this section.}  We begin in \Cref{subsec:1.1} with a summary of our approach and a highlight of the main contributions. \Cref{subsec:1.2} discusses related work, where we compare our method with existing frameworks. 
 %including RKHS-based approaches, Gaussian processes, random feature models, and physics-informed neural networks (PINNs)
 In \Cref{subsec:1.3}, we introduce the notation and assumptions used throughout the paper and provide representative examples of the PDE problems considered. Finally, \Cref{subsec:1.4} outlines the structure of the rest of the paper.

\subsection{Summary of approach and contributions}
\label{subsec:1.1}
A shallow neural network of $N$ neurons is a function defined as
\begin{equation}
\label{eq:discrete_network}
    u_{c, \omega} (x) = \sum_{n=1}^{N} c_n \varphi(x; \omega_n),
\end{equation}
where $\varphi$ is the feature function, $c= \{c_n\}_{n=1}^{N}\subseteq \R$ represents the outer weights, and  $\omega = \{\omega_{n}\}_{n=1}^{N} \subseteq \Omega$ denotes the inner weights, with $\Omega$ being a prescribed parameter space. $N$ is also referred to as the network width. 
To illustrate our approach, we consider Gaussian Radial Basis Function (RBF) networks, where the feature function $\varphi$ is defined by
\revision{
\[
    \varphi(x;\; y, \sigma) = \frac{\sigma^{s}}{\left(\sqrt{2\pi}\sigma\right)^d}\exp\left(-\frac{\|x - y\|_{2}^{2}}{2\sigma^{2}}\right). 
\]
}
In this case, the inner weights $\{ \omega_{n} = (y_n, \sigma_n) \in \R^{d} \times \R_{+} \}$ represent the centers and shapes of the Gaussian functions. We note that the feature function is the standard Gaussian probability density function multiplied by the scale-dependent weight \(\sigma^s\), so that \(\int_{\R^d} \varphi(x;\; y, \sigma) \de x = \sigma^s\).
\revision{A common approach in the RBF literature is to omit the scaling in front of the exponential entirely, which corresponds to a choice of \(s = d\) up to a constant pre-factor \((\sqrt{2\pi})^{-d}\). For a physics informed loss associated to a PDE it is important to consider general \(s\), and set \(s\) larger than \(d+k\), where $k$ denotes the highest order of the differential operators evaluated in $\cE[u]$ and $\cB[u]$. Together with the regularization of the outer weights, this ensures stability and generalization properties; see \Cref{sec:theory}.}

As is customary in neural network-based PDE approximation, we define a measure \(\nu_D\) on \(D\) and \(\nu_{\partial D}\) on its boundary and consider the squared residual loss function 
\begin{align}
    \label{eq:loss}
    L(u) &=\frac{1}{2} \norm{\cE[u] }^2_{L^2(\nu_D)} + \frac{\lambda}{2} \norm{\cB[u]}^2_{L^2(\nu_{\partial D})}
\end{align}
where \(\lambda\) is an appropriate penalty parameter. 
%\beq
%\tag{$P_{c,\omega}$}
%\label{eq:sparse_min_discrete}
%\min_{N, c, \omega} \frac{1}{2} L(u_{c, \omega}) + \alpha \|c\|_{1}.
%\eeq
In practice, we use the empirical loss function $\hat{L} = \hat{L}_{K_1, K_2}$ defined as 
\begin{equation}
\label{eq:loss_discrete}
    \revision{\hat{L}_{K_1, K_2}(u)}=  \frac{1}{2} \sum_{k=1}^{K_1} w_{1,k}(\cE[u](x_{1,k}))^2 + \frac{\lambda}{2} \sum_{k=1}^{K_2}w_{2,k}(\cB[ u](x_{2,k}) )^2, 
\end{equation}
where $\{x_{1,k}\}_{k=1}^{K_1} \subseteq D$ and $\{x_{2,k}\}_{k=1}^{K_2} \subseteq \partial D$ \revision{are quadrature or collocation points, and $\{w_{1,k}\}_{k=1}^{K_1}\subseteq \R^+$ and  $\{w_{2,k}\}_{k=1}^{K_2} \subseteq \R^+$ are the associated (positive) weights used to approximate the two integrals in~\eqref{eq:loss} and the total number of points is $K=K_1+K_2$. For instance, we can consider a finite number of randomly generated points or (quasi-)uniform grids; the precise assumptions are stated in Assumption~\ref{assu:measureconvergence}.}

% \kp{Note: I have added the assumption that all weights are positive. I think this is something necessary, although it causes a restriction in general. For PINNs as in our numerics this is always the case.}

The neural network training is based on a sparse minimization problem with an $\ell_{1}$ regularization term:
\begin{equation}
\tag{$P_{N}^{\text{emp}}$}
\label{eq:emperical_sparse_min_discrete}
    \min_{N, c, \omega }   \hat{L}_{K_1, K_2}(u_{c, \omega}) + \alpha \|c\|_{1}
\end{equation}

Unlike most existing approaches, we do not fix the network width $N$ a prior; instead, it is treated as part of the optimization problem. However, under this formulation, the existence of a minimizer for the empirical problem \eqref{eq:emperical_sparse_min_discrete} is unclear. Conceptually, without a bound on the network width $N$, we might get a network solution of infinite width. Hence, we generalize the discrete network representation \eqref{eq:discrete_network} to a continuous formulation by introducing the integral neural network \citep{bach:2017,bengio2005convex,pieper2022nonconvex,rosset2007}, where $u$ is defined by
\begin{equation*}
\revision{u_{\mu}(x)} = \int \revision{\varphi(x;\omega)}\de\mu(\omega), \quad \mu\in M(\Omega).
\end{equation*}
Here $M(\Omega)$ is the space of signed Radon measures endowed with the total variation norm $\|\cdot\|_{M(\Omega)}$.
\revision{We also note that $u_\mu(x)$ is defined pointwise as a continuous function for \(s \geq d\) and an appropriate function space is introduced in the following.}
% \kp{I think we can directly see that \(s \geq d\) is sufficient for continuity. The result from Meyer is essentially that this gives the space \(B^0_{1,1} \subset C(D)\)}
Associated to the infinite width networks, we consider the continuous optimization problems
\begin{align}
    &\min_{\mu\in M(\Omega)}  L(u_{\mu}) + \alpha \|\mu\|_{M(\Omega)}\label{eq:sparse_min}\tag{$P_{\mu}$}\\
    &\min_{\mu\in M(\Omega)}  \hat{L}_{K_1,K_2} (u_{\mu}) + \alpha \|\mu\|_{M(\Omega)}\label{eq:empirical_sparse_min}\tag{$P_{\mu}^{\text{emp}}$}
\end{align}
We note the above characterization degrades to the prior discrete case when $\mu$ is a finite linear combination of Dirac-deltas.

We will establish the existence of minimizers for both \eqref{eq:sparse_min} and \eqref{eq:empirical_sparse_min}. More significantly, we will show that \eqref{eq:empirical_sparse_min} possesses a finite solution, reducing the problem to the discrete formulation \eqref{eq:emperical_sparse_min_discrete}. Nevertheless,  solving \eqref{eq:emperical_sparse_min_discrete} remains nontrivial since $N$ cannot be optimized through gradient-based methods. We then employ an adaptive search of neurons, where neurons are dynamically added and removed to maintain a compact network structure while minimizing the regularized loss, \revision{which we outline below in \Cref{subsec:adaptive_methods}.}
%\revision{\st{It follows an iterative refinement process found in sparse approximation and boosting-style} methods~\citep{friedman2001greedy,needell2009cosamp}. \st{The results can accurately approximate the PDE solution with high sparsity.}}

We note that functions represented by integral neural networks form a Banach space
\[
\cV(D) = \left\{  \int_\Omega \varphi(x; \omega)\de\mu(\omega)  \;\big|\; \mu \in M(\Omega) \right\}
\]
equipped with norm
\[
\norm{u}_{\cV(D)}
= \inf \left\{\norm{\mu}_{M(\Omega)} \;\big|\; \mu \in M(\Omega)\colon \int_\Omega \varphi(\cdot; \omega)\de\mu(\omega)= u \text{ on } D\right\}.
\]
This is a {\it reproducing kernel Banach space} (RKBS) by definition in \cite{bartolucci2023understanding}, and $\varphi : D \to C_0(\Omega)$ is the associated feature map. Equivalently,  \eqref{eq:sparse_min} and~\eqref{eq:empirical_sparse_min} can
be reformulated as
\begin{align}
\min_{u \in \cV(D)}  L(u) + \alpha \| u \|_{\cV} \quad \text{and} \quad \min_{u \in \cV(D)} \hat{L}_{K_1, K_2}(u) + \alpha \| u \|_{\cV} . 
\end{align}
These formulations identify the natural function space for shallow neural networks, where regularization is imposed through the Banach norm. 

The space $\cV$ is also referred to as an integral RKBS in the literature \citep{spek2025duality}, and is closely related to variation spaces and Barron spaces \citep{e2022barron,parhi2021banach,parhi2025function,siegel2023characterization}.
\revision{For the Gaussian feature function~\eqref{eq:scaledGaussian} with scaling \(s\), the associated RKBS is closely related to the classical Besov space \(B^s_{1,1}\); see \Cref{subsec:neural_net}. A precise relationship between the RKBS, the variation space, and the Besov space for general radial basis kernels is established in \cite{shao2026sparse}.}
For related studies on RKBS-based learning, we refer the readers to \cite{kumar2024mirror,parhi2022near,parhi2025function} and the references therein.

%\xt{References added}

Based on the empirical problem \eqref{eq:emperical_sparse_min_discrete}, we propose an efficient numerical framework for solving nonlinear PDEs in the RKBS. The main contributions of our work are summarized as follows.

\begin{itemize}[left=0pt, label={\tiny$\bullet$}]
\item {\it An adaptive extension of kernel collocation methods}: The proposed scheme can be interpreted as an adaptive collocation solver, extending classical RBF-based methods while resolving key issues such as scale selection and large system ill-conditioning.
\item {\it A framework integrating ideas from PINNs and GP methods}:  Our approach can be viewed as a PINN-type training scheme using a shallow, sparse neural network for improved optimization and an extension of the GP mechanism with adaptive kernel parameter selection. 
\item {\it A representer theorem}: We prove a representer theorem showing that the solution to the \revision{continuous empirical problem \eqref{eq:empirical_sparse_min} admits a finite linear combination of features; as a result, the problem is practically reduced to the discrete empirical problem  \eqref{eq:emperical_sparse_min_discrete}.}  
\item {\it Provable convergence guarantees}:
\revision{We provide a rigorous theoretical analysis of the convergence of the solutions of the continuous empirical problem \eqref{eq:empirical_sparse_min} to the solution of the PDE in the limit $K\to\infty$ and $\alpha\to0$.}
%We present a rigorous theoretical analysis establishing error bounds for the proposed method, offering a foundation that suggests new directions for extending traditional numerical analysis frameworks. 
\item {\it An efficient computational framework}: We design a three-phase algorithm that maintains a compact network structure through adaptive neuron insertion (guided by dual variables), efficient optimization via a second-order Gauss-Newton method, and pruning of inactive neurons.
\item {\it Demonstrated numerical advantages}: Through numerical examples and supported discussions, we illustrate the advantages of our method compared to GP and PINN approaches.
\item {\it Broad applicability}: Although developed in the context of RBF networks, the proposed framework is compatible with general shallow neural networks and other activation functions.
\end{itemize}

To illustrate the effectiveness of our approach, we present a simple 1D example comparing our results with the results obtained with the GP method from~\citet{chen2021solving} ~\citep[using the code in the associated repository;][]{chen2024nonlinpdes}.
\Cref{fig:intro_sols} demonstrates that our method accurately recovers the solution using a small number of adaptively chosen kernels. Here, the stars indicate the RBF center and shape parameter, with larger stars corresponding to larger RBF bandwidths. In contrast, \Cref{fig:intro_GP_linesearch,fig:intro_GP_sols} show that the GP method is highly sensitive to the kernel shape parameter. For this example, accurate recovery using GP is only possible within a very narrow range of $\sigma$ and deviations lead to significant errors. 

\begin{figure}[t]
    \centering
    % First row (3 figures)
    \begin{subfigure}[t]{0.32\textwidth}
        \includegraphics[width=\textwidth]{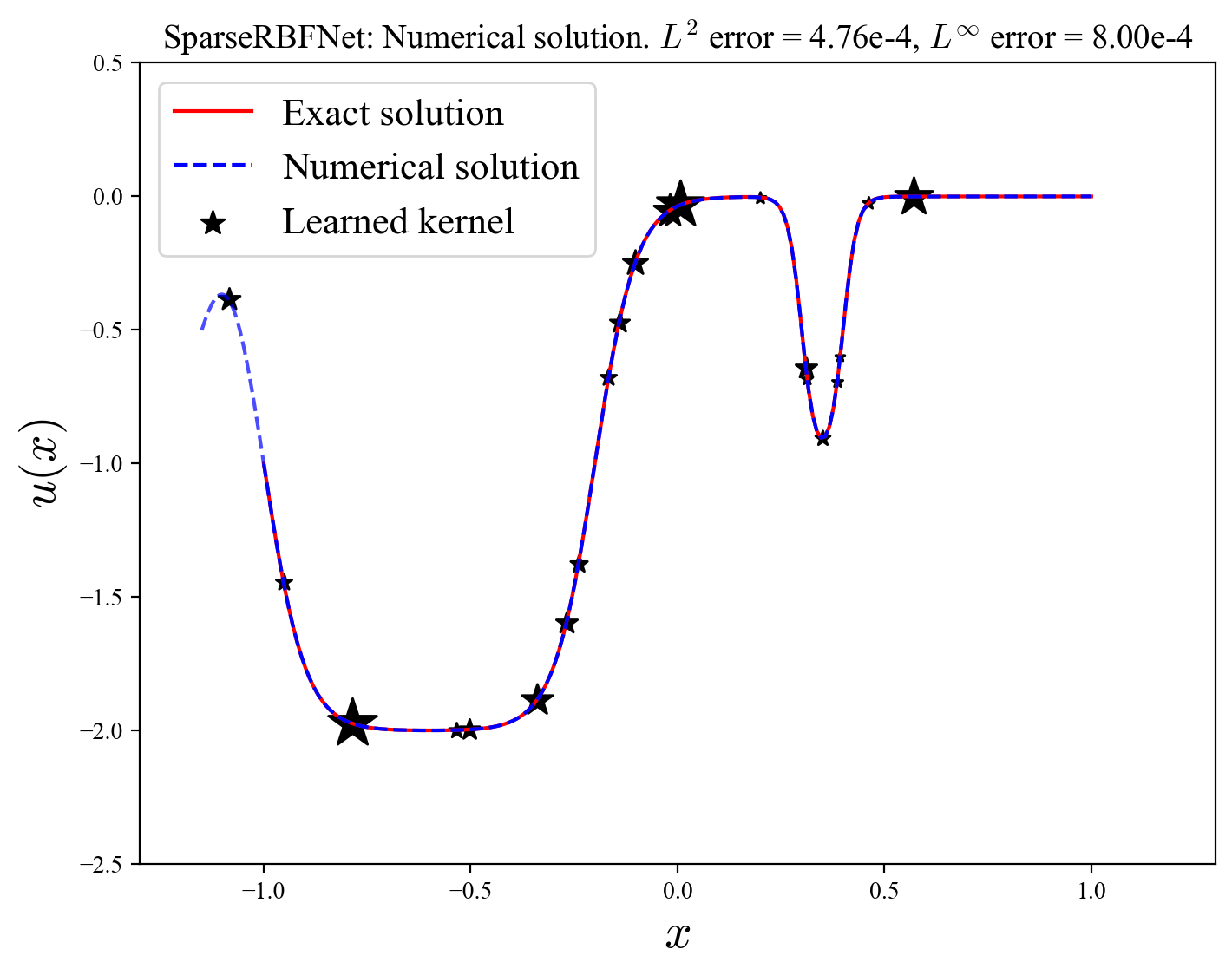}
        % \caption{Sparse RBFNet solution with $\alpha = 10^{-4}$, $\lambda = 100$.}
        \caption{}
        \label{fig:intro_sols}
    \end{subfigure}
    \hfill
    \begin{subfigure}[t]{0.32\textwidth}
        \includegraphics[width=\textwidth]{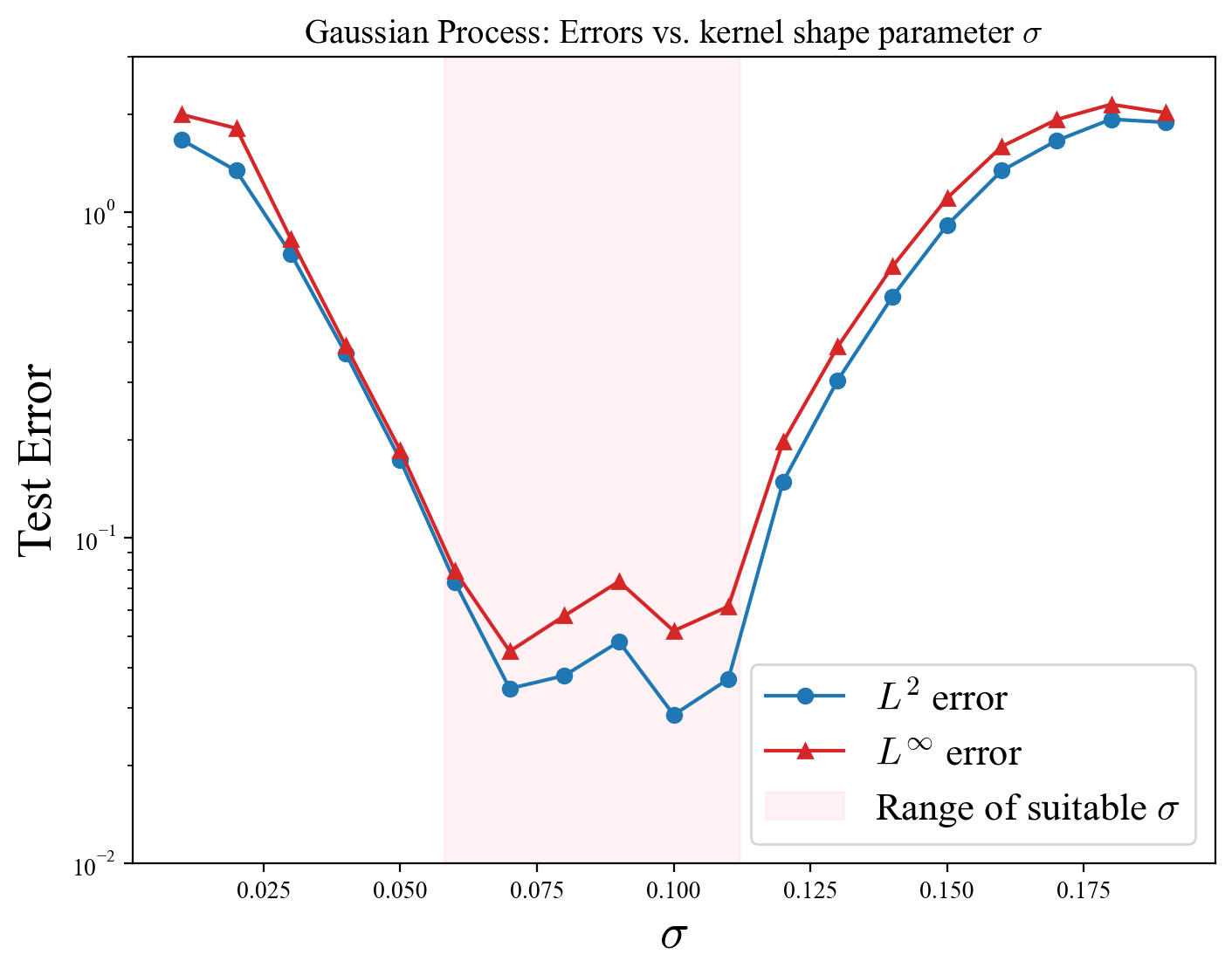}
        % \caption{Error curves for GP solutions with different kernel shape parameters.}
        \caption{}
        \label{fig:intro_GP_linesearch}
    \end{subfigure}
    \hfill
    \begin{subfigure}[t]{0.32\textwidth}
        \includegraphics[width=\textwidth]{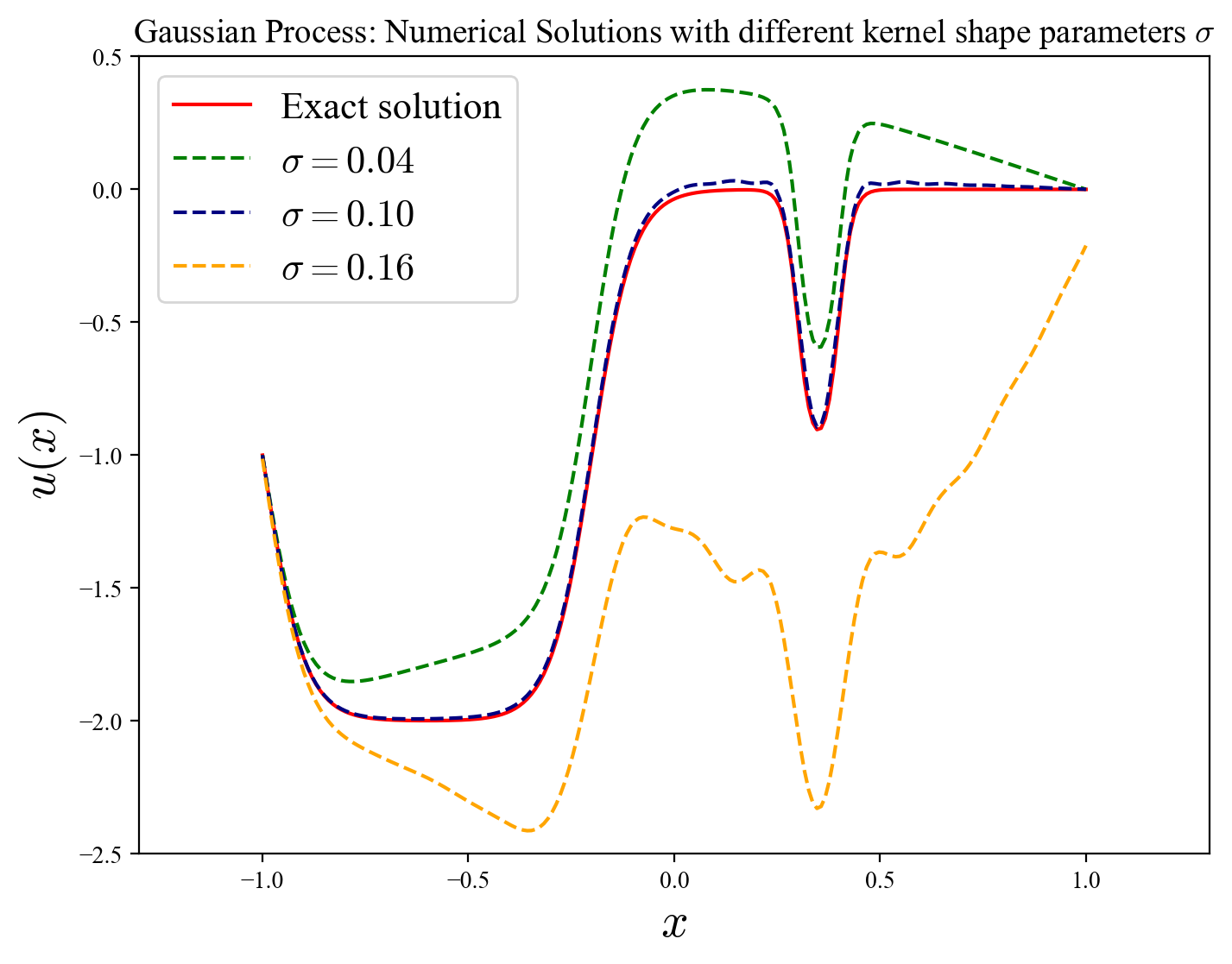}
        % \caption{GP solutions with different kernel shape parameters.}
        \caption{}
        \label{fig:intro_GP_sols}
    \end{subfigure}
    \caption{Numerical experiments on a 1D semilinear Poisson equation with Sparse RBFNet (our) and Gaussian Process. The exact solution $u$ is set to be 
    $u(x) = \tanh(10(x + 0.2)) - \tanh(10(x + 1.0)) + 0.5 \left( \tanh(30(x - 0.4)) - \tanh(30(x - 0.3)) \right)$. $K = 100$ grid collocation points are used for both methods; $L^{2}$ and $L^{\infty}$ error shown above are estimated at test grid of size $200$. (a) Sparse RBFNet solution with \revision{$\ell^{1}$-regularization parameter} $\alpha = 10^{-4}$, \revision{penalty weight} $\lambda = 100$; (b) Error curves for GP solutions with different kernel shape parameters; (c) GP solutions with different kernel shape parameters.}
    \label{fig:intro_semilinear}
\end{figure}

\subsection{Related studies}
\label{subsec:1.2}
Let us briefly mention some related methods and their conceptual connection to the proposed approach.
\subsubsection{The Hilbert space setting}
Given a probability measure $\rho$ over $\Omega$, one can define a space
\[
\cH(D) = \left\{ \int_{\Omega} \varphi(x ; \omega) c(\omega) \de\rho(\omega) \;|\; c \in L^2(\Omega, \de\rho)\right\}
\]
equipped with norm
\[
\norm{u}_{\cH} =  \inf \left\{\norm{c}_{L^2(\Omega, \de\rho)} \;|\; c \in L^2(\Omega, \de\rho)\colon \int_{\Omega} \varphi(\cdot ; \omega) c(\omega) \de\rho(\omega)  = u \text{ on } D \right\}.
\]
Given $u\in \cH$, the above infimum \revision{is attained by a unique minimizer 
$c_u \in L^2(\Omega, \de\rho)$} using the direct method in the calculus of variations. 
$\cH$ is a Hilbert space  equipped with inner product
\[
\pair{u, v}_{\cH}  := \pair{c_u, c_v}_{L^2(\Omega, \de\rho)}. 
\]
We also observe that $\cH$ is a reproducing kernel Hilbert space (RKHS) associated with the kernel \citep{bach:2017}
\beq
\label{eq:kernel}
k(x,x') = \int_{\Omega} \varphi(x; \omega) \varphi(x';\omega) \de\rho(\omega).
\eeq
This can be verified easily as follows.
For a fixed $x'\in D$, we have $k(\cdot, x') \in \cH$. If $\varphi(x', \cdot) \in L^2(\Omega,\de\rho) $ achieves $\norm{k(\cdot, x')}_{\cH} = \norm{\varphi(x', \cdot)}_{L^2(\Omega,\de\rho)}$,  then it follows that 
\[
\pair{u, k(\cdot, x)}_{\cH} = \pair{c_u, \varphi(x, \cdot) }_{L^2(\Omega, d\rho)} = \int_{\Omega} c_u(\omega) \varphi(x; \omega) \de\rho(\omega) = u(x),
\]
which verifies the reproducing property of the kernel.
In this Hilbert space framework, the minimization problem 
\beq
\label{eq:empirical_l2_min}
\min_{c \in L^2(\Omega,\de\rho)} \hat{L}_{K_1, K_2}(u_{c\de\rho}) + \frac{\alpha}{2} \norm{c}^2_{L^2(\Omega, \de\rho)}
\eeq
is equivalent to the minimization problem in the RKHS space:
\beq
\label{eq:empirical_RKHS_min}
\min_{u\in \cH} \hat{L}_{K_1, K_2}(u) + \frac{\alpha}{2} \norm{u}^2_{\cH}.
\eeq
To compare with our approach, we observe that
\[
\left(\int_{\Omega} |c(\omega)| \de\rho(w)\right)^2 \leq \int_{\Omega} |c(\omega)|^2 \de\rho(w),
\]
which naturally implies that $\cH \hookrightarrow \cV$. 
This embedding is in fact strict, and \citet{spek2025duality} show that $\cV$ coincides with the union of all such RKHS as $\rho$ ranges over the set of probability measures.
% \kp{
% The representer thorem in this case should show that the optimal solution can be written as
% \[
% u(x) = \sum_{i}\sum_{k} d_{i,k} \int_{\Omega} \varphi(x;\omega) \widetilde{\varphi}_{k,i}(\omega) \de \rho(\omega),
% \]
% see~\cite[``kernel vector field'']{batlle2025error})
% }

\subsubsection{Random feature models}
Random feature models offer a simpler alternative to neural network training. Instead of being trained, the internal weights in these models are 
sampled from a given probability distribution $\rho$, typically uniform or Gaussian, defined over the parameter space $\Omega$.
In a random feature model, one seeks a function expressed as
\[ 
u(x) = \sum_{n=1}^N c_n \varphi(x; \omega_n) , \quad  \{ \omega_n\} \sim \rho.
\]
Here, the inner weights $\{ \omega_n\}$ are sampled from $\rho$, and only the outer weights $\{ c_n\}$ are solved. 
Consequently, \Cref{eq:empirical_l2_min} becomes
\[
\min_{\{c_n \}_{n=1}^N} \hat{L}_{K_1, K_2}(u_{\{ c_n, w_n\}}) + \frac{\alpha N}{2} \sum_{n=1}^N |c_n|^2 . 
\]
This essentially corresponds to applying the kernel ridge regression method to solve a PDE, where the kernel is given by
\[
\hat{k} (x, x') = \frac{1}{N} \sum_{n=1}^N \varphi(x; \omega_n) \varphi(x';\omega_n)
\]
that approximates $k$ in \Cref{eq:kernel}. 
For a linear PDE problem, this leads to solving a linear system, which avoids the neural network training cost.
However, the random feature model does not adapt to the solution landscape and may require a large number of features. 
This can lead to prohibitive computational costs, especially in high-dimensional settings. For related discussions, see~\cite{RahimiRecht:08,zhang2023transnet,bach2023relationship,pieper2024nonuniform}. 
% \kp{added more references}

\subsubsection{The Gaussian process method} 
There has been a recent surge in using the Gaussian process (GP) method for solving PDEs \citep{batlle2025error,chen2021solving,chen2025sparse}.
The GP method proposed by \cite{chen2021solving} employs a Gaussian kernel with a fixed shape parameter and randomly chosen collocation points. Solving~\eqref{eq:empirical_RKHS_min} in the limiting case \(\alpha \to 0\) yields a variant of this GP method with the covariance kernel given by~\eqref{eq:kernel}, where the solution can be interpreted as a maximum a posteriori estimate under a GP prior, as shown in~\cite{chen2021solving}.
% \kp{Solving~\eqref{eq:empirical_RKHS_min} in the limiting case \(\alpha \to 0\) yields a variant of this GP method 
% with the covariance kernel given by~\eqref{eq:kernel}, since collocation conditions are combined with a regularization term related to a maximum likelihood estimate with respect to a GP prior, as in~\cite{chen2021solving}.}

However, the kernel~\eqref{eq:kernel} is only indirectly related to the feature function, and also depends on the density \(\de\rho\), we cannot directly compare the kernel used for the construction of the feature function and the covariance kernel resulting from this method. However, if we consider a density \(\de \rho\) that is concentrated on a single RBF shape parameter, but otherwise uniform over the kernel centers, we have a direct relation:
Consider the case where $\omega = y \in B_R \subset \R^d$ is in a ball of large radius \(R \gg 0\), $\sigma>0$ is fixed, and $\rho$ is a uniform probability measure on $B_R$. Then \Cref{eq:kernel} becomes
\[
\begin{split}
  k(x,x')
  &= \frac{\sigma^{2s}}{|B_R|} \int_{B_R} \sigma^{-2d} \hat{\varphi}((x-y)/\sigma) \hat{\varphi}((x'-y)/\sigma) \de y \\
  &\approx \frac{\sigma^{2s}}{|B_R|}  \int_{\R^d} \sigma^{-2d} \hat{\varphi}((x-y)/\sigma) \hat{\varphi}((x'-y)/\sigma) \de y  \\
  &=  \frac{\sigma^{2s}}{(\sqrt{2}\sigma)^d |B_R|}\hat{\varphi}\left(\frac{x-x'}{\sqrt{2}\sigma}\right)
\end{split}
\]
where $\hat{\varphi}$ denotes the standard Gaussian function in $\R^d$.  
This yields a Gaussian kernel with fixed scale parameter \(\sqrt{2}\sigma\).

\subsubsection{PINNs and other machine learning method for PDEs}

Our method is closely related to PINNs \citep{raissi2019physics,Shin_2023,LUO:2024,bonito2025convergenceerrorcontrolconsistent}, which also solve PDEs by minimizing a residual loss over a neural network ansatz. While these works provide convergence results for linear PDEs, they typically rely on large, fixed \revision{size} neural networks without incorporating sparsity or adaptivity, which are central to our approach.
Adaptive approaches are developed by \cite{cai2022self,liu2022adaptive} for linear PDEs using residual-based neuron enhancement, while our work differs by focusing on nonlinear PDEs and promoting sparsity through RKBS-based regularization.
% \zs{While convergence guarantees can be established for linear PDEs, these results typically rely on deep and over-parameterized networks, as is commonly used in numerical implementations. Also, generalization bound can be proven for two-layer neural networks coupled with Barron-type function spaces and path norm regularization.}
Radial basis function network based approaches related to PINNs have been developed by~\cite{ramabathiran2021spinn,wang2023solving, bai2023physics}.
  In particular, similar to our framework,~\cite{wang2023solving} propose a shallow RBF network trained with a sparsity-promoting $\ell_{1}$ regularized residual loss to obtain compact representation and better interpretability.
  However, a scaling of the feature functions in~\eqref{eq:discrete_network} that allows for stable computation of second derivatives for arbitrarily small length scale \(\sigma > 0\) has not been employed.
  This is a prerequisite for the analysis in terms of RKBS provided in our work, which enables theoretical guarantees in terms of generalization (error estimates), and adaptive training methods that do not require a large initial choice of the network width.

A key difference of our method compared to other approaches using (regularized) RBF networks is the optimization strategy. Instead of updating all parameters using gradient-based method, we employ a greedy, boosting-like approach that adaptively selects neurons while maintaining a simple network structure. To further enhance convergence and promote sparsity, we use a second-order optimization scheme. This helps avoid redundant neurons, which are often seen in first-order methods.
Although our theory is developed for Gaussian RBF, the approach still applies when we replace RBF with other activation functions commonly used in PINNs. 
Other related approaches, such as variational or weak formulation of loss functions \citep{yu2018deep,zang2020weak}, may be explored in the future.

It has been widely observed that the performance of PINNs is highly sensitive to the choice of penalty parameter $\lambda$ for enforcing boundary conditions~\citep{wang2021understanding, wang2022and}. As our method employs a similar loss function, it inherits this sensitivity as well. This motivates alternative treatment of boundary conditions, which will be discussed later in \Cref{subsec:bnd_treat} and \ref{subsec:num_bnd_treat}.
Related to this, a linear elliptic problem is studied by \citet{bonito2025convergenceerrorcontrolconsistent} for a class of collocation methods including PINNs formulations. Using the theory of optimal recovery and regularity theory in Besov scales, an improved PINNs loss is provided that allows for better theoretical guarantees when minimizing the loss over a (neural network) model class that approximates functions in Besov spaces. In our work, we directly regularize in a variation norm, which is related to a Besov norm, see Section~\ref{subsec:neural_net}, and we make similar observations on the practical and theoretical deficiency of the standard PINNs loss.

\revision{
\subsubsection{Adaptive methods and greedy algorithms}
\label{subsec:adaptive_methods}

Our method is closely related to a growing class of approaches that revisit shallow architectures through adaptive basis construction, including boosting-type procedures~\citep{friedman2001greedy,bengio2005convex}, Frank–Wolfe variants~\citep{bach:2017,LinearGCG:2024}, and greedy algorithms~\citep{needell2009cosamp,klusowski2018approximation,siegel2023greedy}. In these approaches, the network is constructed incrementally by repeatedly solving a nonconvex subproblem over the inner weight set to select and insert new units.
Our method distinguishes itself in several important aspects. First, in contrast to pure greedy type methods we include an $\ell_1$ sparsity-promoting regularization, which allows for pruning of unneeded features, promotes compact representations and mitigates overfitting; see \Cref{sec:experiments}.
In general, for convex loss functions, classical greedy or Franke--Wolfe style methods converge at a slow sub-linear rate, where the optimization error is proportional to one over the number of insertions.
Better resolution of the convex subproblems arising in those strategies results in provably linear convergence~\citep{flinth2019linear,PieperWalter:2021,LinearGCG:2024}. For this reason, we incorporate a semismooth Newton weight optimization strategy~\citep{Ulbrich:2011,ouyang2025trust} with inexact Gauss-Newton Hessian into the algorithm, enabling faster convergence once an appropriate parameter set has been identified.
Second, we combine the inner and outer weights in a combined update, similar to over-parametrized gradient based training methods~\citep{chizat2018global,chizat2022sparse} to mitigate the weight clustering observed in accelerated Frank--Wolfe methods that do not move the inner weights. However, in contrast to those methods the induced sparsity throughout the iterations enables joint optimization of both inner and outer weights with inexact second order updates.
Finally, rather than relying on an exact greedy selection step for the parameter insertion, which typically entails an expensive global optimization, we employ a relaxed randomized and computationally tractable strategy for updating the inner parameters.
We outline the proposed algorithm in \Cref{sec:algorithm}, but consider a detailed convergence analysis out of scope for the current work.
}

\subsection{Notation, assumptions and examples}
\label{subsec:1.3}
For the PDE problem defined by \eqref{eq:main} and \eqref{eq:PDEform}, we impose the following assumption throughout this paper.
\begin{assumption}
\label{assu:caratheodory}
Assume that the functions $E$ and $B$ satisfy the Carath{\'e}odory conditions:
\begin{align*}
(u,g,H) &\mapsto E(x, u, g, H) &&\text{is continuous for almost all } x \in D, \\
(u,g) &\mapsto B(x, u, g) &&\text{is continuous for almost all } x \in \partial D, \\
x &\mapsto E(x, u, g, H) &&\text{is bounded and $\nu_D$-measurable for all } (u,g,H) \in \R\times \R^{d} \times \R^{d^2}, \\
x &\mapsto B(x, u, g) &&\text{is bounded and $\nu_{\partial D}$-measurable for all } (u,g) \in \R\times \R^{d}.
\end{align*}
\end{assumption}

The assumptions stated above are sufficient to ensure the existence of minimizers for the continuous optimization problems, which will be established in \Cref{subsec:neural_net}.
We note, however, that these assumptions do not imply the existence of solutions to the PDE \eqref{eq:main}, which has to be argued for separately.
To analyze the convergence of our method, we require the PDEs in consideration to be well-posed.
To formalize this, let $\cU_o$, $\cU$, $\cF_o$ $\cF$ be Banach spaces in which smooth functions form a dense subset, with continuous embeddings $\cV\hookrightarrow\cU_o\hookrightarrow\cU$ and $\cF_o\hookrightarrow\cF$.  We define the combined PDE and boundary operator $\cR[u] := (\cE[u], \cB[u])$ and regard $\cR$ as a mapping from $\cU$ to $\cF$, with its restriction acting from $\cU_o$ to $\cF_o$:
\[
\cR: \cU \to \cF, \quad \cR|_{\cU_o} : \cU_o \to \cF_o.
\]
We think of $\cU_o$ and  $\cU$ as Sobolev spaces defined over the domain $D$ where the solution is sought.
Typically, $\cU$ represents the minimal regularity required for a weak solution, such as $ H^1(D)$ for second-order elliptic problems.  
The more regular space $\cU_o$, such as $H^2(D)$, may result from classical regularity theory under more regular data or boundary conditions.
The spaces $\cF_o$ and $\cF$ are given as direct sums of spaces on $D$ and $\partial D$. For example, $\cF$ may equal $ H^{-1}(D) \times H^{-1/2}(\partial D) $, combining the duals of $H^1(D)$ and its trace space. Similarly, $\cF_o$ may be a more regular variant, such as $L^2(D) \times L^2(\partial D)$, into which $\cR_{\cU_o}$ maps. 
We now reformulate  \eqref{eq:main} as the problem of finding $u \in \cU_o$ such that  
\beq
\label{eq:combinedPDE}
\cR [u] = 0 \quad \text{in } \cF_o. 
\eeq
We assume $\cF_o$ is chosen so that the zero element corresponds to functions that vanish almost everywhere on $D$.  We state the assumptions on well-posedness as follows.

\begin{assumption}[Well-posedness of the PDE]
\label{assu:wellposedness}
We assume the following:
\begin{enumerate}
\item Existence and uniqueness. There exists a unique solution $u \in \cU_o $ to the problem \eqref{eq:combinedPDE};
\item Continuity and boundedness. The maps $\cR$ and  $\cR_{\cU_o}$ are continuous around $u$ and satisfy a growth bound; that is, there exists $C>0$ such that
\begin{align*}
& \| \cR[v] \|_{\cF} = \|  \cR[u] - \cR[v] \|_{\cF} \leq C \| u - v\|_{\cU} \text{ and } \| \cR[v] \|_{\cF} \leq C \max(1, \| v \|_{\cU})  \quad \forall v \in \cU\\
& \| \cR[v] \|_{\cF_o} = \|  \cR[u] - \cR[v] \|_{\cF_o} \leq C \| u - v\|_{\cU_o} \text{ and } \| \cR[v] \|_{\cF_o} \leq C \max(1 , \| v \|_{\cU_o})  \quad \forall v \in \cU_o.
\end{align*}
 \item Stability of the solution. There exists $C>0$ such that
 \[
 \|  u - v\|_{\cU} \leq C  \|  \cR[u] - \cR[v] \|_{\cF} =  C  \|  \cR[v] \|_{\cF}  \quad \forall v\in \cU.
 \]
\end{enumerate}
\end{assumption}

Such well-posedness properties are well established in the classical theory for several important classes of PDEs; %, including linear elliptic and parabolic equations, semilinear equations with Lipschitz nonlinearities, and scalar conservation laws with entropy solutions
see, for example, \cite{brezis2011functional,caffarelli1995fully,dafermos2005hyperbolic,evans2022partial,gilbarg1977elliptic}. In general, we expect that $\cV \subsetneq  \cU_o $. However, a key advantage is that $\cV$ is dense in $\cU_o$. This property follows from the fact that the Gaussian kernel is a ``universal kernel'', meaning that finite linear combinations of its shifts can approximate any continuous function on a compact set arbitrarily well \citep{micchelli2006universal}. 
\Cref{assu:wellposedness} together with the density property allows our method to approximate solutions to \eqref{eq:main}. The corresponding analysis is presented in \Cref{subsec:convergence}.

Since many PDEs only depend on a subset of linear partial differential operators, we also introduce two linear operators
\begin{equation}
\label{eq:linearoperators}
\begin{aligned}
[\cL_E u](x) &= [c_i(x)u(x) + b_i(x)\cdot \nabla u(x) + \tr [A_i(x) \nabla^2 u(x)]_{i=1,\ldots,N_E} \\
[\cL_B u](x) &= [d_i(x)u(x) + e_i(x)\cdot \nabla u(x)]_{i=1,\ldots,N_B}
\end{aligned}
\end{equation}
with the property that
\begin{equation}
\label{eq:PDEoperatorform}
\cE[u] (x) = 
\hat{E}(x,[\cL_Eu](x)),\quad
\cB[u] (x) = 
\hat{B}(x,[\cL_Bu](x))
\end{equation}
for some nonlinear functions \(\hat{E} \colon \overline{D} \times \R^{N_E} \to \R\) and
\(\hat{B} \colon \partial D \times \R^{N_B} \to \R\).
This decomposition can be used to simplify the notation, improve the theoretical estimates below, and speed up some algorithms. 
However, in general, the linear operators could be set to simply evaluate all partial derivatives together with \(\hat{E} = E\) and \(\hat{B} = B\).
To formulate the first order optimality conditions of the optimization problems in \Cref{subsec:optimality}, we impose the following assumptions. 

\begin{assumption}
\label{assu:firstorderdiff}
\revision{Let \Cref{assu:caratheodory} hold for the residual functions \(\hat{E}\) and \(\hat{B}\) and let the coefficient functions $\{ c_i(x), b_i(x), A_i(x)\}_{i=1,\ldots,N_E}, \{ d_i(x),e_i(x) \}_{i=1,\ldots,N_B}$ be measurable and uniformly bounded.} 
In addition, the functions \( (x, l_E) \mapsto \hat{E}(x, l_E) \) and \( (x, l_B) \mapsto \hat{B}(x, l_B) \) are differentiable in their second arguments, with gradients uniformly bounded in \( x \in \overline{D} \) and \( x \in \partial D \), respectively. 
\end{assumption}

\noindent{\bf Example}
Using the notation introduced above, the semilinear Poisson equation with Dirichlet boundary conditions
\begin{equation*}
- \upDelta u + u^3 = f_{\mathcal{E}} \quad \text{in } D, \qquad
u = f_{\mathcal{B}} \quad \text{on } \partial D.
\end{equation*}
can be expressed as
\beq
\label{eq:semilinear}
\cE[u] = -\upDelta u + u^3 - f_{\cE},
\quad \cB[u] = u - f_{\cB},
\eeq
where we define \(E(x,u,g,H) = -\tr H + u^3 - f_{\cE}(x)\) and \(B(x,u,g) = u -  f_{\cB}(x)\). For the second description, we define the linear operators
\begin{align*}
\cL_E[u] &= [u, \upDelta u], \\
\cL_B[u] &= [u],
\end{align*}
together with \(\hat{E}(x,l_1,l_2) = -l_2 + l_1^3 - f_{\cE}(x)\) and \(\hat{B}(x,l_1) = l_1 - f_{\cB}(x)\).\\

%\zs{I want to change this to Dirichlet boundary  since that's what I'll use for the first proof-of-idea numerical example (Maybe combine example 1 and 3). The "mask" technique is working crazily well but that can only be applied to homogeneous Dirichlet BVP.}
%\kp{It is OK to change to Dirichlet, I have done so. The mask technique should apply to anay Dirichlet data \(f\) that can be extended to the whole domain \(f \colon \overline{D} \to \R\), by setting \(u(x) = f(x) + \psi(x)\cN(x)\), where \(\psi\) is the homogeneous Dirichlet mask function.}

% \noindent{\bf Example 3.}
% Space-time Burger's: we define the space time variable \(x = (t, \hat{x}) \in (0,T)\times(0,1) = D\) and
% \[
% \partial_t u + u \partial_{\hat{x}} u - \nu \partial^2_{\hat{x}} u = 0 \text{ on } D,
% \quad
% u = 0 \text{ on } (0,T)\times \{0,1\}
% \quad
% u = u_0 \text{ on } \{0\}\times (0,1).
% \]
% Here, we define 
% we define \(E(x,u,g,H) = g_1 + u g_2 - \nu h_{2,2}\) and \(B(x,u,g) = \mathbf{1}_{\{0,1\}}(\hat{x}) u + \mathbf{1}_{\{0\}}(t) (u - u_0(\hat{x}))\).
% For the linear operators, we can choose
% \begin{align*}
% \cL_E[u] &= [u, \partial_{\hat{x}} u, \partial_t u - \nu\partial_{\hat{x}}^2 u], \\
% \cL_B[u] &= [u],
% \end{align*}
% together with \(\hat{E}(x,l_1,l_2,l_3) = l_3 + l_1 l_2\) and \(\hat{B}(x,l_1) = \mathbf{1}_{\{0,1\}}(\hat{x}) l_1 + \mathbf{1}_{\{0\}}(t) (l_1 - u_0(\hat{x}))\).

\subsubsection{Notation summary} 
Throughout the paper, $\R$ denotes the set of real numbers, $\R_+$ the set of positive real numbers, $\N$ the set of positive integers, and $\N_0$ the set of non-negative integers.
For $m\in\N$,  $\R^m$ denotes the $m$-dimensional Euclidean space. $\Omega$ denotes the parameter domain for the integral neural network, while $D\subset \R^d$ is a bounded open domain where the PDE is defined.

We summarize the main notation below.
\begin{itemize}[left=0pt, label={\textbf{--}}]
    
    \item Function spaces on a set \( A \subset \mathbb{R}^m \)
    
    \begin{itemize}[left=1em, label={--}]
        \item \(C(A)\): Continuous functions on \(A\).
        \item \(C(\overline{A})\): Continuous functions on the closure \(\overline{A}\), with the supremum norm.
        \item \(C_0(A)\): Functions in \(C(\overline{A})\) that vanish on \(\overline{A} \setminus A\). 
        %Continuous functions vanishing at \(\overline{\Omega} \setminus \Omega\), i.e., for any \(\varepsilon > 0\), there exists compact \(K \Subset \Omega\) such that \(|f(x)| < \varepsilon\) for all \(x \notin K\). Equivalently, \(C_0(\Omega)\) consists of 
        \item \(C^k(\overline{A})\): Functions on \(\overline{A}\) with continuous derivatives up to order \(k\).
        \item \(C^{k,\gamma}(A)\): H\"{o}lder space of functions with \(k\) continuous derivatives whose \(k\)th derivatives are H\"{o}lder continuous with exponent \(\gamma \in (0,1]\).
        \item $H^s(A)=W^{s,2}(A)$: Sobolev space  with smoothness index $s$ and integrability index $2$. 
        \item $B^s_{p,q}(A)$: Besov space with smoothness index $s$, integrability $p$, and summability $q$.
    \end{itemize}
    
          \item \( M(\Omega) \): Space of signed Radon measures on \( \Omega \), equipped with the total variation norm. 
    Unless otherwise stated, \(M(\Omega)\) is endowed with the weak-$\ast$ topology induced by its duality with \(C_0(\Omega)\). 
    
      \item \( \mathcal{N} \mu(x) = u_\mu(x) \): Integral neural network defined by \( u_\mu(x) = \int_\Omega \varphi(x; \omega) \, d\mu(\omega)\), where $\varphi(x; \omega)$ is a Gaussian RBF with parameter $\omega\in \Omega$. 

   % \item \( \varphi(x; \omega) \): Gaussian radial basis function with scale parameter \( \sigma \), given by
    %\[
    %\varphi(x; \omega) = \frac{\sigma^s}{( \sqrt{2\pi} \sigma )^d} \exp\left( -\frac{\|x - y\|_2^2}{2 \sigma^2} \right), \quad \omega = (y, \sigma).
    %\]
    \item For two norms spaces $X$ and $Y$, $X\hookrightarrow Y$ denote a continuous embedding of $X$ into Y, i.e., $\| x\|_{Y} \leq C \| x\|_X$ for all $x\in X$. 
        \item \( \cF_o\hookrightarrow\cF,  \cU_o\hookrightarrow\cU \): Banach spaces of admissible input data and PDE solutions, respectively, each containing smooth functions as a dense subset. 
        \item \( \cR\): The combined PDE and boundary operator $(\cE, \cB) : \cU \to \cF$, with restriction $\cR|_{\cU_o}: \cU_o \to \cF_o$. 
  \item \( \mathcal{V}(D) \): Reproducing Kernel Banach Space (RKBS) of functions represented as \( \mathcal{N} \mu \) with \( \mu \in M(\Omega) \). \( \mathcal{V}(D)\) is a dense subset of  \(\mathcal{U}_o\) and $\cU$.
    % \item \( \mathcal{H}(D) \): RKHS associated with a positive-definite kernel on \( D \).
    \item \( \partial^\beta \): Multi-index partial derivative with $ \beta = (\beta_1, \dots, \beta_d) \in \N_0^d$.   
   % For \( \beta = (\beta_1, \dots, \beta_d) \in \mathbb{N}^d \),
    %\[
    %\partial^\beta = \frac{\partial^{|\beta|}}{\partial x_1^{\beta_1} \cdots \partial x_d^{\beta_d}}, \quad |\beta| = \sum_{i=1}^d \beta_i.
    %\]
    \end{itemize}

We also introduce some terminology used throughout the paper. By analogy with collocation methods, we refer to the set of quadrature points $\{x_{1,k}\} \cup \{x_{2,k}\} $, which define the empirical loss, as \emph{collocation points}, and denote their total number by $ K $. Note, however, that the PDE is not enforced to be exactly satisfied at these points. The functions $ \{ \varphi(x; \omega_n) \}_{n=1}^N $ are called \emph{feature functions}, and the associated parameters \( \{ \omega_n \}_{n=1}^N \) are referred to as \emph{inner weights} or \emph{kernel nodes}, where $N$ is the total number of feature functions.

\begin{comment}
\kp{
To impose mixed boundary conditions, we can relax the requirement that \(B\) is continuous in \(x\).
The same is valid for \(E\), which may allow for discontinuous diffusion and advection coefficients.
However in such settings the strong from of the equation may be contradictory at points of discontinuity, and we may have to think carefully about the strong form.
Mixed BC:
\[
\mathbf{1}_{\Gamma_D} (u - g_D) +
\mathbf{1}_{\Gamma_N} \left( \frac{\partial u}{\partial n} - g_N \right) +
\mathbf{1}_{\Gamma_R} \left( \frac{\partial u}{\partial n} + \gamma u - g_R \right),
\]
where $\Gamma_D, \Gamma_N, \Gamma_R$ are disjoint subsets of $\partial D$ with $\Gamma_D\cup\Gamma_N\cup\Gamma_R = \partial D$.  $g_D$, $g_N$, $g_R$ are continuous functions.}
\end{comment}

\subsection{Outline of the paper}
\label{subsec:1.4}
The rest of the paper is organized as follows. \Cref{sec:theory} develops the theoretical foundation of our approach. We introduce the integral neural network representation and its associated function space, and establish an existence result for the sparse minimization problem. We also provide a convergence analysis for the method applied to solving PDEs, derive a representer theorem ensuring finite representation, and present the optimality conditions and dual variables that underlie the design of our optimization algorithm.
\Cref{sec:algorithm} turns to the algorithmic framework. We introduce a three-phase algorithm, which includes a gradient boosting strategy for inserting kernel nodes, a semi-smooth Gauss–Newton method for optimizing parameters, and a node deletion step to maintain a compact network size. Some additional implementation components are also discussed.
\Cref{sec:experiments} presents numerical experiments that validate the effectiveness of the method. Finally, \Cref{sec:conclusion} concludes with a discussion of future directions.

\section{Theoretical framework}
\label{sec:theory}
\revision{
We begin with a brief overview of the main theoretical results in this section, together with high-level proof sketches.

\begin{itemize}[leftmargin=1.2em]
    \item \textbf{Existence (\Cref{thm:existence}, \Cref{subsec:neural_net}).}
    Under \Cref{assu:caratheodory} and the continuity of the feature map established in \Cref{prop:smoothness_feature_function} and \Cref{lem:continuity}, the continuous sparse minimization problem~\eqref{eq:sparse_min} and its empirical counterpart~\eqref{eq:empirical_sparse_min} admit global minimizers. The proof follows the direct method in the calculus of variations, using  weak-\(\ast\) compactness in \(M(\Omega)\) and weak-\(\ast\) lower semicontinuity of the objective.

    \item \textbf{Convergence and error estimates (\Cref{thm:convergence} and \Cref{thm:error_bound}, \Cref{subsec:convergence}).}
    Assuming the quadrature measures converge weakly and remain uniformly bounded (\Cref{assu:measureconvergence}), and the data-space embedding/interpolation condition (\Cref{assu:embedding}), empirical minimizers \(u_{K,\alpha}\) (as \(K\to\infty\)) admit limit points \(u_\alpha\) that satisfy an explicit residual-based error estimate in \(\cU\). 
    %Moreover, \(u_{K,\alpha}\to u_\alpha\) (up to subsequences) in \(C^1(\overline D)\). 
    We further show how to choose \(K\) and \(\alpha\) jointly to obtain an explicit approximation bound for \(u_{K,\alpha}\) relative to the PDE solution.

    \item \textbf{Finite representation/Representer theorem  (\Cref{thm:representer}, \Cref{subsec:representertheorem}).}
    For the empirical sparse problem~\eqref{eq:empirical_sparse_min}, there exists an optimal solution supported on finitely many atoms. The argument reduces the nonlinear objective to a constrained minimum-norm problem in \(M(\Omega)\) and then applies Carath\'eodory-type arguments to obtain a finite representation of solutions.
\end{itemize}

}

\subsection{Integral neural networks and the associated function space}
\label{subsec:neural_net}
We begin by recalling and formalizing the definition of the integral neural network introduced in the introduction. Specifically, we consider functions represented in the form
\beq
u_\mu (x)= \cN \mu(x) := \int_\Omega \varphi(x;\omega)\de \mu(\omega),
\eeq
where $\varphi$ is a Gaussian RBF with variable bandwidth \(\sigma\)
\beq
    \varphi(x;\omega) = \frac{\sigma^s}{\left(\sqrt{2\pi}\sigma\right)^d} 
    \exp\left(- \frac{\norm{x - y}^2_2}{2 \sigma^2}\right)
    \quad
    \text{where } \omega = (y,\sigma) \in \R^{d+1} .
\eeq
Introduce the standard Gaussian function on $\R^d$
\[
\hat{\varphi}(x') = \frac{1}{\left(\sqrt{2\pi}\right)^d} 
    \exp\left(- \frac{\norm{x'}^2_2}{2}\right).
\]
Then the scaled Gaussian can be written as
\beq
\label{eq:scaledGaussian}
\varphi(x; \omega) = \sigma^{s-d} \hat{\varphi}\left(\frac{x-y}{\sigma} \right).
\eeq
To define the parameter space $\Omega$, fix a maximum bandwidth \(\sigma_{\max}>0\) and let $D_{1} :=  \overline{D + B_{\sigma_{\max}}(0)}$. We then set
\[
\Omega =  D_1 \times (0, \sigma_{\max}].
\]

We now define the function space associated with the integral neural networks. 
Specifically, we consider the set of functions on $D$ that can be written as $\cN \mu$ for some signed Radon measure $\mu$.
This leads to a reproducing kernel Banach space (RKBS) \citep{bartolucci2023understanding,lin2022reproducing,zhang2009reproducing}, defined by
\beq
\cV(D) = \left\{ \cN \mu: D \to \R \;|\; \mu \in M(\Omega) \right\}
\eeq
and equipped with the norm
\beq
\norm{u}_{\cV(D)}
= \inf \{\norm{\mu}_{M(\Omega)} \;|\; \mu \in M(\Omega)\colon \cN \mu = u \text{ on } D \}. 
\eeq
Here \(M(\Omega)\) is the space of signed Radon measures on $\Omega$ equipped with the total variation norm
\[
\| \mu \|_{M(\Omega)} = \sup \left\{  \int_\Omega f \de \mu \; \colon f \in C_0(\Omega),\ \|f\|_\infty \leq 1 \right\}.
\]
\(M(\Omega)\) is also identified with the the dual space of $C_0(\Omega)$. Unless otherwise stated, \(M(\Omega)\) is endowed with the weak-$\ast$ topology induced by its duality with \(C_0(\Omega)\).
%\[
%C_0(\Omega) = \left\{ f \in C(\Omega) \mid \forall \epsilon > 0, \exists K \Subset \Omega \text{ such that } |f(x)| < \epsilon \text{ for all } x \notin K \right\}. 
%\]
%Equivalently, $C_0(\Omega)$ is the subspace  of the space of bounded continuous functions endowed with the sup norm
%\[
%C(\overline{\Omega})
%= \{ v \in C(\Omega) \;|\; \norm{v}_{C(\overline{\Omega})} = \sup_{x\in \overline{\Omega}} \abs{v(x)} < \infty \}
%\]
%that are zero on $\partial \Omega = \overline{\Omega} \setminus \Omega = D_1 \times \{0\}$.

\begin{proposition}
\label{prop:smoothness_feature_function}
For \(s \geq d + k + \gamma\), with \(k\in \N\cup\{0\}\) and \(\gamma \in (0,1]\), we have $\partial_x^\beta \varphi(x;\cdot) \in C_0(\Omega)$ uniformly for $x\in \overline{D}$ for and multi-index \(\beta\) with \(\abs{\beta} \leq k\) and $\varphi(\cdot;\omega) \in C^{k,\gamma}(D_1)$ uniformly for $\omega \in \Omega$.
\end{proposition}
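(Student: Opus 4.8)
The plan is to reduce both assertions to elementary properties of the standard Gaussian $\hat\varphi$ by exploiting the scaling identity \eqref{eq:scaledGaussian}. Differentiating $\varphi(x;\omega)=\sigma^{s-d}\hat\varphi((x-y)/\sigma)$ in $x$ gives, for every multi-index $\beta$,
\[
\partial_x^\beta\varphi(x;\omega)=\sigma^{s-d-\abs{\beta}}\,(\partial^\beta\hat\varphi)\!\left(\tfrac{x-y}{\sigma}\right),\qquad \omega=(y,\sigma).
\]
Since $\hat\varphi$ is a Schwartz function, each $\partial^\beta\hat\varphi$ (a Hermite polynomial times a Gaussian) is bounded, say $M_\beta:=\norm{\partial^\beta\hat\varphi}_{L^\infty(\R^d)}$, and Lipschitz, say $L_\beta:=\norm{\nabla\partial^\beta\hat\varphi}_{L^\infty(\R^d)}$; splitting $\abs{a-b}=\abs{a-b}^\gamma\abs{a-b}^{1-\gamma}$ and taking minima shows $\partial^\beta\hat\varphi$ is \emph{globally} $\gamma$-Hölder with constant $C_\beta:=(2M_\beta)^{1-\gamma}L_\beta^{\gamma}$. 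The hypothesis enters only through the exponent: for $\abs{\beta}\le k$, the bound $s\ge d+k+\gamma$ yields $s-d-\abs{\beta}\ge k+\gamma-\abs{\beta}\ge\gamma>0$, equivalently $s-d-\abs{\beta}-\gamma\ge 0$. These three facts—boundedness, global Hölder continuity, and positivity of the relevant power of $\sigma$—drive everything.

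For the first assertion I would fix $x\in\overline D$ and view $\omega\mapsto\partial_x^\beta\varphi(x;\omega)$ on $\Omega=D_1\times(0,\sigma_{\max}]$. It is continuous there since each factor is continuous for $\sigma>0$. The bound $\abs{\partial_x^\beta\varphi(x;\omega)}\le M_\beta\sigma^{s-d-\abs{\beta}}$ with $s-d-\abs{\beta}>0$ shows $\partial_x^\beta\varphi(x;\cdot)\to 0$ as $\sigma\to 0^+$, uniformly in $y\in D_1$ and in $x\in\overline D$; because $D_1$ is compact we have $\overline\Omega\setminus\Omega=D_1\times\{0\}$, so this is precisely the statement that $\partial_x^\beta\varphi(x;\cdot)$ extends continuously by zero to $\overline\Omega\setminus\Omega$, i.e.\ lies in $C_0(\Omega)$. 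The same bound gives $\sup_{x\in\overline D}\norm{\partial_x^\beta\varphi(x;\cdot)}_{C_0(\Omega)}\le M_\beta\sigma_{\max}^{s-d-\abs{\beta}}<\infty$ — the uniformity in $x$ — and, if continuity of $x\mapsto\partial_x^\beta\varphi(x;\cdot)$ into $C_0(\Omega)$ is needed, the global Hölder bound on $\partial^\beta\hat\varphi$ plugged into the scaling identity yields $\norm{\partial_x^\beta\varphi(x;\cdot)-\partial_x^\beta\varphi(x';\cdot)}_{C_0(\Omega)}\le C_\beta\sigma_{\max}^{s-d-\abs{\beta}-\gamma}\abs{x-x'}^\gamma$.

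For the second assertion I would fix $\omega=(y,\sigma)\in\Omega$, so that $\varphi(\cdot;\omega)$ is $C^\infty$ on $\R^d$, in particular $C^k$ on $D_1$, with $\norm{\partial_x^\beta\varphi(\cdot;\omega)}_{L^\infty(D_1)}\le M_\beta\sigma^{s-d-\abs{\beta}}\le M_\beta\sigma_{\max}^{s-d-\abs{\beta}}$ for all $\abs{\beta}\le k$. For the top-order Hölder seminorm ($\abs{\beta}=k$) I would use the global $\gamma$-Hölder estimate on $\partial^\beta\hat\varphi$ inside the scaling identity:
\[
\bigl\lvert\partial_x^\beta\varphi(x;\omega)-\partial_x^\beta\varphi(x';\omega)\bigr\rvert
=\sigma^{s-d-k}\Bigl\lvert(\partial^\beta\hat\varphi)(\tfrac{x-y}{\sigma})-(\partial^\beta\hat\varphi)(\tfrac{x'-y}{\sigma})\Bigr\rvert
\le C_\beta\,\sigma^{s-d-k-\gamma}\,\abs{x-x'}^\gamma,
\]
and $\sigma^{s-d-k-\gamma}\le\sigma_{\max}^{s-d-k-\gamma}$ since the exponent is nonnegative. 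Summing over $\abs{\beta}\le k$ bounds $\norm{\varphi(\cdot;\omega)}_{C^{k,\gamma}(D_1)}$ by a constant depending only on $d,k,\gamma,s,\sigma_{\max}$ and independent of $\omega$, which is the asserted uniformity.

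There is no deep obstacle here — the argument is bookkeeping around the scaling identity — but two points deserve care. First, in the borderline case $\abs{\beta}=k$ with $s=d+k+\gamma$, the exponent $s-d-k-1=\gamma-1$ is typically negative, so a Lipschitz estimate for the difference of $\partial^\beta\hat\varphi$-values would create a factor $\sigma^{\gamma-1}$ that blows up as $\sigma\to 0^+$; this is exactly why one must invoke that a bounded Lipschitz function is globally $\gamma$-Hölder, which replaces that factor by the harmless $\sigma^{s-d-k-\gamma}\ge 1$-power. Second, one should make explicit that, since $D_1$ is compact, the non-compactness of $\Omega$ lives only in the direction $\sigma\to 0^+$, so membership in $C_0(\Omega)$ is equivalent to uniform decay as $\sigma\to 0^+$, which is delivered by the strictly positive exponent $s-d-\abs{\beta}$.
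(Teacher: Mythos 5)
Your proof is correct and follows essentially the same route as the paper's: the scaling identity $\partial_x^\beta\varphi(x;\omega)=\sigma^{s-d-\abs{\beta}}(\partial^\beta\hat\varphi)((x-y)/\sigma)$, uniform H\"older continuity of the rescaled Gaussian derivatives, and decay at rate $\sigma^{s-d-\abs{\beta}}$ as $\sigma\to0^+$ for the $C_0(\Omega)$ membership. You merely supply the details the paper leaves as "can be shown" (the interpolation between boundedness and Lipschitz continuity to get the global $\gamma$-H\"older bound, and the exponent bookkeeping $s-d-k-\gamma\ge 0$), which is exactly the right way to fill in the sketch.
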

\begin{proof}
\Cref{eq:scaledGaussian} implies
\[
\partial_x^\beta \varphi(x ; \omega) = \sigma^{s-d-\abs{\beta}} \partial_x^\beta \hat{\varphi}((x-y)/\sigma).
\]
Under the assumptions stated above, with \(\abs{\beta}\leq k\), this function can be shown to be uniformly H{\"o}lder continuous with index \(\gamma = \min\{1,s-d-k\}\). Moreover, for \(\omega = (y,\sigma)\) with \(\sigma \to 0\), it converges to zero at the rate \(\sigma^{s-d-\abs{\beta}}\).
\end{proof}
\begin{remark}
\begin{enumerate}
    \item Concerning the previous result, we note that for the integer cases of \revision{\(s\in \{d,d+1,d+2,\ldots\}\)} we cannot set \(k = s - d\), due to the requirement \revision{\(\gamma > 0\)}. Indeed, in the case \(s=d\) we only obtain that $\varphi(x;\cdot) \in L^\infty(\Omega)$ uniformly for $x\in \overline{D}$ and $\varphi(\cdot;\omega) \in L^{\infty}(D_1)$ uniformly for $\omega \in \Omega$ \revision{but \(\varphi\) cannot be extended continuously to the boundary \(\sigma = 0\)}.
    \item \revision{
    We emphasize that \Cref{prop:smoothness_feature_function} is indispensable for subsequent analysis in this section, including the existence of the minimizers and associated error estimates. Hence, the scale-dependent weight $\sigma^{s}$ with $s \geq d + k + \gamma$, in addition to the standard Gaussian normalization, is fundamental to the theoretical framework of our method.
    }
\end{enumerate}

\end{remark}
\begin{proposition}
\label{prop:neural_network_mapping}
    For \(s \geq d + k + \gamma\) with \(k\in \N\cup\{0\}\) and \(\gamma \in (0,1]\) the linear neural network operator is continuous:
    \[
    \cN \colon M(\Omega) \to C^{k,\gamma}(D_1).
    \]
    In particular, for \(s > d+2\) the required derivatives exist and are bounded in terms of the measure
    \[
    \norm{\cN \mu}_{C(\overline{D})} + \norm{\nabla_x \cN \mu}_{C(\overline{D})} +\norm{\nabla^2_x \cN \mu}_{C(\overline{D})}
    \leq C \norm{\mu}_{M(\Omega)}.
    \]
\end{proposition}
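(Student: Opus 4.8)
The plan is to deduce the continuity of $\cN$ from Proposition~\ref{prop:smoothness_feature_function} by a duality argument. First I would fix a multi-index $\beta$ with $\abs{\beta}\leq k$. Since $\cN\mu(x)=\int_\Omega \varphi(x;\omega)\de\mu(\omega)$ and, by Proposition~\ref{prop:smoothness_feature_function}, the map $\omega\mapsto\partial_x^\beta\varphi(x;\omega)$ lies in $C_0(\Omega)$ uniformly in $x\in\overline{D}$, one may differentiate under the integral sign: a standard dominated-convergence / difference-quotient argument (using that the difference quotients of $\partial_x^{\beta'}\varphi(\cdot;\omega)$ for $\abs{\beta'}<\abs{\beta}$ converge uniformly in $\omega$, again by Proposition~\ref{prop:smoothness_feature_function}) gives
\[
\partial_x^\beta \cN\mu(x) = \int_\Omega \partial_x^\beta\varphi(x;\omega)\de\mu(\omega),\qquad x\in D_1.
\]
Then for each such $\beta$ and each $x$,
\[
\abs{\partial_x^\beta\cN\mu(x)} \leq \norm{\partial_x^\beta\varphi(x;\cdot)}_{C_0(\Omega)}\,\norm{\mu}_{M(\Omega)} \leq C_\beta\,\norm{\mu}_{M(\Omega)},
\]
by the definition of the total variation norm as the dual norm against $C_0(\Omega)$, with $C_\beta=\sup_{x\in\overline{D}}\norm{\partial_x^\beta\varphi(x;\cdot)}_{C_0(\Omega)}<\infty$.

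Next I would upgrade the pointwise bounds to the $C^{k,\gamma}$ bound. For the sup-norm part this is immediate from the display above, taking the supremum over $x\in D_1$ and summing over $\abs{\beta}\leq k$. For the Hölder seminorm of the top derivatives, I would write, for $x,x'\in D_1$,
\[
\abs{\partial_x^\beta\cN\mu(x)-\partial_x^\beta\cN\mu(x')} \leq \int_\Omega \abs{\partial_x^\beta\varphi(x;\omega)-\partial_x^\beta\varphi(x';\omega)}\,\de\abs{\mu}(\omega) \leq \Big(\sup_{\omega\in\Omega}[\partial_x^\beta\varphi(\cdot;\omega)]_{C^{0,\gamma}(D_1)}\Big)\norm{x-x'}^\gamma\,\norm{\mu}_{M(\Omega)},
\]
where the uniform Hölder bound on $\partial_x^\beta\varphi(\cdot;\omega)$ over $\omega\in\Omega$ is exactly the second assertion of Proposition~\ref{prop:smoothness_feature_function}. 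Combining, $\norm{\cN\mu}_{C^{k,\gamma}(D_1)}\leq C\norm{\mu}_{M(\Omega)}$, which is the continuity claim. The final ``in particular'' statement follows by taking $k=2$: the hypothesis $s>d+2$ gives $s\geq d+2+\gamma$ for some $\gamma\in(0,1]$ (indeed any $\gamma\leq\min\{1,s-d-2\}$), so Proposition~\ref{prop:neural_network_mapping} applies with $k=2$ and restricting functions from $D_1\supset\overline{D}$ to $\overline{D}$ yields the stated bound on $\cN\mu$ and its first two derivatives.

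The only genuinely delicate point is the justification of differentiation under the integral sign up to order $k$ — i.e., that $\cN\mu\in C^{k}(D_1)$ with $\partial_x^\beta\cN\mu$ given by the integral of $\partial_x^\beta\varphi$ — since $\mu$ is merely a signed Radon measure (not absolutely continuous) and one cannot invoke the usual $L^1$-domination theorems verbatim. The clean way around this is induction on $\abs{\beta}$: assuming the formula for derivatives of order $<\abs{\beta}$, the incremental difference quotients $h^{-1}\big(\partial_x^{\beta-e_j}\varphi(x+he_j;\omega)-\partial_x^{\beta-e_j}\varphi(x;\omega)\big)$ converge to $\partial_x^\beta\varphi(x;\omega)$ uniformly in $\omega\in\Omega$ as $h\to0$ (this uniform convergence is where Proposition~\ref{prop:smoothness_feature_function}, applied with one more derivative of regularity available, is used), and uniform convergence of integrands against the finite measure $\mu$ passes to the limit of the integrals. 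Everything else is a routine application of the dual pairing bound $\abs{\int f\,\de\mu}\leq\norm{f}_{C_0(\Omega)}\norm{\mu}_{M(\Omega)}$.
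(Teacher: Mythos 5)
Your proposal is correct. The paper states Proposition~\ref{prop:neural_network_mapping} without proof, treating it as an immediate consequence of Proposition~\ref{prop:smoothness_feature_function}, and your argument---differentiation under the integral sign justified by difference quotients converging uniformly in $\omega$, the dual pairing bound $\abs{\int f\de\mu}\leq\norm{f}_{C_0(\Omega)}\norm{\mu}_{M(\Omega)}$ for the sup-norms, and the uniform H\"older estimate on $\partial_x^\beta\varphi(\cdot;\omega)$ for the seminorm of the top derivatives---supplies exactly the standard details that the authors omit.
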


The above proposition implies the continuous embedding $\cV(D) \hookrightarrow C^{k,\gamma}(D)$. Moreover, the RKBS $\cV$  is closely related to a classical Besov space. 
In \Cref{app:besov},  we establish the embedding $B^{s}_{1,1}(\R^d)  \hookrightarrow \cV(\R^d)$, where a detailed proof is also provided.
\revision{In the context of wavelet analysis, \cite{meyer1992wavelets} gives a Besov space characterization of the Gaussian ``hump algebra'', which is closely related to the RKBS for \(s=d\) considered here. In our subsequent work~\cite{shao2026sparse}, we establish a precise Besov space characterization for a broad class of integral RKBSs related to scaled RBF kernels including the Gaussian considered here.
}
\revision{These refined characterizations together with the embedding properties of Besov spaces also show that the in the case \(s=d+k\) for integer \(k\), we still obtain continuously differentiable functions in the RKBS \(\cV\). However, for the purposes of generalization (in Section~\ref{subsec:convergence}), we need to quantify the degree of continuity, and consider \(\gamma > 0\).
}

We assume $s\geq d+2 + \gamma$ for $\gamma\in (0,1]$ for the rest of this paper. Therefore \revision{$\cV(D) \hookrightarrow C^{2,\gamma}(\overline{D})$.}
By \Cref{prop:smoothness_feature_function}, we observe that if the sequence of measures $\mu_k$ converges to $\mu$ in the weak-$\ast$ sense, then for any $x\in \overline{D}$, 
\beq
\label{eq:continuity}
\partial^\beta (\cN \mu_k) (x) = \int_{\Omega} \partial^\beta_x \varphi(x ; \omega) \de \mu_k(\omega) \to \int_{\Omega} \partial^\beta_x \varphi(x ; \omega) \de \mu(\omega) = \partial^\beta (\cN \mu) (x) ,
\eeq
for a multi-index $\beta$ with $|\beta|\leq 2$. 
This result leads to the following lemma, which is useful in establishing the existence theory for the minimization problems  \eqref{eq:sparse_min} and \eqref{eq:empirical_sparse_min}.
\begin{lemma}
\label{lem:continuity}
Assume $\mu_k  \stackrel{\ast}{\rightharpoonup} \mu $ in $M(\Omega)$, then we have 
\begin{align*}
&\cE[\cN \mu_k](x) \to \cE[\cN \mu](x)  \quad \forall x \in D, \\
& \cB[\cN \mu_k](x) \to \cB[\cN \mu](x)  \quad \forall x \in \partial D.
\end{align*}
\end{lemma}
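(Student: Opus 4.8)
The goal is to pass the weak-$\ast$ convergence $\mu_k \stackrel{\ast}{\rightharpoonup} \mu$ through the nonlinear operators $\cE$ and $\cB$. The plan is to combine the pointwise convergence of derivatives established in \eqref{eq:continuity} with the Carathéodory continuity assumption on $E$ and $B$ from \Cref{assu:caratheodory}. First I would fix a point $x \in D$. By \eqref{eq:continuity}, applied to each multi-index $\beta$ with $|\beta| \leq 2$, we have
\[
\cN \mu_k(x) \to \cN \mu(x), \quad \nabla_x \cN \mu_k(x) \to \nabla_x \cN \mu(x), \quad \nabla^2_x \cN \mu_k(x) \to \nabla^2_x \cN \mu(x),
\]
so the triple $(\cN\mu_k(x), \nabla_x \cN\mu_k(x), \nabla^2_x \cN\mu_k(x))$ converges in $\R \times \R^d \times \R^{d\times d}$ to $(\cN\mu(x), \nabla_x \cN\mu(x), \nabla^2_x \cN\mu(x))$.

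Next I would invoke \Cref{assu:caratheodory}: for almost every $x \in D$, the map $(u,g,H) \mapsto E(x,u,g,H)$ is continuous. Hence, for such $x$, composing the convergent triple above with this continuous map gives
\[
\cE[\cN\mu_k](x) = E\bigl(x, \cN\mu_k(x), \nabla_x\cN\mu_k(x), \nabla^2_x\cN\mu_k(x)\bigr) \to E\bigl(x, \cN\mu(x), \nabla_x\cN\mu(x), \nabla^2_x\cN\mu(x)\bigr) = \cE[\cN\mu](x).
\]
The argument for $\cB$ is identical, using instead the continuity of $(u,g) \mapsto B(x,u,g)$ for almost every $x \in \partial D$, together with the convergence $\cN\mu_k(x) \to \cN\mu(x)$ and $\nabla_x\cN\mu_k(x) \to \nabla_x\cN\mu(x)$ for $x \in \partial D$ (which again follows from \eqref{eq:continuity}, valid uniformly on $\overline{D} \supseteq \partial D$).

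One subtlety I would address: the statement claims convergence for \emph{all} $x \in D$ (resp. $x\in\partial D$), whereas \Cref{assu:caratheodory} only gives continuity of $E(x,\cdot)$ for almost all $x$. Strictly, the conclusion should be read as holding $\nu_D$-almost everywhere (resp. $\nu_{\partial D}$-almost everywhere) — which is all that is needed for the subsequent $L^2(\nu_D)$-based existence arguments via Fatou's lemma or dominated convergence. The main (and only real) obstacle is this measure-theoretic bookkeeping: one must be careful to restrict to the full-measure set on which the Carathéodory continuity holds, and then the composition of continuous functions with convergent sequences does the rest. No estimates or compactness beyond \eqref{eq:continuity} are required, since convergence here is purely pointwise.
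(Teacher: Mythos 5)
Your proof is correct and follows exactly the same route as the paper: pass the weak-$\ast$ convergence through \eqref{eq:continuity} to get pointwise convergence of $\cN\mu_k$ and its derivatives up to second order, then compose with the Carath\'eodory continuity of $E$ and $B$ from \Cref{assu:caratheodory}. Your remark that the conclusion strictly holds only for $\nu_D$-a.e.\ (resp.\ $\nu_{\partial D}$-a.e.) $x$ is a fair and harmless refinement of the paper's statement, and is all that is needed downstream.
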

\begin{proof}
    The result follows directly from~\eqref{eq:continuity} and \Cref{assu:caratheodory}.
 \end{proof}

\begin{theorem}[Existence of minimizers]
\label{thm:existence}
Suppose \Cref{assu:caratheodory} holds. 
The sparse minimization problem \eqref{eq:sparse_min} admits at least one global minimizer. Similarly, 
\eqref{eq:empirical_sparse_min} admits at least one global minimizer. 
\end{theorem}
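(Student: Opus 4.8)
The plan is to use the direct method of the calculus of variations, treating \eqref{eq:sparse_min} and \eqref{eq:empirical_sparse_min} in parallel. Both objective functionals are nonnegative — each is a sum of squared $L^2$-norms (respectively squared weighted pointwise residuals) plus the penalty $\alpha\norm{\mu}_{M(\Omega)}\ge 0$ — so each infimum is a finite value $j^\ast\ge 0$; finiteness follows by evaluating at $\mu=0$ and using that $x\mapsto E(x,0,0,0)$ and $x\mapsto B(x,0,0)$ are bounded and measurable under \Cref{assu:caratheodory} (with $\nu_D,\nu_{\partial D}$ finite). I would then fix a minimizing sequence $\{\mu_k\}$. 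Since $\alpha\norm{\mu_k}_{M(\Omega)}$ is bounded above by the objective value along the sequence, $\{\mu_k\}$ is bounded in $M(\Omega)$.

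Next I would extract a weak-$\ast$ limit. Because $\Omega = D_1\times(0,\sigma_{\max}]$ is a $\sigma$-compact, locally compact metric space, $C_0(\Omega)$ is separable, so by Banach--Alaoglu bounded subsets of $M(\Omega)=C_0(\Omega)^\ast$ are sequentially weak-$\ast$ compact; passing to a subsequence (not relabeled), $\mu_k\stackrel{\ast}{\rightharpoonup}\mu$ for some $\mu\in M(\Omega)$. The total variation norm is weak-$\ast$ lower semicontinuous, which gives $\norm{\mu}_{M(\Omega)}\le\liminf_k\norm{\mu_k}_{M(\Omega)}$.

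It remains to pass to the limit in the loss. By \Cref{lem:continuity}, $\mu_k\stackrel{\ast}{\rightharpoonup}\mu$ forces $\cE[\cN\mu_k](x)\to\cE[\cN\mu](x)$ for every $x\in D$ and $\cB[\cN\mu_k](x)\to\cB[\cN\mu](x)$ for every $x\in\partial D$. For \eqref{eq:empirical_sparse_min} this already suffices: $\hat L(u_{\mu_k})$ is a fixed finite linear combination of squares of such evaluations, so $\hat L(u_{\mu_k})\to\hat L(u_\mu)$, and combined with the norm inequality the objective at $\mu$ is $\le j^\ast$, so $\mu$ is a global minimizer. For \eqref{eq:sparse_min}, the nonnegative integrands $(\cE[\cN\mu_k])^2$ and $(\cB[\cN\mu_k])^2$ converge pointwise, so Fatou's lemma gives $\norm{\cE[\cN\mu]}_{L^2(\nu_D)}^2\le\liminf_k\norm{\cE[\cN\mu_k]}_{L^2(\nu_D)}^2$ and the analogous bound for the boundary residual, whence $L(u_\mu)\le\liminf_k L(u_{\mu_k})$; adding the norm inequality shows $\mu$ attains $j^\ast$. (Measurability of $x\mapsto\cE[\cN\mu](x)$ and $x\mapsto\cB[\cN\mu](x)$, needed for these $L^2$-norms to be defined, follows from the Carath\'eodory structure of $E,B$ in \Cref{assu:caratheodory} together with $\cN\mu\in C^2(\overline D)$ from \Cref{prop:neural_network_mapping}.)

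I expect the argument to be largely routine. The one point that genuinely needs care is that weak-$\ast$ convergence of the measures yields only \emph{pointwise}, not norm, convergence of the PDE residuals, which is exactly why the continuous problem \eqref{eq:sparse_min} has to be closed via Fatou's lemma rather than continuity of the loss; the empirical problem \eqref{eq:empirical_sparse_min} is easier precisely because its loss depends continuously on finitely many pointwise values. Checking the topological hypotheses on $\Omega$ that make bounded sets in $M(\Omega)$ weak-$\ast$ sequentially compact is the other step deserving a brief justification.
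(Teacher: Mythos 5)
Your proposal is correct and follows essentially the same route as the paper's proof: the direct method, boundedness of the minimizing sequence from the $\alpha\norm{\mu}_{M(\Omega)}$ term, Banach--Alaoglu, weak-$\ast$ lower semicontinuity of the total variation norm, pointwise convergence of the residuals via \Cref{lem:continuity}, and Fatou's lemma to close the continuous loss. The only differences are cosmetic: you spell out the sequential compactness (separability of $C_0(\Omega)$), the finiteness of the infimum, and the fact that the empirical loss is actually continuous rather than merely lower semicontinuous, all of which the paper leaves implicit.
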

\begin{proof}
We use direct methods in the calculus of variations to establish the results. Specifically, we prove the existence of minimizers for \eqref{eq:sparse_min}, with the result for \eqref{eq:empirical_sparse_min} following by a similar argument.  Denote $J(\mu) = L(u_\mu) + \alpha \norm{\mu}_{M(\Omega)}$.  Let $\mu^{(k)}\in M(\Omega)$ be a minimizing sequence, i.e., $J(\mu^{(k)}) \to \inf_{\mu\in M(\Omega)} J(\mu)$. Then since $\| \mu^{(k)}\|_{M(\Omega)}$ is uniformly bounded, there is a subsequence (without relabelling) that converges to $\bar{\mu}\in M(\Omega)$ in the weak-$\ast$ sense.
Now we want to show $J$ is weak-$\ast$ lower semicontinuous such that $\overline{\mu}$ is a minimizer of the problem. First of all, the lower semicontintuity of $\| \cdot \|_{M(\Omega)}$ under weak-$\ast$ convergence is obvious. Second,  we observe that for any multi-index $\beta \in \N^d$ with $|\beta |\leq 2$, $\partial^\beta_x\varphi(x; \cdot) \in C_0(\Omega)$ for a given $x\in \overline{D}$; see Proposition~\ref{prop:smoothness_feature_function}. Therefore, by \Cref{lem:continuity} we have  $\cE[u_{\mu^{(k)}}](x)\to \cE[u_{\bar{\mu}}](x)$ $\forall x\in D$, and $\cB[u_{\mu^{(k)}}](x)\to \cB[u_{\bar{\mu}}](x)$ $\forall x\in \partial D$. Finally, by Fatou's lemma
\[
L(u_{\bar{\mu}})\leq \liminf_{k}
\int_D \frac{1}{2}(\cE[u_{\mu^{(k)}}](x))^2 d\nu_D +\frac{\lambda}{2}   \int_{\partial D} (\cB[u_{\mu^{(k)}}](x)))^2 d\nu_{\partial D} = \liminf_{k} L(u_{\mu^{(k)}}).
\]
Therefore, $J$ is weak-$\ast$ lower semicontinuous and minimizers exist.
\end{proof}

\subsection{Functional analytical setup and assumptions}
\label{subsec:functional_setup}
Recall the conditions in \Cref{assu:wellposedness} on the well-posedness of the PDEs. 
We first present an example of a PDE for which these conditions are satisfied.
Consider the semilinear Poisson equation in \eqref{eq:semilinear}. For simplicity and without loss of generality, we consider the modified formulation
\[
\cE[u] = -\upDelta u + \phi_M(u) - f_{\cE},
\quad \cB[u] = u - f_{\cB},
\]
where $\phi_M \in C^\infty(\R)$ is monotone and equals  $u^3$ if $|u^3|<M$ and $\phi_M(u) = \text{sign}(u)\cdot 2M $ if $|u^3|>2M$. We assume $M>0$ is a sufficiently large constant. This simplification is justified by the maximum principle, which ensures that any classical solution $u$ to \eqref{eq:semilinear} satisfies the bound
$\sup_{D} |u|  \leq 2 \sup_{D} |\bar{u}| $ 
where $\bar{u}$ is a classical solution that solves the linear problem
\[
-\Delta \bar{u} - f_{\cE} = 0 \quad \text{in } D, \quad \bar{u} - f_{\cB} = 0 \quad \text{on } \partial D.
\]
A proof of this estimate can be found, for example, in \cite[Corollary 1.5]{taylor1996partial}. 
For the modified equation, under minimal regularity assumptions, we may take $\cU = H^1_0(D) + f_\cB$ and $\cF = H^{-1}(D)\times H^{1/2}(\partial D)$. 
Indeed, it is not hard to verify that  \Cref{assu:wellposedness}(2) holds for the pair $(\cU, \cF)$ due to the Lipschitz continuity of $\phi_M$, and  \Cref{assu:wellposedness}(3) is satisfied by the monotonicity of the operator $-\Delta u + \phi_M(u)$.  
More precisely, by assuming $f_\cE \in H^{-1}(D)$ and $f_\cB \in H^{1/2}(\partial D)$, \Cref{assu:wellposedness}(2) follows from the estimates
\begin{align*}
    &\norm{-\Delta (u-v)}_{H^{-1}(D)} = \sup_{w\in H^1_0(D)} \frac{\langle -\Delta (u-v) , w\rangle}{\norm{w}_{H^1}} \leq C \norm{u-v}_{H^1(D)}  \\
    & \norm{\phi_M(u) - \phi_M(v)}_{H^{-1}(D)}\leq C\norm{\phi_M(u) - \phi_M(v)}_{L^2(D)} \leq \tilde{C} \norm{u-v}_{L^2(D)} 
\end{align*}
along with the trace theorems for functions in $H^1(D)$. \Cref{assu:wellposedness}(3) is justified by
\[
\begin{split}
& \norm{-\Delta(u-v) + \phi_M(u) - \phi_M(v)}_{H^{-1}(D)} \\
\geq & \frac{\langle-\Delta(u-v) + \phi_M(u) - \phi_M(v), u-v \rangle}{\| u-v\|_{H^1(D)}} \geq \frac{\norm{\nabla(u-v)}_{L^2(D)}^2}{\| u-v\|_{H^1(D)} } \geq  C \norm{u-v}_{H^1(D)}, 
\end{split} 
\]
where we have used monotonicity of $\phi_M$ and Poincar\'e inequality for $u-v \in H_0^1(D)$.
Now the choice of $\cU_o$ and $\cF_o$ depends on the smoothness of the domain and the data.  For example, the classical elliptic regularity for Lipschitz domains allows us to choose 
$\cU_o = H^2(D)\cap \cU$ and $\cF_o = L^2(D) \times H^{3/2}(\partial D)$.  Under sufficient additional smoothness assumptions on the domain $D$ and the data $f_\cE$ and $f_\cB$, one may also take $\cU_o = H^k(D)\cap \cU$ with $\cF_o = H^{k-2}(D) \times H^{k-1/2}(\partial D)$ for any integer $k \geq 2$. 
Recall the embedding $ B^s_{1,1} \hookrightarrow \cV $ established in \Cref{app:besov}. By the embedding properties of Besov spaces~\citep[see, e.g.,][Proposition~4.6]{triebel2006}, for any $k>s$,
\[
H^k = B^k_{2,2} \hookrightarrow B^s_{1,1} \hookrightarrow \cV.
\]
In this case, the PDE solution $u\in \cU_o$ also lies in the RKBS $\cV$.
%\xt{Here is an example where the PDE solution lies in $\cV$. I did not quote the regularity theory \cite[Theorem 5.41]{sawano2018theory} since it is established for $F^{s}_{p,q}$ with $1<p,q$ (some embedding results are needed anyway for $B^s_{1,1}$).}

To reflect the definition of the loss function \eqref{eq:loss}, we introduce the weighted product space $L_\lambda^2 = L^2(D, \nu_D) \times  L^2(\partial D, \nu_{\partial D})$ with the norm
\[
\| e \|_{L_\lambda^2}^2  :=  \frac{1}{2}\| e_E\|_{ L^2(D, \nu_D) }^2 +  \frac{\lambda}{2} \| e_B\|_{ L^2(\partial D, \nu_{\partial D}) }^2
\]
for any $e = (e_E, e_B) \in L^2(D, \nu_D) \times  L^2(\partial D, \nu_{\partial D})$. 
We make the following assumptions on $\cF$ and $\cF_o$.  
\begin{assumption}
We assume that one of the following conditions holds:
\begin{enumerate}
\label{assu:embedding}
\item $\cF_o \hookrightarrow L^2_\lambda \hookrightarrow \cF$; 
\item $\cF_o \hookrightarrow \cF   \hookrightarrow L^2_\lambda$  and $\cF$ is an interpolation space in between  $L^2_\lambda$ and $\cF_o$, i.e.,  there exists $\theta \in (0,1)$ and $C>0$ such that
\[
\| e\|_{\cF} \leq C  \| e \|_{L^2_\lambda}^\theta  \|e \|_{\cF_o}^{1-\theta}. 
\]
\end{enumerate}
\end{assumption}

\begin{remark}
We note that the above assumption is made for notational simplicity. Ideally, the function spaces on $D$ and $\partial D$ should be treated separately, and either condition (1) or (2) may hold independently on each component. This is indeed the case for the earlier example with $\cF = H^{-1}(D)\times H^{1/2}(\partial D)$. 

Moreover, in certain cases, it is also beneficial to modify the form of the loss function depending on the structure of the underlying PDE. For example, in Dirichlet boundary value problems, incorporating derivatives of the boundary term into the loss function can enhance performance. Such extensions are discussed in the subsequent sections. Related ideas on improved phisics-informed loss functions can  be found in \citep{bonito2025convergenceerrorcontrolconsistent,ainsworth2024extended}.

\end{remark}

\subsection{Convergence analysis for the neural network solution}
\label{subsec:convergence}
In this subsection, we establish the convergence of our method in a general setting.

Denote $\nu_D^{K_1}:=\sum_{k=1}^{K_1} w_{1,k} \del_{x_{1,k}}$ and $\nu_{\partial D}^{K_2} := \sum_{k=1}^{K_2} w_{2,k} \del_{x_{2,k}}$,
\revision{which are nonnegative measures by construction}. 
The following assumption guarantees that the numerical quadrature used to define the empirical loss function \eqref{eq:loss_discrete} provides a consistent approximation of the continuous loss  \eqref{eq:loss}. 

\revision{
\begin{assumption}
\label{assu:measureconvergence}
We assume that the discrete measures $\nu_D^{K_1}$ and $\nu_{\partial D}^{K_2}$ converge (in the weak-\(*\) sense) to $\nu_{D}$ and $\nu_{\partial D}$, respectively, i.e., 
\[
\int_D  f \de \nu_D^{K_1} \to \int_D  f \de \nu_D , \quad \int_{\partial D}  g \de \nu_{\partial D}^{K_2} \to \int_{\partial D}  g \de \nu_{\partial D}  
\]
as $K_1\to \infty$ and $K_2\to \infty$ for any $f \in C(\overline{D})$ and $g \in C(\partial D)$. 
\end{assumption}
}
\revision{%
We also note that weak-\(\ast\) convergence in the above implies uniform boundedness of the non-negative measures $\{\nu_D^{K_1}\}$ and $\{\nu_{\partial D}^{K_2}\}$ (by setting \(f=1\) and $g=1$).
Recall the definition of $\cL_E$, $\cL_B$, $\hat{E}$ and $\hat{B}$ in \Cref{eq:linearoperators,eq:PDEoperatorform}.
For the rest of this subsection, we further strengthen Assumptions~\ref{assu:caratheodory} and \ref{assu:firstorderdiff} as follows.
\begin{assumption}
\label{assu:uniformcontinuity}
Let \Cref{assu:firstorderdiff} hold. In addition, the functions $\{A_i(x), b_i(x), c_i(x)\}_{i=1}^{N_E}$ and $\hat{E}(x, l_E)$ are uniformly \(\gamma\)-H{\"o}lder continuous on $\overline{D}$, and $\{d_i(x), e_i(x)\}_{i=1}^{N_B}$ and $\hat{B}(x, l_B)$ are uniformly \(\gamma\)-H{\"o}lder continuous on $\partial D$. 
\end{assumption}
}
\revision{%
\begin{remark}
Assumption~\ref{assu:uniformcontinuity} simplifies the subsequent analysis by ensuring the \(\gamma\)-H{\"o}lder continuity of the PDE-residual.
The arguments below remain valid under weaker assumptions for problems with discontinuous coefficients, such as piecewise continuity for the functions on subdomains. For simplicity of exposition, we omit such discussions. 
\end{remark}
The strengthened continuity allows us to show the following useful result.
}
\revision{%
\begin{lemma}
    \label{lem:uniformconvergence}
Let $\cE[u]$ and $\cB[u]$ be defined as in \eqref{eq:PDEoperatorform}. Under \Cref{assu:uniformcontinuity}, if $\mu_k  \stackrel{\ast}{\rightharpoonup} \mu $ in $M(\Omega)$ and \(\norm{\mu_k}_{M(\Omega)} \leq M < \infty\), then \(\cE[\cN \mu_k](x)\) and \(\cB[\cN \mu_k](x)\) are uniformly \(\gamma\)-H{\"o}lder continuous and
\begin{align*}
& \cE[\cN \mu_k](x) \to \cE[\cN \mu](x) \quad \text{uniformly on } D; \\
& \cB[\cN \mu_k](x) \to \cB[\cN \mu](x) \quad \text{uniformly on } \partial D.
\end{align*}\end{lemma}
\begin{proof}
First, \Cref{lem:continuity} gives the pointwise convergence of $\cE[\cN \mu_k](x)\to \cE[\cN \mu](x)$ for $x\in D$, and $\cB[\cN \mu_k](x)\to \cB[\cN \mu](x)$ for $x\in \partial D$. 
Further, $\sup_k \| \mu_k \|_{M(\Omega)} \leq M <\infty$ and \Cref{prop:neural_network_mapping} imply that $\sup_k \| \cN \mu_k\|_{C^{2,\gamma}(\overline{D})} < \infty$.
By the definition of $\cE$ and $\cB$ in \eqref{eq:PDEoperatorform}, and \Cref{assu:uniformcontinuity}, we know that $\cE[\cN \mu_k]$ and $\cB[\cN \mu_k]$ are uniformly bounded and equicontinuous on $D$ and $\partial D$, respectively.
In fact, a straightforward estimate shows that \(\norm{\cE[\cN \mu_k]}_{C^{0,\gamma}(\overline{D})} + \norm{\cB[\cN \mu_k]}_{C^{0,\gamma}(\partial D)} \leq C(1 + \norm{\cN \mu_k}_{C^{2+\gamma}})\).
%\kp{
%Sketch of verification:
%Consider for instance
%\begin{align*}
%\abs{\cE[u](x) - \cE[u](x')} &= \abs{\hat{E}[\cL_Eu(x), x] - \hat{E}[\cL_E u(x'), x']} \\
%& \leq \abs{\hat{E}[\cL_E u(x), x] - \hat{E}[\cL_E u(x), x']} + \abs{\hat{E}[\cL_E u(x), x'] - \hat{E}[\cL_E u(x'), x']} \\
%& \leq C \abs{x - x'}^\gamma + C_{\nabla_l\hat{E}}\abs{\cL_E u(x) - \cL_E u(x')} \\
%& \leq C \abs{x - x'}^\gamma + \sum_{g \in \{a_i,b_i,c_i\}_i} C_{\nabla_l\hat{E}} \norm{g}_{L^\infty}\abs{D(u(x) - u(x'))} + \norm{g(x) - g(x')} \norm{Du}_{L^\infty} \\
%&\leq C(1 + \norm{u}_{C^{2+\gamma}}) \abs{x - x'}^\gamma),
%\end{align*}
%using the uniform Lipschitz continuity (uniform boundedness of the gradients) of \(\hat{E}\) in the first argument, and the postulated H{\"o}lder continuities (where \(D\) is either \(1, \nabla\) or \(\nabla^2\)).
%}
The uniform convergence then follows from the Arzelà–Ascoli theorem. 
\end{proof}
}

%By \Cref{assu:wellposedness} and the discussion after it, we may assume without loss of generality that for a given input function $f\in \cF$, the unique solution $u^\ast: = u[f]$ lies in $\cV$ and is represented by $\cN \mu^\ast$ for $\mu^\ast \in M(\Omega)$. 

By \Cref{assu:wellposedness}(1), we denote the unique solution to \eqref{eq:main} by $u\in \cU_o$. 
Let $u^\ast = \cN \mu^\ast \in \cV$ be an approximation of $u$ in $\cU_o$. By 
the continuity of $\cR|_{\cU_o}$ in \Cref{assu:wellposedness}(2), it follows that 
\[
\| \cR[u^\ast]  \|_{\cF_o}  = \| \cR[u] -\cR[u^\ast]  \|_{\cF_o}\leq C_1 \| u - u^\ast\|_{\cU_o}. 
\]
The right-hand side in the above can be made arbitrarily \revision{small} by density of $\cV$ in $\cU_o$. 
In the following, we derive an error estimate for the neural network solution. 

\begin{theorem}
\label{thm:convergence}
Let $u\in \cU_{o}$ denote the unique solution to \eqref{eq:main} and \revision{$u^\ast =\cN \mu^\ast \in \cV$ be an approximate solution.}
For a given regularization parameter $\alpha>0$ and collocation number $K = K_1 + K_2 >0$, let  $u_{K,\alpha}  =\cN \mu_{K,\alpha}$ be a global minimizer of the empirical problem \eqref{eq:empirical_sparse_min}. 
\revision{Let Assumptions~\ref{assu:embedding}, ~\ref{assu:measureconvergence} and~\ref{assu:uniformcontinuity} hold and denote by
\[
\eta(\mu^*,\alpha) = \left(\| u - u^\ast\|_{\cU_o}^2 + \alpha \| \mu^\ast \|_{M(\Omega)}\right)^{1/2}
\]
the combined approximation and regularization error.}
As $K_1, K_2 \to \infty$, any limit point $\mu_\alpha$ of $\mu_{K,\alpha}$ in $M(\Omega)$ defines a function $u_\alpha = \cN \mu_\alpha$ that satisfies 
\revision{
\[
\| u_{\alpha} -  u \|_{\cU} \leq C  \eta(\mu^*,\alpha),
\]
}%
if \Cref{assu:embedding}(1) holds, or 
\revision{
\[
\| u_{\alpha} - u\|_{\cU} \leq C \max\left(\alpha^{- (1-\theta)} \eta(\mu^*,\alpha)^{2(1-\theta) + \theta}, \eta(\mu^*,\alpha)^{\theta} \right), 
\]
}%
if \Cref{assu:embedding}(2) holds, where $C>0$ is a generic constant independent of $\alpha$, $\mu^\ast$, $u$ and $u^\ast$. Moreover, the convergence $u_{K, \alpha} \to u_\alpha$ (up to a subsequence) holds in $C^2(\overline{D})$.
\end{theorem}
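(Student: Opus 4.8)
The plan is to use the minimality of $\mu_{K,\alpha}$ together with the consistency of the quadrature to produce an \emph{a priori} bound on the \emph{continuous} residual $\cR[u_\alpha]$ measured in $L^2_\lambda$, and then to convert this into a $\cU$-bound for the error via the stability estimate in \Cref{assu:wellposedness}(3) and the embedding hypotheses \Cref{assu:embedding}. First I would test minimality of $\mu_{K,\alpha}$ against the fixed competitor $\mu^*$, obtaining $\hat{L}_K(u_{K,\alpha}) + \alpha\norm{\mu_{K,\alpha}}_{M(\Omega)} \le \hat{L}_K(u^*) + \alpha\norm{\mu^*}_{M(\Omega)}$. Since $u^*=\cN\mu^*\in\cV\hookrightarrow C^{2,\gamma}(\overline{D})$ and $\hat{E},\hat{B}$ are now continuous in their first argument, $\cE[u^*]$ and $\cB[u^*]$ are bounded and continuous, so setwise convergence of the quadrature measures (\Cref{assu:measureconvergence}) gives $\hat{L}_K(u^*)\to L(u^*)$. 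In particular the right-hand side stays bounded, hence so does $\norm{\mu_{K,\alpha}}_{M(\Omega)}$, and along the subsequence defining the limit point we have $\mu_{K,\alpha}\stackrel{\ast}{\rightharpoonup}\mu_\alpha$ in $M(\Omega)$; set $u_\alpha = \cN\mu_\alpha$ (note $u\in\cU_o$ by \Cref{assu:wellposedness}(1), so the $\cU_o$-norms below make sense).

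Next I would upgrade the convergence of the functions and pass to the limit in the loss. Boundedness of $\{\mu_{K,\alpha}\}$ in $M(\Omega)$ together with \Cref{prop:neural_network_mapping} (with $k=2$) makes $\{u_{K,\alpha}\}$ bounded in $C^{2,\gamma}(\overline{D})$, hence precompact in $C^{2}(\overline{D})$; since $\partial_x^\beta\varphi(x;\cdot)\in C_0(\Omega)$ for $\abs{\beta}\le 2$ (\Cref{prop:smoothness_feature_function}), weak-$*$ convergence gives $\partial^\beta u_{K,\alpha}(x)\to\partial^\beta u_\alpha(x)$ pointwise, so in fact $u_{K,\alpha}\to u_\alpha$ in $C^{2}(\overline{D})$ (a fortiori in $C^1$), which is the last assertion of the theorem. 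Because $\hat{E}$ (resp.\ $\hat{B}$) is jointly continuous — continuous in $x$ by the strengthened hypothesis and Lipschitz in its second argument by \Cref{assu:firstorderdiff} — and the arguments $[\cL_E u_{K,\alpha}](\cdot)$ remain in a fixed compact set, $\cE[u_{K,\alpha}]\to\cE[u_\alpha]$ uniformly on $\overline{D}$ and $\cB[u_{K,\alpha}]\to\cB[u_\alpha]$ uniformly on $\partial D$, with uniform bounds. Writing $\int\cE[u_{K,\alpha}]^2\de\nu_D^{K_1} = \int\cE[u_\alpha]^2\de\nu_D^{K_1} + \int(\cE[u_{K,\alpha}]^2-\cE[u_\alpha]^2)\de\nu_D^{K_1}$, the first term converges to $\int\cE[u_\alpha]^2\de\nu_D$ by setwise convergence of a bounded measurable integrand, and the second is at most $\norm{\cE[u_{K,\alpha}]^2-\cE[u_\alpha]^2}_\infty\,\nu_D^{K_1}(D)\to 0$; the same holds for the boundary term. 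Hence $\hat{L}_K(u_{K,\alpha})\to L(u_\alpha)$.

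Passing to the $\liminf$ in the optimality inequality, using weak-$*$ lower semicontinuity of $\norm{\cdot}_{M(\Omega)}$ and the two limits just established, yields $L(u_\alpha) + \alpha\norm{\mu_\alpha}_{M(\Omega)} \le L(u^*) + \alpha\norm{\mu^*}_{M(\Omega)}$; since $\cR[u]=0$ and $L(v)=\norm{\cR[v]}_{L^2_\lambda}^2$ this reads $\norm{\cR[u_\alpha]}_{L^2_\lambda}^2 + \alpha\norm{\mu_\alpha}_{M(\Omega)}\le \norm{\cR[u^*]}_{L^2_\lambda}^2 + \alpha\norm{\mu^*}_{M(\Omega)}$. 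Under \Cref{assu:embedding}(1), $\cF_o\hookrightarrow L^2_\lambda$ and \Cref{assu:wellposedness}(2) give $\norm{\cR[u^*]}_{L^2_\lambda}\le C\norm{\cR[u^*]}_{\cF_o}\le C\norm{u-u^*}_{\cU_o}$, while $L^2_\lambda\hookrightarrow\cF$ together with the stability estimate \Cref{assu:wellposedness}(3) gives $\norm{u-u_\alpha}_\cU\le C\norm{\cR[u_\alpha]}_{L^2_\lambda}$; combining these produces the first bound. Under \Cref{assu:embedding}(2), put $\eta:=\norm{u-u^*}_\cU^2+\alpha\norm{\mu^*}_{M(\Omega)}$; then $\cF\hookrightarrow L^2_\lambda$ with \Cref{assu:wellposedness}(2) gives $\norm{\cR[u^*]}_{L^2_\lambda}^2\le C\eta$, so the displayed inequality yields both $\norm{\cR[u_\alpha]}_{L^2_\lambda}\le C\eta^{1/2}$ and $\norm{\mu_\alpha}_{M(\Omega)}\le C\eta/\alpha$; the latter with $\cV\hookrightarrow\cU_o$ and the growth bound in \Cref{assu:wellposedness}(2) gives $\norm{\cR[u_\alpha]}_{\cF_o}\le C\max(1,\norm{u_\alpha}_{\cU_o})\le C\max(1,\eta/\alpha)$. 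The interpolation inequality then gives $\norm{\cR[u_\alpha]}_\cF\le C\norm{\cR[u_\alpha]}_{L^2_\lambda}^\theta\norm{\cR[u_\alpha]}_{\cF_o}^{1-\theta}\le C\eta^{\theta/2}\max(1,\eta/\alpha)^{1-\theta}=C\max(\eta^{\theta/2},\,\alpha^{-(1-\theta)}\eta^{(1-\theta)+\theta/2})$, and one more application of \Cref{assu:wellposedness}(3) gives the second bound.

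The main obstacle is the step in the second paragraph: passing to the limit in the empirical loss when the quadrature measures \emph{and} the integrands $\cE[u_{K,\alpha}]^2,\cB[u_{K,\alpha}]^2$ vary simultaneously. This is exactly where the strengthened hypothesis that $\hat{E},\hat{B}$ are $C^{0,\gamma}$ in $x$ is used: it lets one promote the merely pointwise convergence coming from weak-$*$ convergence of the measures — together with the uniform boundedness coming from $C^2$-precompactness of $\{u_{K,\alpha}\}$ — to \emph{uniform} convergence of the integrands, after which setwise convergence of the measures closes the argument. Without continuity of $\hat{E}$ in $x$, a Carath\'eodory integrand composed with a $C^2$-convergent sequence need not even be uniformly bounded, and this step breaks down. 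The only other point requiring care is the bookkeeping of the powers of $\alpha$ and $\eta$ through the interpolation inequality in the second case.
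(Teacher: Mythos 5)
Your proposal is correct and follows essentially the same route as the paper: minimality against the fixed competitor $\mu^\ast$, weak-$\ast$ compactness, passage to the limit in the empirical loss, the residual estimate $\norm{\cR[u_\alpha]}_{L^2_\lambda}^2 \leq \norm{\cR[u^\ast]}_{L^2_\lambda}^2 + \alpha\norm{\mu^\ast}_{M(\Omega)}$, and then the stability/embedding (case 1) or interpolation-plus-growth-bound (case 2) arguments with identical exponent bookkeeping. The one technical variation is that where the paper invokes a Fatou lemma for varying measures to obtain only $L(u_\alpha) \leq \liminf \hat{L}(u_{K,\alpha})$, you split off the fixed integrand $\cE[u_\alpha]^2$ and use uniform convergence of the residuals (available from $C^2$-convergence and the uniform Lipschitz bound on $\hat{E},\hat{B}$) to get genuine convergence of the empirical loss — a slightly stronger and equally valid conclusion.
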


\begin{remark}
\label{rmk:convergence}
\Cref{thm:convergence}  yields the following implications. 
 \begin{itemize}[left=1em]
\item[(1)] The above estimate indicates that the approximation error arises from two sources. The first term reflects the error due to approximating the exact solution $u\in \cU_o$ using elements \revision{from a norm ball in the} space $\cV$, while the second term characterizes the error due to the regularization.
\item[(2)] Using the above estimate, one can select the appropriate $\alpha$ such that $u_\alpha$ converges to the true solution $u$ in $\cU$.
In the first case,  by density of $\cV$ in $\cU_0$, one can select $u^\ast = u^\delta \in \cV$ such that $ \| u - u^\delta\|_{\cU_o}^2 = \delta$. Let $\alpha = \min( \delta/ \norm{u^\delta}_\cV, \delta )$, we observe that $\alpha \to 0$ as $\delta \to 0$.  Then, by applying the estimate from \Cref{thm:convergence}.
 \[
 \| u_{\alpha} -  u \|_{\cU} \leq C(2\delta)^{1/2},
 \]
 which implies $u_\alpha \to u $ in $\cU$ as $\alpha\to0$.
 The second case is more delicate. Without additional assumptions on the function spaces, selecting $\alpha$ that guarantees convergence is challenging. As a simple illustration, we consider a setting where the space $\cV$ contains the solution $u\in \cU_o$.  This inclusion can be justified in certain cases, such as the example provided in \Cref{subsec:functional_setup}. In this simple case, let $u = \cN \mu^\ast$. Then
\[
\| u_{\alpha} - u\|_{\cU} \leq C\alpha^{\theta/2}  \max\left(  \|\mu^\ast \|_{M(\Omega)}^{(1-\theta) + \theta/2},  \|\mu^\ast \|_{M(\Omega)}^{\theta/2} \right), 
\]
 which implies $u_\alpha \to u $ in $\cU$ as $\alpha\to0$.
 \item[(3)] In the cases where $\cF$ does not contain $L^2_\lambda$ and $\cV$ is strictly smaller than $\cU_o$, one possible remedy is to design an appropriate loss function such that the corresponding data space is continuously embedded in $\cF$. Revisiting the example with $\cF = H^{-1}(D)\times H^{1/2}(\partial D)$, it suffices to define the loss function as
\[
    L(u) =\frac{1}{2} \norm{\cE[u] }^2_{L^2(\nu_D)} + \frac{\lambda}{2} \norm{\cB[u]}^2_{H^1(\nu_{\partial D})},
\]
with the corresponding empirical loss function. In this case, we can establish similar error estimates. In \Cref{sec:experiments}, we present numerical experiments comparing the effects of using an $L^2$ boundary loss versus an $H^1$ boundary loss. 
\end{itemize}

\end{remark}

\begin{proof}
We now prove \Cref{thm:convergence}.
Denote $\hat{J}(\mu) = \hat{L}(\cN\mu) + \alpha \norm{\mu}_{M(\Omega)}$.
Since $u_{K,\alpha}  =\cN \mu_{K,\alpha}$ is a global minimizer of \eqref{eq:empirical_sparse_min}, it satisfies 
\[
\alpha \|  \mu_{K,\alpha}\|_{M(\Omega)} \leq \hat{J}(\mu_{K,\alpha}) \leq \hat{J}(\mu^\ast) =  \hat{L}(u^\ast)+ \alpha \|\mu^\ast \|_{M(\Omega)} \quad \text{for all } K>0,
\]
where $\hat{L}(u^\ast) = \hat{L}_{K_1, K_2}(u^\ast)$ is given by 
\[
\hat{L}_{K_1, K_2}(u^\ast)  =\frac{1}{2} \int_{D} (\cE[ u^\ast](x))^2 \de\nu_D^{K_1}+ \frac{\lambda}{2}   \int_{\partial D} (\cB[ u^\ast](x))^2 \de\nu_{\partial D}^{K_2}.
\]
This term converges to $L(u^\ast)$ as $K_1, K_2 \to \infty$, and is therefore uniformly bounded in $K$.  
This implies that the sequence $\{ \mu_{K,\alpha} \}_K \subset M(\Omega)$ is bounded.  By the Banach-Alaoglu theorem, there exists a subsequence (not relabeled) and a $  \mu_{\alpha}\in M(\Omega)$ such that $\mu_{K,\alpha} \stackrel{\ast}{\rightharpoonup} \mu_{\alpha}$ as $K\to \infty$.
By \Cref{lem:uniformconvergence} $u_{K,\alpha}$ converges to $u_{\alpha} = \cN \mu_{\alpha}$ in $C^2(\overline{D})$ as $K\to\infty$.

%By \Cref{eq:continuity}, $u_{K,\alpha}$ and all its partial derivatives up to second order converge pointwise to the corresponding derivatives of $u_{\alpha}$ as $K\to\infty$. Therefore, the functions $u_{K,\alpha}$ and all their first-order derivatives are equicontinuous, and the Arzelà–Ascoli theorem tells us that $u_{K,\alpha}$ converges to $u_{\alpha}$ in $C^1(\overline{D})$ as $K\to\infty$. 

Now we show the estimate for $u_{\alpha}$.
\revision{%
Notice that 
\[
\begin{split}
\hat{L}(u_{K,\alpha}) - L(u_{\alpha}) & = \int_\Omega \left( (\cE[u_{K, \alpha}])^2  -(\cE[u_{ \alpha}] )^2\right)\de\nu_D^{K_1}   + \int_{\partial \Omega} \left( (\cB[u_{K, \alpha}])^2 -(\cB[u_{ \alpha}])^2\right) \de\nu_{\partial D}^{K_2}   \\
& + \int_\Omega (\cE[u_{ \alpha}])^2 \left(\de\nu_D^{K_1} -\de\nu_D \right)  + \int_{\partial \Omega}(\cB[u_{ \alpha}] )^2\left(\de\nu_{\partial D}^{K_2} - \de\nu_{\partial D}  \right) .
\end{split}
\]
The first two terms in the above converge to zero by the uniform convergence established in \Cref{lem:uniformconvergence} and a uniform bound on the total variation norm of the empirical measures implied by weak-\(*\) convergence from Assumption~\ref{assu:measureconvergence} and the last two terms converge to zero. 
Therefore $\lim_{K\to\infty}\hat{L}(u_{K,\alpha}) = L(u_{\alpha})$.
}
Finally, by the lower semicontinuity of the norm $\| \cdot \|_{M(\Omega)}$ under weak-$\ast$ convergence, we conclude
\[
L(u_{\alpha})\leq J(u_{\alpha})\leq \liminf_{ K\to\infty} \hat{J}(\mu_{K,\alpha}) \leq \liminf_{ K\to\infty} \hat{J}( \mu^\ast) = L(u^\ast) +  \alpha \|\mu^\ast \|_{M(\Omega)}. 
\]
The estimate above gives us
\beq
\label{eq:residualestimate}
\| \cR[u_{\alpha}]\|_{L^2_\lambda}^2 \leq \| \cR[u^\ast]\|_{L^2_\lambda}^2  +  \alpha \|\mu^\ast \|_{M(\Omega)}. 
\eeq

We now utilize \Cref{eq:residualestimate} to show the final result. First, we assume \Cref{assu:embedding}(1) holds. Then, combining this with with \Cref{assu:wellposedness}(3), we have
\[
\|u_{\alpha}-u \|_{\cU} \leq C_2 \| \cR[u_{\alpha}]\|_{\cF}^2  \leq  C_2 \| \cR[u_{\alpha}]\|_{L^2_\lambda}^2.
\]
On the other hand, we use \Cref{assu:embedding}(1) and \Cref{assu:wellposedness}(2) to obtain
\[
\| \cR[u^\ast]\|_{L^2_\lambda}^2  +  \alpha \|\mu^\ast \|_{M(\Omega)} \leq C \| \cR[u^\ast]\|_{\cF_0}^2  +  \alpha \|\mu^\ast \|_{M(\Omega)}   \leq CC_1 \| u - u^\ast\|_{\cU_0}^2  +  \alpha \|\mu^\ast \|_{M(\Omega)}. 
\]
Together, these inequalities yield the desired result. Now we assume \Cref{assu:embedding}(2) holds, then the right-hand side of \eqref{eq:residualestimate} is estimated in the same way. For the left-hand side, we use the interpolation equality and \Cref{assu:wellposedness}(2), 
\[
 \| \cR[u_{\alpha}]\|_{\cF}^{2/\theta}  \leq C  \| \cR[u_{\alpha}]\|_{L^2_\lambda}^{2}  \| \cR[u_{\alpha}]\|_{\cF_o}^{2(1-\theta)/\theta} \leq  C  \| \cR[u_{\alpha}]\|_{L^2_\lambda}^{2}  \max(1, \| u_{\alpha}\|_{\cU_o}^{2(1-\theta)/\theta})  
\]
Note that 
\[
  \| u_{\alpha}\|_{\cU_o} \leq   \| u_{\alpha}\|_{\cV}  = \| \mu_{\alpha}\|_{M(\Omega)} \leq \frac{1}{\alpha} L(u^\ast ) + \| \mu^\ast\|_{M(\Omega)} =  \frac{1}{\alpha}\| \cR[u^\ast]\|_{L^2_\lambda}^2  +   \|\mu^\ast \|_{M(\Omega)} . 
\]
Together, we have
\[
\begin{split}
 \| u_{\alpha} - u\|_{\cU}^{2/\theta} &\leq \| \cR[u_{\alpha}]\|_{\cF}^{2/\theta}\\
 &\leq  \max\left(\alpha^{- 2(1-\theta)/\theta} \left(  \| \cR[u^\ast]\|_{L^2_\lambda}^2  + \alpha  \|\mu^\ast \|_{M(\Omega)}\right)^{2(1-\theta)/\theta + 1}, \| \cR[u^\ast]\|_{L^2_\lambda}^2  + \alpha  \|\mu^\ast \|_{M(\Omega)}\right)\\
 &\leq C \max\left(\alpha^{- 2(1-\theta)/\theta} \left( \| u-u^\ast\|_{\cU}^2  + \alpha  \|\mu^\ast \|_{M(\Omega)}\right)^{2(1-\theta)/\theta + 1}, \| u-u^\ast\|_{\cU}^2  + \alpha  \|\mu^\ast \|_{M(\Omega)} \right), 
 \end{split}
\]
which leads to the desired result.
\end{proof}

% \kp{
% To ensure that we can take the simultaneous limit $\alpha_K \to 0$ as $K\to \infty$, we have to ensure some condition of the form
% \[
% \alpha_K \geq C(K),
% \]
% where \(C(K) \to 0\) for \(K \to \infty\);
% cf.~\cite{Hofmann_2007}. The problem is that we may have a well-posed problem, but a potentially a mismatch of spaces and that we need an a~priori bound on the ``generalization error''~\eqref{eq:generalization} (cf~\cite{bach:2017}, denoted estimation error), which requires a robust bound of \(\norm{\mu}\).
% For   second point, I think we can for some results assume that \(u \in \cV\), as an additional regularity assumption (without assuming that \(\cU = \cV\)). However, for real examples this will probably require very smooth domains with very smooth data (more than Lipschitz domain and \(f\in C(D)\)...). However, I do not think this alone will allow us to send \(\alpha \to 0\) for finite \(K\), without some additional work.
% }

The above theorem describes the limiting behavior of the neural network solution as the number of collocation points $K$ tends to infinity. In practice, however, we are also interested in the behavior of the solution for finite $K$. More importantly, given a regularization parameter $\alpha$, we aim to identify a suitable range of $K$ that prevents overfitting. 
\revision{%
For a neural network solution $u_\mu$, we interpret generalization as the property that the true loss $L(u_\mu)$ is close to the empirical loss $\hat L(u_\mu)$ computed from a finite set of collocation points, 
particularly when the network is trained to achieve a small empirical loss.}
This can be analyzed as follows. We first define a discrete version of the weighted norm $\| \cdot\|_{L^2_\lambda}$: 
\[
\norm{e}_{2,\lambda,K}^2 :=
    \frac{1}{2} \sum_{k=1}^{K_1} w_{1,k}(e_E(x_{1,k}))^2 + \frac{\lambda}{2} \sum_{k=1}^{K_2}w_{2,k}(e_B(x_{2,k}) )^2
\]
\revision{%
Now, we leverage that the discrete measures \(\nu^{K_1}_D\) and \(\nu^{K_2}_{\partial D}\) associated to this norm converge according to \Cref{assu:measureconvergence}.
This convergence can be quantified for any residual associated to a trained approximate solution \(u_{K,\alpha} = \cN(\mu_{K,\alpha})\) by leveraging the norm bound and the improved continuity from \Cref{lem:uniformconvergence}. 
}
\revision{
\begin{lemma}
\label{lem:generalization}
Given $\delta, M>0$, there exists $K = K(\delta, M) \in \N$, such that for any $v\in \cV$ with $\| v\|_{\cV} \leq M$
\[
 \left| \hat{L}(v)  - L(v) \right| \leq  \delta
\]
where $\hat{L} = \hat{L}_{K_1, K_2}$ and $K = K_1 + K_2$. 
\end{lemma}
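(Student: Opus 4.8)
The plan is to read the statement as the quantitative counterpart of \Cref{assu:measureconvergence}: it asserts that the quadrature error incurred in replacing $L$ by $\hat L$ is uniformly controlled on $\cV$-balls. Accordingly, the proof would reduce to the consistency estimate \eqref{eq:generalization}, whose constant is monotone in the Hölder norm of the integrand, once that Hölder norm has been bounded uniformly in terms of $M$. So there are two steps: (i) show that $\cR[v]$ ranges over a bounded subset of $C^{0,\gamma}(\overline D)\times C^{0,\gamma}(\partial D)$ as $v$ ranges over $\{v\in\cV:\|v\|_\cV\le M\}$; (ii) feed the resulting uniform Hölder bound into \eqref{eq:generalization} and choose $K$ accordingly.

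For step (i), set $e = \cR[v] = (\cE[v],\cB[v])$. Since the standing assumption $s\ge d+2+\gamma$ gives, via \Cref{prop:neural_network_mapping}, the embedding $\cV(D)\hookrightarrow C^{2,\gamma}(\overline D)$, we have $\|v\|_{C^{2,\gamma}(\overline D)}\le C\|v\|_{\cV}\le CM$. Writing $\cE[v](x)=\hat E(x,[\cL_E v](x))$, the inner map $x\mapsto[\cL_E v](x)$ is then $C^{0,\gamma}$ with norm $\le CM$, being assembled from $v,\nabla v,\nabla^2 v$ (and the fixed coefficients of $\cL_E$); similarly for $\cL_B v$ on $\partial D$. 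Then we would invoke the elementary composition fact that $x\mapsto F(x,g(x))$ is $C^{0,\gamma}$ with norm controlled by $\|g\|_{C^{0,\gamma}}$ whenever $F(\cdot,l)$ is $C^{0,\gamma}$ uniformly in $l$ and $F(x,\cdot)$ is Lipschitz uniformly in $x$ --- here the first property is the strengthened hypothesis imposed at the start of \Cref{subsec:convergence}, the second follows from the uniform gradient bound in \Cref{assu:firstorderdiff}, and the sup-part is handled by the Carathéodory boundedness of $\hat E(\cdot,0)$ together with the Lipschitz bound in $l$. This yields $\|\cE[v]\|_{C^{0,\gamma}(\overline D)}+\|\cB[v]\|_{C^{0,\gamma}(\partial D)}\le \bar M$ for a constant $\bar M=\bar M(M)$, uniformly over the $\cV$-ball.

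For step (ii), we apply \eqref{eq:generalization} to $e=\cR[v]$: there is a constant $C(K,\|e\|_{C^{0,\gamma}})$, nonincreasing in $K$ and nondecreasing in $\|e\|_{C^{0,\gamma}}$, with $C(K,\|e\|_{C^{0,\gamma}})^{-1}\|e\|_{2,\lambda,K}^2\le\|e\|_{L^2_\lambda}^2\le C(K,\|e\|_{C^{0,\gamma}})\|e\|_{2,\lambda,K}^2$. By monotonicity, $C(K,\|e\|_{C^{0,\gamma}})\le C(K,\bar M)$ for every admissible $v$, and $C(K,\bar M)\le 2$ once $K$ is large enough, depending only on $\bar M$ and hence on $M$; we fix such $K_1=K_1(M)$, $K_2=K_2(M)$ and set $K=K_1+K_2$. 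Since by the definitions of the norms $L(v)=\|\cR[v]\|_{L^2_\lambda}^2$ and $\hat L(v)=\hat L_{K_1,K_2}(v)=\|\cR[v]\|_{2,\lambda,K}^2$, the two-sided inequality above reads exactly $\tfrac12\hat L(v)\le L(v)\le 2\hat L(v)$, which is the claim.

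The main obstacle is \eqref{eq:generalization} itself, which is quoted rather than derived in the text. Making it rigorous requires committing to a class of quadrature rules --- for instance composite midpoint/trapezoidal rules on quasi-uniform grids, or low-dispersion point sets --- for which a quantitative error bound of the form $|\int f\,d\nu-\sum_k w_k f(x_k)|\le C\,r(K)\,\|f\|_{C^{0,\gamma}}$ with $r(K)\to 0$ holds, and then applying it to the integrands $f=\cE[v]^2$ and $f=\cB[v]^2$; here one uses that squaring obeys $\|f^2\|_{C^{0,\gamma}}\le C\|f\|_{C^{0,\gamma}}^2$, so the relevant constant remains governed by $\bar M$. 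Everything beyond this is bookkeeping, and if \eqref{eq:generalization} is taken as given the lemma is immediate from the displayed chains of inequalities.
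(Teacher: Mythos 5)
Your proposal is correct and follows essentially the same route as the paper, which presents this lemma as a summary of the discussion immediately preceding it: bound $\|\cR[v]\|_{C^{0,\gamma}}$ uniformly on the $\cV$-ball via $\cV\hookrightarrow C^{2,\gamma}$ and the Hölder/Lipschitz hypotheses on $\hat E,\hat B$, then invoke the quadrature consistency estimate \eqref{eq:generalization} and choose $K$ so its constant is at most $2$. You also correctly identify the one genuine weak point — that \eqref{eq:generalization} is quoted rather than derived and requires committing to a concrete quadrature rule — which the paper likewise leaves unproved.
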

\begin{proof}
We first claim that
\begin{equation}
\label{eq:measureconvergence}
\norm{\nu_D - \nu_D^{K_1}}_{C^{0,\gamma}(\overline{D})^*}
=
\sup_{f: \norm{f}_{C^{0,\gamma}(\overline{D})} \leq 1} \abs*{\int_D f \de \nu_D - \int_D f \de \nu^{K_1}_D}
\to 0
\end{equation}
for \(K_1 \to \infty\) and similarly for \(\nu_{\partial D}\).
Given this, define $e = \cR[v]$, we have
\[
\begin{aligned}
    \abs*{\int_D e^2(x) \de\nu_D - \int_D e^2(x) \de\nu^{K_1}_D}
&\leq \norm{e^2}_{C^{0,\gamma}(\overline{D})} \norm{\nu_D - \nu_D^{K_1}}_{C^{0,\gamma}(\overline{D})^*}\\
&\leq C \norm{e}^2_{C^{0,\gamma}(\overline{D})} \norm{\nu_D - \nu_D^{K_1}}_{C^{0,\gamma}(\overline{D})^*}.
\end{aligned}
\]
and the same holds for \(\nu_{\partial D}\). The above estimate together with the fact that
$
\norm{e}_{C^{0,\gamma}} =\norm{\cR[v]}_{C^{0,\gamma}} \leq C \norm{v}_\cV$ imply the desired result. 

To prove the convergence \eqref{eq:measureconvergence}, we argue by contradiction. If it fails,  then there exists $\epsilon > 0$ and a sequence \(f_{K_1}\) in the unit ball of \(C^{0,\gamma}\) for all \(K_1\) such that 
\[
\abs*{\int_D f_{K_1} \de \nu_D - \int_D f_{K_1} \de \nu^{K_1}_D} \geq \epsilon > 0
\]
for all \(K_1\). By Arzel\`a--Ascoli, up to a subsequence, \(f_{K_1} \to \bar{f}\) uniformly on \(\overline{D}\). 
We now test the convergence of $\nu_D^{K_1}$ to $\nu_{D}$ against $\bar{f}$:
\[
\int_D \bar{f} \de \nu^{K_1}_D - \int_D \bar{f} \de \nu_D
= \int_D f_{K_1} \de \nu^{K_1}_D + \int_D \bar{f} -f_{K_1} \de \nu^{K_1}_D - \int_D f_{K_1} \de \nu_D - \int_D \bar{f} -f_{K_1} \de \nu_D.
\]
The second and fourth term converge to zero, but the remaining two terms are always \(\epsilon\) apart, and thus \(\nu_D^{K_1}\) does not converge to \(\nu_D\) in the weak-$\ast$ sense, which contradicts \Cref{assu:measureconvergence}.
\end{proof}
}

\begin{theorem}
\label{thm:error_bound}
Let $u$ and $u_{K,\alpha}$ denote the functions given in \Cref{thm:convergence}. For any $\delta > 0$, there exist $u^\delta \in \cV$ and  parameters $\alpha = \alpha(\delta) \to 0$ and $K = K(\delta) \to \infty$ as $\delta\to0$ such that 
\[
\| u_{K,\alpha} - u \|_\cU \leq C \delta^{1/2},
\]
if \Cref{assu:embedding}(1) holds, or 
\[
\| u_{K,\alpha} - u \|_\cU \leq C \delta^{\theta/2} \max(1,  \| u^\delta\|_{\cV}^{1-\theta}), 
\]
if \Cref{assu:embedding}(2) holds, where $C>0$ is a generic constant. 
%independent of $\alpha$, $K$, $u_{K,\alpha}$ and $u$.  

%\xt{Here we encountered the same issue to show convergence for the second case. Basically, we can't guarantee $ \delta^{\theta/2} \| u^\delta\|_{\cV}^{1-\theta}$ converges to zero without finer estimates.}
%\kp{As far as I understand this result is proved but we can not simplify it further without imposing additional assumptions in the second case. I think this is fine, since the first setting is the more intuitive one, where \(\cF\) is larger than \(L^2(D) \times L^2(\partial D)\). However, these estimates may warrant a discussion on how to choose these spaces for a simple example involving a Laplacian operator. For Neumann BC, we will de able to obtain solutions in \(H^{3/2}(D)\), but for Dirichlet only in \(H^{1/2}(D)\). As we can see from the theory, trying to upgrade this with the second result can be come difficult if \(\cU_0\) is far from \(\cV\). Another approach may be to improve the loss function, and for instance measure the residual on the boundary in \(H^1(\partial D)\) (which is easily possible). This idea comes from this preprint~\cite{ainsworth2024extendedgalerkinneuralnetwork}. I also think it would be good to connect this discussion to the numerical instabilities we observed in Figure~\ref{fig:three-wide}. I believe this is connected.}
\end{theorem}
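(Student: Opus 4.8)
The plan is to prove \Cref{thm:error_bound} by re-running the chain of estimates in the proof of \Cref{thm:convergence}, but with the limiting step there (Fatou's lemma together with \Cref{assu:measureconvergence}) replaced by the finite-$K$ generalization bound stated in the lemma immediately preceding the theorem. Fix $\delta>0$. First I would use the density of $\cV$ in $\cU_o$ to choose $u^\delta=\cN\mu^\delta\in\cV$ with $\|u-u^\delta\|_{\cU_o}^2\le\delta$ and a near-optimal representing measure, $\|\mu^\delta\|_{M(\Omega)}\le 2\|u^\delta\|_\cV=:M_\delta$. Set $\alpha=\alpha(\delta):=\delta/\max(1,M_\delta)$, so that $\alpha\le\delta\to0$, $\alpha M_\delta\le\delta$, and $\delta/\alpha=\max(1,M_\delta)$. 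Since $\cF_o\hookrightarrow L^2_\lambda$ in both alternatives of \Cref{assu:embedding}, the continuity part of \Cref{assu:wellposedness}(2) yields the approximation bound
\[
L(u^\delta)=\|\cR[u^\delta]\|_{L^2_\lambda}^2\le C\|\cR[u^\delta]\|_{\cF_o}^2\le C\|u-u^\delta\|_{\cU_o}^2\le C\delta.
\]

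The genuinely new ingredient, and the \emph{main obstacle}, is the choice of $K=K(\delta)$: the generalization lemma may only be invoked for a function whose $\cV$-norm is controlled a priori, yet the bound on $\|u_{K,\alpha}\|_\cV$ coming from optimality involves $\hat{L}(u^\delta)$, which itself depends on $K$. I would break this circularity in two stages. \emph{Stage (i):} apply the lemma with $M=M_\delta$ to obtain $K^{(1)}_\delta$ such that $\hat{L}(u^\delta)\le 2L(u^\delta)\le 2C\delta$ for all $K\ge K^{(1)}_\delta$ (by the monotonicity of the quadrature constant in $K$ noted before the lemma, the estimate persists once enough points are used). For such $K$, optimality of $\mu_{K,\alpha}$ in \eqref{eq:empirical_sparse_min} gives $\hat{L}(u_{K,\alpha})+\alpha\|\mu_{K,\alpha}\|_{M(\Omega)}\le\hat{L}(u^\delta)+\alpha\|\mu^\delta\|_{M(\Omega)}$, hence both
\[
\|u_{K,\alpha}\|_\cV\le\|\mu_{K,\alpha}\|_{M(\Omega)}\le\tfrac1\alpha\hat{L}(u^\delta)+M_\delta\le\tfrac{2C\delta}{\alpha}+M_\delta\le(2C+1)\max(1,M_\delta)=:M'_\delta,
\]
which is independent of $K$, and $\hat{L}(u_{K,\alpha})\le\hat{L}(u^\delta)+\alpha M_\delta\le(2C+1)\delta$. \emph{Stage (ii):} apply the lemma again with $M=M'_\delta$ to obtain $K^{(2)}_\delta$, and set $K(\delta):=\max(K^{(1)}_\delta,K^{(2)}_\delta,\lceil1/\delta\rceil)$, the last term only to force $K(\delta)\to\infty$. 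For $K\ge K(\delta)$ the lemma now applies to $u_{K,\alpha}$ itself (its residual lies in $C^{0,\gamma}(\overline D)$ by the strengthened Carath\'eodory hypotheses in force here), giving $L(u_{K,\alpha})\le 2\hat{L}(u_{K,\alpha})\le C\delta$, i.e.\ $\|\cR[u_{K,\alpha}]\|_{L^2_\lambda}\le C\delta^{1/2}$.

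It remains to convert this residual bound into the solution-error bound. Under \Cref{assu:embedding}(1) one has $L^2_\lambda\hookrightarrow\cF$, so the stability estimate \Cref{assu:wellposedness}(3) gives at once $\|u_{K,\alpha}-u\|_\cU\le C\|\cR[u_{K,\alpha}]\|_\cF\le C\|\cR[u_{K,\alpha}]\|_{L^2_\lambda}\le C\delta^{1/2}$. Under \Cref{assu:embedding}(2) I would instead use the interpolation inequality $\|\cR[u_{K,\alpha}]\|_\cF\le C\|\cR[u_{K,\alpha}]\|_{L^2_\lambda}^{\theta}\|\cR[u_{K,\alpha}]\|_{\cF_o}^{1-\theta}$, estimate the second factor by the growth part of \Cref{assu:wellposedness}(2) together with $\cV\hookrightarrow\cU_o$,
\[
\|\cR[u_{K,\alpha}]\|_{\cF_o}\le C\max(1,\|u_{K,\alpha}\|_{\cU_o})\le C\max(1,\|u_{K,\alpha}\|_\cV)\le C\max(1,M'_\delta)=C\max(1,M_\delta),
\]
and combine with $\|\cR[u_{K,\alpha}]\|_{L^2_\lambda}\le C\delta^{1/2}$ and \Cref{assu:wellposedness}(3) to get
\[
\|u_{K,\alpha}-u\|_\cU\le C\|\cR[u_{K,\alpha}]\|_\cF\le C\,\delta^{\theta/2}\max(1,M_\delta)^{1-\theta}\le C\,\delta^{\theta/2}\max\!\big(1,\|u^\delta\|_\cV^{1-\theta}\big).
\]
The one point needing care is that the exponent $1-\theta$, rather than the weaker $1-\theta/2$, is exactly what the balance $\alpha=\delta/\max(1,M_\delta)$ delivers: it makes $\delta/\alpha$, hence $\|u_{K,\alpha}\|_{\cU_o}$ and the $\cF_o$-residual, scale like $\max(1,M_\delta)$ rather than its square; a cruder choice of $\alpha$ would lose a power. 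Everything else is routine bookkeeping with generic constants; the real work is the two-stage argument for $K$ that legitimizes applying the generalization lemma to the minimizer $u_{K,\alpha}$.
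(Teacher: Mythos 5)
Your proposal is correct and follows essentially the same route as the paper's proof: density to pick $u^\delta$, the choice $\alpha\sim\delta/\|u^\delta\|_{\cV}$, the generalization lemma to pass between $\hat{L}$ and $L$, the optimality-based a priori bound on $\|u_{K,\alpha}\|_{\cV}$ that legitimizes applying that lemma to the minimizer, and then stability or interpolation. The only cosmetic difference is that the paper resolves the circularity you flag in one shot by invoking the lemma with $M=3\|u^\delta\|_{\cV}$ and verifying a posteriori that $\|u_{K,\alpha}\|_{\cV}\le 3\|u^\delta\|_{\cV}$, whereas you apply it in two stages; the two are equivalent.
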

\begin{remark}
    As noted in \Cref{rmk:convergence}, without further assumptions on the function spaces, the second case guarantees convergence if we know that $\cV$ contains the exact solution $u\in \cU_o$, which ensures that the $\| u^\delta\|_{\cV}$ remains uniformly bounded. Once again, an alternative remedy is to design a loss function that ensures the corresponding data space is continuously embedded in $\cF$. Under such construction, convergence can be established in a similar way without additional assumptions.
\end{remark}
\begin{proof}
First, by density of $\cV$ in $\cU_o$, for any $\delta>0$, there exists $u^\delta \in \cV$ such that 
\[
L(u^\delta) = \norm{\cR[u^\delta]}_{L^2_\lambda}^2 \leq C \| u - u^\delta\|_{\cU_o} = \delta. 
\]
Using the above lemma, there exists $K = \revision{K(\delta, 3\| u^\delta\|_{\cV})}$ such that 
\[
\revision{
\left| \hat{L}(v) - L(v) \right| \leq \delta
}
\]
for all $v\in \cV$ satisfying $\| v\|_{\cV} \leq  3 \| u^\delta\|_{\cV}$.
Letting $\alpha = \delta/ \| u^\delta \|_{\cV}$, and noting that $u_{K, \alpha}$ is a global minimizer of \eqref{eq:empirical_sparse_min}, we obtain
\[
\alpha \| u_{K, \alpha}\|_{\cV} + \hat{L}(u_{K,\alpha}) \leq \alpha \| u^\delta \|_{\cV} + \hat{L}(u^\delta )\leq   \alpha \| u^\delta \|_{\cV} + \revision{L(u^\delta)+ \delta} = 3\delta. 
\]
This implies $\| u_{K, \alpha}\|_{\cV}  \leq 3  \| u^\delta\|_{\cV}$, and thus
\[
L(u_{K,\alpha}) \leq \revision{ \hat{L}(u_{K,\alpha}) + \delta \leq 4\delta}.
\]
Under \Cref{assu:embedding}(1), it follows that
\[
\| u_{K,\alpha} - u\|_{\cU} \leq C \| \cR[u_{K,\alpha}]\|_{\cF} \leq C \| \cR[u_{K,\alpha}]\|_{L^2_\lambda}  \leq C \left( L(u_{K,\alpha}) \right)^{1/2} \leq C \delta^{1/2}. 
\]
On the other hand, if  \Cref{assu:embedding}(2) holds, then
\[
 \| \cR[u_{K, \alpha}]\|_{\cF}^{2/\theta}  \leq C  \| \cR[u_{K, \alpha}]\|_{L^2_\lambda}^{2}  \| \cR[u_{K, \alpha}]\|_{\cF_o}^{2(1-\theta)/\theta} \leq  C  L(u_{K,\alpha}) \max(1, \| u_{K, \alpha}\|_{\cU_o}^{2(1-\theta)/\theta})  
\]
Moreover, since
\[
  \| u_{K,\alpha}\|_{\cU_o} \leq   \| u_{K, \alpha}\|_{\cV} \leq 3 \| u^\delta\|_{\cV} ,
\]
we obtain
\[
\| u_{K,\alpha} - u \|_\cU^{2/\theta} \leq C \delta \max(1,  \| u^\delta\|_{\cV}^{2(1-\theta)/\theta}),
\]
which leads to the desired result. 
\end{proof}

\revision{
\begin{remark}
The estimates in \Cref{lem:generalization} and \Cref{thm:error_bound} can be made more specific when considering a specific quadrature rule.
For example, suppose the quadrature points are quasi-uniform, with fill distances $h_D$ and $h_{\partial D}$ for the points on $D$ and $\partial D$, respectively. 
The quadrature weights can then be chosen as the measures of the corresponding Voronoi cells. In this setting, one can show that 
\[
\left| \norm{e}_{L^2_\lambda}^2 - \norm{e}_{2,\lambda,K}^2  \right| \leq C \left( h_D^\gamma + \lambda\, h_{\partial D}^{\gamma} \right) \| e\|_{C^{0,\gamma}(\overline{D})} \norm{e}_{C(\overline{D})}
\leq C \left( h_D^\gamma + \lambda\, h_{\partial D}^{\gamma} \right) \| e\|^2_{C^{0,\gamma}(\overline{D})} ,
\]
where $C$ is a generic constant.
For $e = \cR[v]$ and quasi-uniform quadrature points, this implies 
\[
\left|L(v) - \hat L(v)\right|\leq C \left( K_1^{-{\gamma}/{d}} + \lambda\, K_2^{-{\gamma}/{(d-1)}}\right) \norm{v}^{2}_\cV . 
\]
  The constant  $K = K(\delta, 3\| u^\delta\|_{\cV})$ in the proof of \Cref{thm:error_bound} can also be chosen explicitly in this case. Indeed, by the above estimate, one can take $K_1^{-{\gamma}/{d}} \approx \lambda\, K_2^{-{\gamma}/{(d-1)}}$ to balance the errors from the interior and the boundary. Then it suffices to choose $K_1 \approx \left(\|u^\delta\|^{2}_{\cV}/\delta\right)^{d/\gamma}$ and $K_2 \approx \left(\|u^\delta\|^{2}_{\cV}/(\lambda\delta)\right)^{(d-1)/\gamma}$. 
\end{remark}
}

\subsection{Representer theorem}
\label{subsec:representertheorem}
To further simplify the notation, we introduce the combined collocation set for the residual \(\{x_{1,k}\}\cup \{x_{2,k}\} \in \overline{D} = D \cup \partial D\), and the combined set of quadrature weights \(\{w_{1,k}\}\cup \{\lambda w_{2,k}\}\).
Then we introduce a new global index \(k = 1,\ldots, K_1 + K_2 = K\), such that \(x_k = x_{1, k}\)  and \(w_k = w_{1, k}\) for \(k \leq K_1\), while for $k > K_1$, we set \(x_{k} = x_{2, k-K_1}\) and \(w_{k} = \lambda w_{2, k-K_1}\).
With this, we 
introduce the combined linear operator evaluations as
\[
l_k[u_\mu] = \begin{cases}
[\cL_E u_\mu ](x_{k}) &\text{for } k \leq K_1, \\
[\cL_B u_\mu ](x_{k}) &\text{for } k > K_1.
\end{cases}
\]
Moreover we define the \(k\)-th residual function as
\beq
\label{eq:nonlin_resid}
r_k(l_k) = \begin{cases}
\hat{E}(x_{k},l_k) &\text{for } k \leq K_1, \\
\hat{B}(x_{k},l_k) &\text{for } k > K_1.
\end{cases}
\eeq
This allows us to rewrite the regularized discrete residual minimization problem~\eqref{eq:empirical_sparse_min} as
\beq
\label{eq:finite_min_compact}
\min_{\mu \in M(\Omega)} \frac{1}{2}\sum_{k=1}^{K} w_k (r_k(l_k[u_\mu]))^2 +
\alpha \norm{\mu}_{M(\Omega)}
\eeq

We proceed to prove the representer theorem, which fundamentally relies on variants of the Carathéodory theorem \citep{dubins1962extreme,klee1963theorem}.  A key ingredient in the argument is the characterization of the extreme points of certain sets arising in the sparse minimization problem. Recall that an extreme point of a convex set is a point that cannot be expressed as a nontrivial convex combination of two distinct elements in the set. 
To that end, we first summarize some standard properties of closed balls in $M(\Omega)$ in the following lemma. 
\begin{lemma}
\label{lem:sublevelset}
Define $B_t :=  \{\mu\in M(\Omega) \colon \|\mu\|_{M(\Omega)} \leq t\}$.
The following statements hold. 
\begin{enumerate}
\item $B_t$ is compact in the weak-$\ast$ topology on $M(\Omega)$. 
\item The extreme points of $B_t$ are given by $\pm t \delta_\omega$ for $\omega\in \Omega$. 
\end{enumerate}
\end{lemma}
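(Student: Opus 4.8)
The plan is to treat the two assertions separately; (1) is essentially a reference to a standard theorem, while (2) carries the content. For (1), recall from the discussion preceding the lemma that $M(\Omega)$ is isometrically the dual of $C_0(\Omega)$. Since $B_t$ is exactly $t$ times the closed unit ball of this dual space, the Banach--Alaoglu theorem shows that $B_t$ is compact in the weak-$\ast$ topology. (Because $\Omega$ is a separable metric space, $B_t$ is in fact weak-$\ast$ metrizable, but this is not needed.)

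For (2) I would prove both inclusions. To see that each $\pm t\,\delta_\omega$ is an extreme point: suppose $t\,\delta_\omega = \tfrac12(\mu_1 + \mu_2)$ with $\mu_1,\mu_2 \in B_t$. Then $t = \norm{\tfrac12(\mu_1+\mu_2)}_{M(\Omega)} \le \tfrac12\big(\norm{\mu_1}_{M(\Omega)} + \norm{\mu_2}_{M(\Omega)}\big) \le t$ forces $\norm{\mu_i}_{M(\Omega)} = t$. Writing $a_i := \mu_i(\{\omega\})$, we get $a_1 + a_2 = 2t$ while $\abs{a_i} \le \abs{\mu_i}(\{\omega\}) \le \abs{\mu_i}(\Omega) = t$, so $a_1 = a_2 = t$; hence $\abs{\mu_i}(\Omega \setminus \{\omega\}) = t - \abs{\mu_i}(\{\omega\}) \le 0$, that is, $\mu_i = t\,\delta_\omega$. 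The argument for $-t\,\delta_\omega$ is identical (or follows by symmetry under $\mu \mapsto -\mu$).

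Conversely, let $\mu$ be an extreme point of $B_t$; assume $t>0$, the case $t=0$ being trivial. First, $\norm{\mu}_{M(\Omega)} = t$, since otherwise $\mu = \tfrac12\big((\mu+\varepsilon\nu)+(\mu-\varepsilon\nu)\big)$ with $\mu \pm \varepsilon\nu \in B_t$ for a fixed $\nu \ne 0$ and small $\varepsilon>0$, contradicting extremality. Next, from the Jordan decomposition $\mu = \mu^+ - \mu^-$ with $\mu^+(\Omega) + \mu^-(\Omega) = t$: if both $\mu^\pm \ne 0$, set $p := \mu^+(\Omega)/t \in (0,1)$; then $\mu = p\big(\mu^+/p\big) + (1-p)\big(-\mu^-/(1-p)\big)$ is a nontrivial convex combination of distinct elements of $B_t$, impossible. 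So $\mu \ge 0$ or $\mu \le 0$, and replacing $\mu$ by $-\mu$ if needed we may assume $\mu/t$ is a probability measure. Finally I claim $\mu$ is a point mass: if $\supp\mu$ contained distinct points $\omega_1,\omega_2$, choose disjoint open sets $U_1 \ni \omega_1$, $U_2 \ni \omega_2$; by definition of support $\mu(U_i) > 0$, and since $\mu(U_1)+\mu(U_2) \le t$ the set $A := U_1$ satisfies $0 < \mu(A) < t$. With $p := \mu(A)/t \in (0,1)$, the measures $\nu_1 := \tfrac1p\,\mu(\cdot \cap A)$ and $\nu_2 := \tfrac1{1-p}\,\mu(\cdot \cap (\Omega\setminus A))$ lie in $B_t$, are distinct (disjoint supports, both nonzero), and satisfy $\mu = p\,\nu_1 + (1-p)\,\nu_2$ --- again contradicting extremality. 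Hence $\supp\mu = \{\omega\}$, and since $\mu$ is Radon we have $\mu(\Omega\setminus\supp\mu)=0$, so $\mu(\{\omega\}) = \mu(\Omega) = t$, i.e.\ $\mu = t\,\delta_\omega$.

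The only step demanding any care is the last one --- promoting ``$\mu$ is not a point mass'' to the existence of a Borel set $A$ with $0 < \mu(A) < t$ --- which uses that $\Omega$ is a separable metric space, so that $\mu$ puts full mass on its closed support and distinct support points can be separated by disjoint open sets. Everything else reduces to Banach--Alaoglu, the Jordan decomposition, and the definition of an extreme point, so I expect the formal proof to be short.
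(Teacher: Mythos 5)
Your argument is correct. Note, however, that the paper does not actually prove this lemma: it dispatches part (1) with a one-line appeal to Banach--Alaoglu plus weak-$\ast$ lower semicontinuity of the total variation norm (exactly as you do), and for part (2) it simply declares the characterization of extreme points ``well-known'' and cites the literature (Bartolucci et al.\ and Bredies--Carioni). So your contribution relative to the paper is a complete, self-contained, elementary proof of part (2), and it holds up: the forward inclusion via evaluating the convex combination on the singleton $\{\omega\}$ is sound; the converse correctly reduces in three steps (norm must equal $t$ by a perturbation argument, the measure must be signed-definite via the Jordan decomposition, and the support must be a singleton via restriction to a set $A$ with $0<\mu(A)<t$). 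You are also right to flag the only point of care --- that $\Omega = D_1\times(0,\sigma_{\max}]$ is a separable (second-countable, locally compact) metric space, so a Radon measure assigns full mass to its closed support and a nonzero measure has nonempty support --- which is what lets you conclude $\mu=t\,\delta_\omega$ from $\supp\mu=\{\omega\}$. The trade-off is just length versus self-containedness: the paper outsources the content to a citation, while your version makes the lemma verifiable in place at the cost of about a page.
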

The first statement in the above comes from the Banach-Alaoglu theorem and the lower semicontinuity under the weak-$\ast$ convergence. The second statement is also well-known, see e.g., \cite{bartolucci2023understanding,bredies2020sparsity}.

Let $\bar{N} = N_E K_1 + N_B K_2$. Using the notation introduced above, we view the collection $l[u_\mu] := (l_k[u_\mu])_{k=1}^{\bar{N}}$ as a vector in $\R^{\bar{N}}$.
Given a vector $\vec{l}\in \R^{\bar{N}}$, we consider the following minimization problem:
\beq
\label{eq:constrainedmin}
\min_{\mu \in M(\Omega)}  \| \mu\|_{M(\Omega)} \quad \text{subject to} \quad  l[u_{\mu}] =\vec{l}.
\eeq

\begin{lemma}
\label{lem:constrainedmin}
Assume that the solution set $S$ of \eqref{eq:constrainedmin} is nonempty. Then $S$ is a compact convex subset of $M(\Omega)$ equipped with the weak-$\ast$ topology.  As a consequence, $S$ contains at least one extreme point. 
\end{lemma}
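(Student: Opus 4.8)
The plan is to realize $S$ as the intersection of the feasibility set of \eqref{eq:constrainedmin} with a fixed weak-$\ast$ compact ball, deduce convexity and compactness from that description, and then close with the Krein--Milman theorem.

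First I would fix the optimal value. Since $S$ is nonempty, the quantity $m^\ast := \min\{\norm{\nu}_{M(\Omega)} \colon l[u_\nu] = \vec{l}\}$ is a well-defined finite number and is attained; every $\mu \in S$ satisfies $\norm{\mu}_{M(\Omega)} = m^\ast$, and since every feasible measure has norm at least $m^\ast$, one has the identity
\[
S = \{\mu \in M(\Omega) \colon l[u_\mu] = \vec{l}\} \cap B_{m^\ast},
\]
with $B_{m^\ast}$ as in \Cref{lem:sublevelset}. Convexity is then immediate: for $\mu_0, \mu_1 \in S$ and $t\in[0,1]$, the measure $\mu_t = (1-t)\mu_0 + t\mu_1$ is still feasible because $\mu \mapsto l[u_\mu]$ is linear (recall $\cN$ is linear and each $l_k$ is a linear functional of $u$), and $\norm{\mu_t}_{M(\Omega)} \leq m^\ast$ by the triangle inequality, while feasibility forces $\norm{\mu_t}_{M(\Omega)} \geq m^\ast$; hence $\mu_t \in S$.

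The next step is compactness in the weak-$\ast$ topology. The point to check is that each coordinate $\mu \mapsto l_k[u_\mu]$ is weak-$\ast$ continuous: by construction $l_k[u_\mu]$ is a fixed linear combination of the values $\partial_x^\beta(\cN\mu)(x_k) = \int_\Omega \partial_x^\beta\varphi(x_k;\omega)\,\de\mu(\omega)$ with $\abs{\beta}\le 2$, and \Cref{prop:smoothness_feature_function} (which applies under the standing assumption $s \ge d+2+\gamma$) guarantees $\partial_x^\beta\varphi(x_k;\cdot)\in C_0(\Omega)$, so testing against it is weak-$\ast$ continuous. Therefore the feasibility set is the preimage of the single point $\vec{l}\in\R^{\bar{N}}$ under a weak-$\ast$ continuous map into $\R^{\bar{N}}$, hence weak-$\ast$ closed; intersecting it with the weak-$\ast$ compact ball $B_{m^\ast}$ (\Cref{lem:sublevelset}(1), i.e.\ Banach--Alaoglu) shows $S$ is weak-$\ast$ compact.

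Finally, $S$ is a nonempty, convex, weak-$\ast$ compact subset of $M(\Omega)$, and the weak-$\ast$ topology makes $M(\Omega)$ a Hausdorff locally convex topological vector space, so the Krein--Milman theorem yields that $S$ has at least one extreme point. I do not anticipate a genuine obstacle here: the only place requiring care is the weak-$\ast$ continuity of the constraint functionals, which is handed to us directly by \Cref{prop:smoothness_feature_function}, and everything else is a standard application of Banach--Alaoglu and Krein--Milman. The substantive work is deferred to the representer theorem, where this extreme point is combined with a Carathéodory-type dimension count.
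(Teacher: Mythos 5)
Your proposal is correct and follows essentially the same route as the paper's proof: convexity from linearity of the constraint, weak-$\ast$ compactness by combining weak-$\ast$ closedness of the solution set with the Banach--Alaoglu compactness of a norm ball (\Cref{lem:sublevelset}(1)), and Krein--Milman to produce an extreme point. Your version is slightly more explicit in identifying the ball radius as the optimal value $m^\ast$ and in sourcing the weak-$\ast$ continuity of the constraint functionals from \Cref{prop:smoothness_feature_function}, but the argument is the same.
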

\begin{proof}
First of all, by the linearity of the constraint, it is straightforward that $S$ is convex. By the lower semicontinuity of the total variation norm and \Cref{lem:continuity}, $S$ is also closed. Finally, since the optimization problem minimizes the total variation norm subject to a constraint, any solution must lie in a norm sublevel set $B_t$ for some $t>0$. Hence, $S\subset B_t$ is a closed subset of a compact set and is therefore compact. Lastly, by the Krein-Milman theorem \citep{conway1994course}, $S$ contains at least one extreme point. 
\end{proof}

The representer theorem can now be proved with known arguments ~\citep[cf., e.g.,][Section~3.3.2]{boyer2019representer}.

\begin{theorem}[Representer theorem (finite representation)]
\label{thm:representer}
The problem~\eqref{eq:empirical_sparse_min} possesses a finite solution
\[
\bar{\mu} = \sum_{n=1}^{N} c_n \delta_{\omega_n},
\quad
\cN \bar{\mu} = \sum_{n=1}^{N} c_n \varphi(\cdot;\omega_n)
\]
where the number of atoms satisfies \(N \leq \bar{N} = N_E K_1 + N_B K_2\).
\end{theorem}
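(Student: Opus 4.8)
The plan is to reduce the infinite-dimensional problem \eqref{eq:empirical_sparse_min} to a finite-dimensional one and then apply a Carathéodory-type dimension count. First I would invoke \Cref{thm:existence} to obtain a global minimizer $\mu^\ast \in M(\Omega)$ of \eqref{eq:empirical_sparse_min}, and record the finitely many linear functional values $\vec{l}^{\,\ast} := l[u_{\mu^\ast}] \in \R^{\bar N}$, where $\bar N = N_E K_1 + N_B K_2$. The key observation is that the empirical loss $\hat L(u_\mu)$ depends on $\mu$ only through the vector $l[u_\mu]$ (via the residual functions $r_k$ in \eqref{eq:nonlin_resid}), so any $\mu$ with $l[u_\mu] = \vec{l}^{\,\ast}$ and $\norm{\mu}_{M(\Omega)} \leq \norm{\mu^\ast}_{M(\Omega)}$ is also a global minimizer. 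Thus it suffices to study the constrained problem \eqref{eq:constrainedmin} with $\vec{l} = \vec{l}^{\,\ast}$: its solution set $S$ is nonempty (it contains $\mu^\ast$, which must be norm-minimal among all measures producing $\vec{l}^{\,\ast}$ — otherwise a smaller one would strictly decrease the objective), and by \Cref{lem:constrainedmin} it is weak-$\ast$ compact, convex, and possesses an extreme point $\bar\mu$.

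The heart of the argument is then to show that any extreme point $\bar\mu$ of $S$ is a finite combination of at most $\bar N$ Dirac masses. The standard approach: set $t := \norm{\bar\mu}_{M(\Omega)}$ and consider the ball $B_t$ from \Cref{lem:sublevelset}, which is weak-$\ast$ compact with extreme points exactly $\{\pm t\,\delta_\omega : \omega \in \Omega\}$. The set $S$ is the intersection of $B_t$ with the affine subspace cut out by the $\bar N$ linear constraints $l[u_\mu] = \vec{l}^{\,\ast}$ (at least, $S$ is contained in such an intersection; one should check that on $B_t$ this intersection equals $S$, which holds since norm-minimality is automatic once $\norm{\mu} \leq t$). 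I would then apply a Carathéodory/Dubins-type theorem for the extreme points of a compact convex set intersected with finitely many hyperplanes \cite{dubins1962extreme,klee1963theorem}: an extreme point of such an intersection is a convex combination of at most $\bar N + 1$ extreme points of $B_t$, but because the constraints are linear equalities one actually gets that $\bar\mu$ lies in the convex hull of at most $\bar N$ of the points $\pm t\,\delta_\omega$. Writing $\bar\mu = \sum_{n=1}^{N} c_n \delta_{\omega_n}$ with $N \leq \bar N$ then gives the claim, and $\cN\bar\mu = \sum_n c_n \varphi(\cdot;\omega_n)$ by definition of $\cN$.

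The main obstacle I anticipate is making the Carathéodory bound sharp enough to yield $N \leq \bar N$ rather than $N \leq \bar N + 1$. The naive Minkowski/Carathéodory theorem in a space with $\bar N$ affine constraints gives $\bar N + 1$ atoms; shaving off the extra one requires the observation that the $\pm t\,\delta_\omega$ all lie on the sphere of radius $t$ and the constraint set is a genuine affine subspace (not just a half-space), together with the fact that $\bar\mu$ being extreme in $S$ forces the representing measure on the extreme points of $B_t$ to be supported on an affinely independent — hence at most $\bar N$-point — set; this is precisely the content of the Dubins–Klee refinement, so I would cite \cite{dubins1962extreme,klee1963theorem} for the clean statement rather than reprove it. A secondary technical point is verifying that the map $\mu \mapsto l[u_\mu]$ is weak-$\ast$ continuous on bounded sets (so that $S$ is weak-$\ast$ closed); this follows from \Cref{prop:smoothness_feature_function} exactly as in \eqref{eq:continuity} and \Cref{lem:continuity}, since each $l_k$ is a finite linear combination of point evaluations of $u_\mu$ and its derivatives up to second order, i.e.\ of integrals against functions $\partial_x^\beta \varphi(x_k;\cdot) \in C_0(\Omega)$.
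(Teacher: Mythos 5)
Your proposal is correct and follows essentially the same route as the paper: fix the optimal linear-measurement vector $\bar l = l[u_{\bar\mu}]$, observe that the nonlinear problem reduces to the convex norm-minimization problem \eqref{eq:constrainedmin} (with equivalence of solution sets), and then extract an extreme point of the compact convex solution set $S$ that is a combination of at most $\bar N$ extreme points $\pm t\,\delta_\omega$ of the ball $B_t$ via the Dubins--Klee/Carath\'eodory machinery. The only cosmetic difference is that the paper delegates the dimension count to \cite[Theorem 3.1]{boyer2019representer} (itself built on \cite{dubins1962extreme,klee1963theorem}), whereas you invoke Dubins--Klee directly and flag the $\bar N$ vs.\ $\bar N+1$ issue explicitly, which is exactly the point that reference handles.
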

\begin{proof}
According to Theorem~\ref{thm:existence}, a (not necessarily finite) solution exists. However, standard representer theorems derived for convex problems are not immediately applicable~\citep[see, e.g.,][Theorem 3.9]{bartolucci2023understanding}, due to the nonlinearity~\eqref{eq:nonlin_resid}. We argue by showing that any solution of~\eqref{eq:finite_min_compact} also solves a simpler convex problem. For that we define the optimal linear operator vector as
\(\bar{l} = l[u_{\bar{\mu}}] \in \R^{\bar{N}}\)
It is easy to see that \(\bar{\mu}\) is also a minimizer of problem~\eqref{eq:constrainedmin} with data \(\vec{l} = \bar{l}\)
% the following convex problem
%\[
%\min \{ \norm{\mu}_{M(\Omega)} \;|\;
% \mu \in M(\Omega);\, l[u_{\mu}] = \bar{l} \},
%\]
and conversely, any solution to this problem is also a solution of \eqref{eq:empirical_sparse_min}.  This problem is convex and, by applying \cite[Theorem 3.1]{boyer2019representer}, together with \Cref{lem:sublevelset,lem:constrainedmin}, we conclude that there exists a solution expressible as a convex combination of at most $\bar{N}$ extreme points of the norm ball \( B_{t^\ast} \), where \( t^\ast \) is the optimal value of \eqref{eq:empirical_sparse_min}.
This yields the desired expression for $\bar{\mu}$ by \Cref{lem:sublevelset}(2). 
\end{proof}

%\begin{remark}
%A similar argument can also be used for the \(\delta\)-constrained problem~\eqref{eq:delta_constrained_problem}, even if \(\delta = 0\).
%\kp{However, a set of conditions that ensures that this problem has admissible points with residual \(\leq \delta\) needs to be added to guarantee that solutions exist in the first place.}
%\xt{Shall we remove this remark?}
%\end{remark}

\subsection{Optimality conditions and dual variables}
\label{subsec:optimality}
First order optimality conditions for this problem can be derived.
\begin{proposition}
\label{prop:first_order_nec}
Any solution \(\bar{u}\) with \(\bar{u} = u_{\bar{\mu}}\) to the problem~\eqref{eq:finite_min_compact} fulfills the following first order necessary conditions:
\[
    \sum_{k=1}^{K} w_k r_k(l_k[\bar{u}])(\nabla_l r_k(l_k[\bar{u}]))^T l_k[\cN(\mu - \bar{\mu})] +
    \alpha (\norm{\mu}_{M(\Omega)} - \norm{\bar{\mu}}_{M(\Omega)}) \geq 0,
\]
for all \(\mu \in M(\Omega)\). Equivalently, it fulfills the conditions
\[
    \sum_{k=1}^{K} w_k r_k(l_k[\bar{u}]) (\nabla_l r_k(l_k[\bar{u}]))^T l_k[u - \bar{u}] +
    \alpha (\norm{u}_{\cV(D)} - \norm{\bar{u}}_{\cV(D)}) \geq 0,
    \]
for all \(u \in \cV(D)\).
\end{proposition}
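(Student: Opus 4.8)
The plan is to treat \eqref{eq:finite_min_compact} as a composite problem $J(\mu) = F(\mu) + \alpha\norm{\mu}_{M(\Omega)}$, where $F(\mu) = \tfrac12 \sum_{k=1}^K w_k (r_k(l_k[u_\mu]))^2$ is smooth along line segments and $\alpha\norm{\cdot}_{M(\Omega)}$ is convex, and to derive the stated variational inequality by the standard first-order perturbation argument. First I would record the two structural facts that make this work: (i) by \Cref{prop:neural_network_mapping} the map $\mu \mapsto \cN\mu$ is bounded linear into $C^2(\overline{D})$, so each $\mu \mapsto l_k[u_\mu]$ (a finite linear combination of point evaluations of $u_\mu$ and of its first and second derivatives) is a bounded linear map from $M(\Omega)$ into $\R^{N_E}$ or $\R^{N_B}$; and (ii) by \Cref{assu:firstorderdiff} each $l \mapsto r_k(l) = \hat E(x_k,l)$ or $\hat B(x_k,l)$ is differentiable with bounded gradient. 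Consequently, for fixed $\mu$ the scalar function $t \mapsto F(\bar\mu + t(\mu-\bar\mu))$ is differentiable on $[0,1]$, since $l_k[u_{\bar\mu + t(\mu-\bar\mu)}] = l_k[\bar u] + t\, l_k[\cN(\mu-\bar\mu)]$ is affine in $t$ and $l \mapsto \tfrac12 w_k r_k(l)^2$ is differentiable; by the chain rule its derivative at $t=0$ equals $\sum_{k=1}^K w_k r_k(l_k[\bar u])(\nabla_l r_k(l_k[\bar u]))^T l_k[\cN(\mu-\bar\mu)]$.

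Next I would run the perturbation. Since $\bar\mu$ is a minimizer and $\bar\mu + t(\mu-\bar\mu) \in M(\Omega)$ for every $t\in[0,1]$, we have $J(\bar\mu + t(\mu-\bar\mu)) - J(\bar\mu) \geq 0$. For the nonsmooth part I would use convexity of the total variation norm, $\norm{(1-t)\bar\mu + t\mu}_{M(\Omega)} \leq (1-t)\norm{\bar\mu}_{M(\Omega)} + t\norm{\mu}_{M(\Omega)}$, which bounds the norm increment from above by $t\bigl(\norm{\mu}_{M(\Omega)} - \norm{\bar\mu}_{M(\Omega)}\bigr)$. Combining, dividing by $t>0$, and letting $t\to 0^+$ removes the need to differentiate the norm and leaves exactly
\[
0 \le \sum_{k=1}^{K} w_k r_k(l_k[\bar u])(\nabla_l r_k(l_k[\bar u]))^T l_k[\cN(\mu-\bar\mu)] + \alpha\bigl(\norm{\mu}_{M(\Omega)} - \norm{\bar\mu}_{M(\Omega)}\bigr),
\]
which is the first asserted inequality.

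Finally I would establish the equivalence with the $\cV(D)$ form. The key observation is that $F$ depends on $\mu$ only through $u_\mu = \cN\mu$, so a minimizer $\bar\mu$ must be a norm-minimal representation of $\bar u$, i.e.\ $\norm{\bar\mu}_{M(\Omega)} = \norm{\bar u}_{\cV(D)}$; moreover, such a minimal representation is attained for every $u\in\cV(D)$ by Banach--Alaoglu together with the weak-$\ast$ lower semicontinuity of the norm (the same compactness argument as in \Cref{thm:existence}). Given $u\in\cV(D)$, pick $\mu$ with $\cN\mu = u$ and $\norm{\mu}_{M(\Omega)} = \norm{u}_{\cV(D)}$; substituting into the first inequality and using $\cN(\mu-\bar\mu) = u-\bar u$ together with $\norm{\mu}_{M(\Omega)} - \norm{\bar\mu}_{M(\Omega)} = \norm{u}_{\cV(D)} - \norm{\bar u}_{\cV(D)}$ yields the $\cV$-inequality. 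Conversely, for arbitrary $\mu\in M(\Omega)$ set $u = \cN\mu$; then $l_k[u-\bar u] = l_k[\cN(\mu-\bar\mu)]$ while $\norm{u}_{\cV(D)} - \norm{\bar u}_{\cV(D)} \le \norm{\mu}_{M(\Omega)} - \norm{\bar\mu}_{M(\Omega)}$, so (as $\alpha>0$) the $\cV$-inequality implies the $M(\Omega)$-inequality. The only mildly delicate points are the one-sided chain-rule differentiation along the segment (which needs only differentiability, not $C^1$, of the $r_k$) and the attainment of the norm-minimal representation used in the equivalence; I do not expect a substantial obstacle beyond this bookkeeping.
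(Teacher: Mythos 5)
Your proposal is correct. The paper states this proposition without proof (it only remarks that the conditions ``can be derived''), and your argument is the standard one it implicitly relies on: perturb along the segment $\bar\mu+t(\mu-\bar\mu)$, use convexity of the total variation norm to avoid differentiating it, pass to $t\to0^+$ for the smooth residual part, and obtain the $\cV(D)$ form via norm-minimal representatives (whose existence follows from Banach--Alaoglu and weak-$\ast$ lower semicontinuity, as you note). All the supporting facts you invoke (linearity and boundedness of $\mu\mapsto l_k[u_\mu]$ via \Cref{prop:neural_network_mapping}, differentiability of $r_k$ via \Cref{assu:firstorderdiff}, and $\norm{\bar\mu}_{M(\Omega)}=\norm{\bar u}_{\cV(D)}$ for a minimizer) are available and correctly used.
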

To further interpret these conditions, we use the well-known characterization of the sub-differential
\begin{align*}
\partial \norm{\bar{\mu}}_{M(\Omega)}
&= \{v \in C_0(\Omega) \;|\; \norm{\mu}_{M(\Omega)} - \norm{\bar{\mu}}_{M(\Omega)} \geq \pair{\mu - \bar{\mu}, v} \text{ for all } \mu \in M(\Omega)\} \\
&= \{v \in C_0(\Omega) \;|\; \norm{v}_{C(\Omega)} \leq 1 , v = \sgn \bar{\mu} \text{ on } \supp \bar{\mu} \}
\end{align*}
to write the optimality conditions as
\[
-\bar{p} \in \alpha \partial \norm{\bar{\mu}}_{M(\Omega)}
\]
for some appropriate dual variable \(\bar{p} \in C_0(\Omega)\), such that
\[
\pair{\bar{p}, \mu - \bar{\mu}} =
\sum_{k=1}^{K} w_k r_k(l_k[\bar{u}])(\nabla_l r_k(l_k[\bar{u}]))^T l_k[\cN(\mu - \bar{\mu})]
\]
To characterize \(\bar{p}\), we require the dual operator of \(\cN\) and the linear PDE operator in an appropriate sense. However, this can get very technical due to the fact that this dual operator would have to take inputs from the dual space of \(C^{2}(\overline{D})\), which is a space of distributions~\citep[cf.][section~5.2]{wachsmuth2024nogap}.
To simplify the presentation, we identify \(C^{2}(\overline{D})\) with  a subspace of \(C(\overline{D}, \R \times \R^d \times \R^{d^2})\), using the embedding
\[
\iota: C^{2}(\overline{D}) \to C(\overline{D}, \R \times \R^d \times \R^{d^2}),
\quad\iota v(x) = (v(x), \nabla v(x), \nabla^2 v(x)) \quad \text{for all } x \in \overline{D}.
\]
Combining the neural network and this embedding yields
\[
[\iota \cN \mu](x) = \int_\Omega (\varphi(x;\omega), \nabla_x \varphi(x,\omega), \nabla_x^2\varphi(x,\omega)) \de \mu(\omega).
\]
Then, we realize that \(l_k[\cN \mu]\) corresponds to an evaluation of this integral operator together with an inner product in the vector space \(\R \times \R^d \times \R^{d^2} = \R^{1 + d + d^2}\): 
%\xt{In your original definition of $l_k[\cN \mu]$, a set of coefficients $(c_i, b_i, A_i)_{i}$ is involved. I guess you tried to make the setup most general, which could include examples like the Monge-Ampere equation (which involves $\det(\nabla^2 u)$). }\kp{I think the example of Monge-Ampere was already covered by your initial formulation, I just renamed \(P\) to \(E\) and relaxed the differentiability with respect to \(x\), to cover discontinuous coefficients/forcing terms and general mixed BC. The formulation with the \(i\)'s is there to get a better estimate for the number of atoms. But I forgot to include the multiple \(i\) below, and have fixed that. }
For any \(x_k \in D\) we have for the \(i\)-th entry \(i = 1,\ldots,N_E\) of $l_k[\cN \mu]$ we have
\begin{align*}
l_{k,i}[\cN \mu] &=
(c_i(x_k),b_i(x_k),A_i(x_k)) : [\iota \cN \mu](x_k) \\
&= \int_\Omega c_i(x_k)\varphi(x_k;\omega) + b_i(x_k) \cdot \nabla_x \varphi(x_k,\omega) + \tr(A_i(x_k) \nabla_x^2\varphi(x_k,\omega)) \de \mu(\omega),
\end{align*}
and similarly for \(x_k \in \partial D\). 
Define the function under the integral as
\[
\widetilde{\varphi}_{k,i}(\omega) :=
\begin{cases}
c_i(x_k)\varphi(x_k;\omega) + b_i(x_k) \cdot \nabla_x \varphi(x_k,\omega) + \tr(A_i(x_k) \nabla_x^2\varphi(x_k,\omega)) & \text{for } k \leq K_1 \\
d_i(x_k)\varphi(x_k;\omega) + e_i(x_k) \cdot \nabla_x \varphi(x_k,\omega) & \text{for } k \leq K_1.
\end{cases}
\]
With this notation, it is easy to see that
\begin{equation}
\label{eq:dual_variable}
\bar{p}(\omega) =
\sum_{k=1}^{K} w_k r_k(l_k[\bar{u}]) (\nabla_l r_k(l_k[\bar{u}]))^T \widetilde{\varphi}_k(\omega)
\end{equation}
defines the dual variable for the optimality conditions.
%\kp{If this notation remains, we can introduce \(\iota\) and the extended kernel \(\widetilde{\varphi}\) already earlier.}
%\xt{This notation is only used once. Maybe we can leave it as it is?}

With this preparation, we can derive the first order necessary conditions in the form of a support condition on the dual variable.
\begin{proposition} 
The first order necessary conditions derived in Proposition~\ref{prop:first_order_nec} can be expressed with the dual variable~\eqref{eq:dual_variable} as the subdifferential inclusion $-\bar{p} \in \alpha \partial \norm{\bar{\mu}}_{M(\Omega)}$ or the support conditions 
\[
\abs{\bar{p}(\omega)} \leq \alpha
\quad \bar{p} = - \sgn \bar{\mu} \text{ on } \supp \bar{\mu}.
\]
\end{proposition}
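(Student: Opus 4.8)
The plan is to read the first order condition of \Cref{prop:first_order_nec} as a subdifferential inclusion for the convex functional $\mu \mapsto \alpha\norm{\mu}_{M(\Omega)}$, after rewriting its linear part as a $C_0(\Omega)$–$M(\Omega)$ duality pairing against the explicitly given function $\bar p$ from \eqref{eq:dual_variable}. The first ingredient I would record is the identity
\[
l_k[\cN\mu] = \int_\Omega \widetilde{\varphi}_k(\omega)\,\de\mu(\omega), \qquad \mu\in M(\Omega),
\]
i.e. that applying the linear operators $\cL_E,\cL_B$ followed by pointwise evaluation at $x_k$ commutes with integration against $\mu$; here $\widetilde{\varphi}_k = (\widetilde{\varphi}_{k,i})_i$ is the vector of functions defined in \Cref{subsec:optimality}. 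This is precisely the differentiation-under-the-integral fact already exploited in \eqref{eq:continuity}: under the standing assumption $s\geq d+2+\gamma$, \Cref{prop:smoothness_feature_function} gives $\partial_x^\beta\varphi(x_k;\cdot)\in C_0(\Omega)$ for all $\abs{\beta}\leq 2$, uniformly in $x_k\in\overline D$, so the interchange is legitimate and each component $\widetilde{\varphi}_{k,i}$ — being a finite linear combination of such $C_0(\Omega)$ functions with the bounded coefficients $c_i(x_k),b_i(x_k),A_i(x_k)$ (resp.\ $d_i(x_k),e_i(x_k)$) of $\cL_E,\cL_B$ — again lies in $C_0(\Omega)$.

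Next I would collapse the sum. Writing the scalar weights $a_k := w_k\, r_k(l_k[\bar{u}])\,(\nabla_l r_k(l_k[\bar{u}]))^T$, the identity above gives
\[
\sum_{k=1}^{K} a_k\, l_k[\cN(\mu-\bar{\mu})] = \int_\Omega \Big(\sum_{k=1}^{K} a_k\,\widetilde{\varphi}_k(\omega)\Big)\de(\mu-\bar{\mu})(\omega) = \pair{\bar p,\,\mu-\bar{\mu}},
\]
with $\bar p = \sum_k a_k\,\widetilde{\varphi}_k \in C_0(\Omega)$ being exactly \eqref{eq:dual_variable}. Substituting this into \Cref{prop:first_order_nec} yields $\pair{-\bar p,\,\mu-\bar{\mu}}\leq \alpha(\norm{\mu}_{M(\Omega)}-\norm{\bar{\mu}}_{M(\Omega)})$ for every $\mu\in M(\Omega)$, which is by definition the inclusion $-\bar p\in\alpha\,\partial\norm{\bar{\mu}}_{M(\Omega)}$. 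Finally I would translate this into the pointwise form via the characterization of $\partial\norm{\bar{\mu}}_{M(\Omega)}$ recalled in \Cref{subsec:optimality}: $v\in\partial\norm{\bar{\mu}}_{M(\Omega)}$ iff $\norm{v}_{C(\Omega)}\leq 1$ and $v=\sgn\bar{\mu}$ on $\supp\bar{\mu}$. Applied to $v=-\bar p/\alpha$ this gives $\abs{\bar p(\omega)}\leq\alpha$ on $\Omega$ together with $\bar p = -\alpha\,\sgn\bar{\mu}$ on $\supp\bar{\mu}$, which are the asserted support conditions.

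The only genuinely delicate point is the first step: confirming that $\bar p$ really belongs to $C_0(\Omega)$ (in particular that it vanishes as $\sigma\to 0$ and at the lateral boundary of $D_1$) and that the operator–integral interchange defining $l_k[\cN\mu]$ is valid — this is where the standing assumption $s\geq d+2+\gamma$ and the boundedness of the coefficient functions of $\cL_E,\cL_B$ are used, and it is the place where one should also check that $\supp\bar{\mu}$ and the quoted subdifferential characterization are unambiguous for a general $\bar{\mu}\in M(\Omega)$ on the locally compact $\Omega$. Everything downstream of that is routine convex subdifferential calculus.
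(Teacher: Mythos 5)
Your proposal is correct and follows essentially the same route as the paper: the paper's own justification is precisely the preceding derivation in Section~\ref{subsec:optimality}, namely rewriting the linear term of Proposition~\ref{prop:first_order_nec} as the duality pairing $\pair{\bar p,\mu-\bar\mu}$ with $\bar p$ from \eqref{eq:dual_variable} (using that $\widetilde{\varphi}_{k,i}\in C_0(\Omega)$ by Proposition~\ref{prop:smoothness_feature_function}) and then invoking the quoted characterization of $\partial\norm{\bar\mu}_{M(\Omega)}$. Your extra care about $\bar p\in C_0(\Omega)$ and the interchange of evaluation with integration is exactly where the paper's standing assumption $s\geq d+2+\gamma$ enters, and your conclusion $\bar p=-\alpha\sgn\bar\mu$ on $\supp\bar\mu$ is the correctly scaled form of the stated support condition.
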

The main practical use of the dual variable is that it can be computed for any given \(u\) and provides a way to check for non-optimality. If a node is found that violates the bound on the dual, this provides a descent direction to further decrese the regularized loss. 
\begin{proposition}[Boosting step]
\label{prop:boosting}
Define the dual variable \(p[u]\) associated to the variable \(u = \cN \mu\) as
 \begin{equation}
p[u](\omega) :=
\sum_{k=1}^{K} w_k r_k(l_k[u]) (\nabla_l r_k(l_k[u]))^T \widetilde{\varphi}_k(\omega).
\end{equation}
Let \(\hat{\omega}\) be a coefficient with \(\abs{p[u](\hat{\omega})} > \alpha\). Then, the Dirac delta function at \(\hat{\omega}\) with negative sign of \(p[u](\hat{\omega})\) associated solution perturbation
\[
\hat{u} := \cN(-\sgn(\abs{p[u](\hat\omega)}) \delta_{\hat\omega}) = -\sgn(\abs{p[u](\hat\omega)})  \varphi(\cdot;\hat\omega) 
\]
is a descent direction for~\eqref{eq:finite_min_compact} at \(u = \cN \mu\).
\end{proposition}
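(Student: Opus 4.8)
The plan is to verify that the directional derivative of the regularized objective in~\eqref{eq:finite_min_compact} along the ray $u + t\,\hat{u}$ (equivalently $\mu + t\,\hat{\mu}$ with $\hat{\mu} = -\sgn(p[u](\hat\omega))\,\delta_{\hat\omega}$) is strictly negative at $t = 0^{+}$. Write $\Phi(\mu) := \tfrac12\sum_{k=1}^K w_k\,(r_k(l_k[u_\mu]))^2 + \alpha\norm{\mu}_{M(\Omega)}$. Since the smooth data-fidelity part is differentiable (using \Cref{assu:firstorderdiff}, which guarantees $r_k$ is $C^1$ in its second argument and that $l_k[\cN\,\cdot\,]$ is a bounded linear functional on $M(\Omega)$ by \Cref{prop:neural_network_mapping}), and the total-variation norm is convex with a well-understood one-sided directional derivative, the chain rule gives
\[
\frac{d}{dt}\Big|_{t=0^+}\Phi(\mu + t\hat{\mu})
= \sum_{k=1}^K w_k\, r_k(l_k[u])\,(\nabla_l r_k(l_k[u]))^T l_k[\cN\hat{\mu}]
+ \alpha\, \frac{d}{dt}\Big|_{t=0^+}\norm{\mu + t\hat{\mu}}_{M(\Omega)}.
\]

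First I would handle the linear (first) term: by the definition of $p[u]$ and linearity of $l_k[\cN\,\cdot\,]$, it equals $\pair{p[u], \hat{\mu}} = -\sgn(p[u](\hat\omega))\,p[u](\hat\omega) = -\abs{p[u](\hat\omega)}$. Next I would bound the directional derivative of the total variation norm: for any $\nu \in M(\Omega)$ one has $\tfrac{d}{dt}|_{t=0^+}\norm{\mu + t\nu}_{M(\Omega)} \le \norm{\nu}_{M(\Omega)}$ (this follows from the triangle inequality, $\norm{\mu + t\nu} \le \norm{\mu} + t\norm{\nu}$, so the forward difference quotient is $\le \norm{\nu}$). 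With $\nu = \hat{\mu}$ a single signed Dirac of unit mass, $\norm{\hat{\mu}}_{M(\Omega)} = 1$, so the regularization contributes at most $\alpha$. Combining, the directional derivative is $\le -\abs{p[u](\hat\omega)} + \alpha < 0$ by the hypothesis $\abs{p[u](\hat\omega)} > \alpha$. Hence $\hat{u}$ is a descent direction, which is exactly the claim.

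There is essentially one technical point to be careful about rather than a deep obstacle: making the chain rule for the composite $t \mapsto \tfrac12\sum_k w_k (r_k(l_k[u_{\mu+t\hat\mu}]))^2$ rigorous. Here $u_{\mu+t\hat\mu} = u + t\,\hat{u}$ with $\hat u \in \cV(D) \hookrightarrow C^2(\overline D)$ by \Cref{prop:neural_network_mapping}, so $l_k[u + t\hat u] = l_k[u] + t\,l_k[\hat u]$ is affine in $t$, each $r_k$ is differentiable at $l_k[u]$, and the finite sum of smooth compositions is differentiable at $t=0$ with the stated derivative — no subtlety beyond a standard one-variable computation. The only genuinely one-sided object is the $M(\Omega)$-norm term, and there the inequality above is all that is needed; one does not even need the exact value of its directional derivative, only the upper bound $\norm{\hat\mu}_{M(\Omega)} = 1$. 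I would close by noting this is consistent with the subdifferential characterization in the preceding proposition: $\abs{p[u](\hat\omega)} > \alpha$ means $-p[u] \notin \alpha\,\partial\norm{\mu}_{M(\Omega)}$, i.e.\ the first order necessary condition fails, and $\hat{\mu}$ realizes a feasible descent direction witnessing this failure.
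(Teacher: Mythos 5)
Your proof is correct and follows exactly the argument the paper leaves implicit (the proposition is stated without proof, and Remark~\ref{rmk:discrete_dual} indicates precisely this directional-derivative computation): the fidelity term contributes $-\abs{p[u](\hat\omega)}$ by linearity of $\mu\mapsto l_k[\cN\mu]$ and the chain rule, while the triangle inequality bounds the one-sided derivative of the total-variation term by $\alpha\norm{\hat\mu}_{M(\Omega)}=\alpha$, giving a strictly negative directional derivative. You also correctly read $-\sgn(\abs{p[u](\hat\omega)})$ as the (evidently intended) $-\sgn(p[u](\hat\omega))$, without which the statement would not make sense.
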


%The next proposition gives a concrete interpretation of the dual variable for discrete $u_{c, w}$ in \eqref{eq:emperical_sparse_min_discrete}. 

\begin{remark} 
\label{rmk:discrete_dual}
By direct calculation, the dual variable $ p[u](\omega)$ admits the alternative expression
    \begin{equation}
        p[u](\omega) = \left. \frac{d}{d\tau} \left[ \hat{L} \left( u + \tau\varphi(\cdot; \omega) \right) \right] \right|_{\tau=0} ,\quad \omega\in \Omega.
    \end{equation}
This characterizes $p[u](\omega)$ as the directional derivative of the loss along the candidate feature function $\varphi(\cdot; \omega)$, quantifying the potential decrease in the loss if the node $\omega$ were added.    
This expression provides an intuitive explanation for Proposition \ref{prop:boosting}:  the directional derivative must be large enough (in absolute value) to outweigh the corresponding increase in the regularization penalty.
This observation forms the cornerstone of our optimization strategy, where kernel nodes are inserted and deleted dynamically. 
\end{remark}

\section{Algorithmic framework}
\label{sec:algorithm}

% \zs{Roughly define the theory behind optimization algorithm and define some key variables. The idea is to put a short overview of the algorithm in the main body and put the detailed version in Appendix.}

% Components of the algorithm:
% \begin{itemize}
%     \item Approximation of Hessian, Gauss-Newton method
%     \item Node insertion (Gradient boosting)
%     If the differential operator $\cP$ is nonlinear, we use its Jacobian to approximate $\cP$ at that state.
%     \item 
% \end{itemize}

We now present the algorithmic framework for solving PDEs by optimizing the empirical problem (\ref{eq:emperical_sparse_min_discrete}). Using the notation introduced earlier, we rewrite the optimization problem in the simplified form: 
\begin{equation}
\label{eq:discrete_sparse_min}
    \min_{c, \omega} \hat{L}(u_{c, \omega}) + \alpha\|c\|_{1} = \min_{c, \omega} \hat\ell(c,\omega) + \alpha \|c\|_{1}
\end{equation}
where the reduced loss is defined as
\[
\hat\ell(c,\omega) :=
\frac{1}{2} R(c,\omega)^{T} W R(c,\omega).
\]
Here the residue vector $R(c,\omega)\in \R^K$ is defined componentwise by  $[R(c, \omega)]_{k} = r_k(l_k(u_{c, \omega}))$. \revision{The weight matrix  $W = \text{diag}(w_{1, 1}, \cdots, w_{1, K_{1}}, \lambda w_{2, 1}, \cdots,  \lambda w_{2, K_{2}}) \in \R^{K\times K}$ is determined by chosen quadrature rule and the penalty parameter $\lambda$ and is fixed during optimization}. 
To incorporate the network width $N$ into the optimization process, we introduce insertion and deletion steps for kernel nodes, applied respectively before and after optimizing $(c, \omega)$ with fixed $N$.  
We therefore propose a three-phase algorithm that is executed consecutively and iteratively:
\begin{itemize}
\item Phase I inserts kernel nodes based on the gradient boosting strategy;
\item  Phase II optimizes $(c, \omega)$ with fixed  $N$ using a semi-smooth Gauss-Newton algorithm; 
\item Phase III removes kernel nodes whose associated outer weights are zero.
\end{itemize} 
The full procedure is presented in Algorithm \ref{alg:main}, with additional implementation details provided in \Cref{app:algo}. 

\begin{algorithm}[t]
    \SetAlgoLined
    \KwIn{Initial network $u^{(0)}$(defined by $c^{(0)}$, $\omega^{(0)}$) with width $N^{(0)}$}
    \KwOut{Trained network.}
    
    \While{$ t <  T $ }{
        \textbf{Phase I:} \textit{Kernel nodes insertion.}\\
        \Indp Sample (uniformly) $\{\hat{\omega}_{m}\}_{m=1}^{M} \subseteq\Omega$, let $\hat{p} =\max_{\hat{\omega}_{m}}|p(\hat{\omega}_{m})[u^{(t)}]|$ and  $\omega_{N^{(t)} + 1} = \arg\max_{\hat{\omega}_{m}}|p(\hat{\omega}_{m})[u^{(t)}]|$\\
        Compute $p_0 = \max_{1\leq n\leq N^{(t)}}|p(\omega^{(t)}_{n})[u^{(t)}]|$\\
        
        \If{$\hat{p} > \max \{\alpha, p_0\}$}{
            $N^{(t+1/2)} = N^{(t)} + 1$, $c^{(t+1/2)} = c^{(t)}\cup \{0\}$, $\omega^{(t+1/2)} = \omega^{(t)} \cup \{\omega_{N^{(t)} + 1}\}$
        }
        \Else{
            $N^{(t+1/2)} = N^{(t)}$, $c^{(t+1/2)} = c^{(t)}$, $\omega^{(t+1/2)} = \omega^{(t)}$
        }
        \Indm

        \textbf{Phase II:} \textit{Optimize $c^{(t+1/2)}, \omega^{(t+1/2)}$ simultaneously with a semi-smooth Gauss-Newton algorithm}\\
        \Indp Obtain $J_{R}$ and then $G$, $DG$ based on \eqref{eq:opt_root_finding}, \eqref{eq:opt_Hessian}, \eqref{eq:opt_hessian_approx}.\\
        Compute search direction $(z_{c}, z_{\omega}) = z = -DG^{-1}G$. \\ Perform a line search of optimal step size and update $c^{(t+1/2)}$, $\omega^{(t+1/2)}$.\\
        \Indm

        \textbf{Phase III:} \textit{Kernel nodes  deletion.}\\
        \Indp Remove nodes with $c^{(t+1/2)}_n = 0$ (resulting in $c^{(t+1)}, \omega^{(t+1)}$ with width $N^{(t+1)}$) \\ 
        \Indm
        
        $t = t + 1$
    }
    \caption{Main Algorithm}
    \label{alg:main}
\end{algorithm}

In the following subsections, we present the details of the kernel node insertion step, the semi-smooth Gauss–Newton algorithm, and other practical components of the proposed framework.

\subsection{Kernel nodes insertion via gradient boosting} 
In this subsection, we describe Phase I in the three-phase algorithm, which inserts new kernel nodes to enhance the approximation capacity of the model.
While adding more kernel nodes can improve the model's approximation capability, it also increases the computational cost of the optimization in Phase II. In the worst case, newly inserted nodes may be pruned in Phase III, rendering the added computational cost wasted. Therefore, new nodes should only be introduced when they are expected to yield the steepest descent in the loss function, and when further optimization over existing nodes is no longer effective.

According to \Cref{prop:boosting} and \Cref{rmk:discrete_dual}, the dual variable $p[u](\omega)$  provides a good estimate of the potential reduction in the objective function if a new kernel node is added at location $\omega$. In practice, we uniformly sample candidate locations from $\Omega$ and select the one with the largest $|p[u](\omega)|$ that also satisfies $|p[u](\omega)| > \alpha$. We then compare it with the dual variables of existing kernel nodes.  The selected node, denoted $\omega_{N+1}$, is inserted into the model $u = \sum_{i=1}^{N}c_{n} \varphi(\cdot; \omega_{n})$ if 
\begin{equation}
    \label{eq:insertion_threshold}
    |p[u](\omega_{N+1})| > \max_{1\leq n \leq N}  |p[u](\omega_{n})|  
\end{equation}
The newly added node is initialized with coefficient $c_{N+1} = 0$.

We note that this insertion strategy is inherently greedy, as it targets the direction of steepest local descent in the objective. It is therefore reasonable to incorporate heuristic modifications  to improve performance of the algorithm in practice. Those are discussed in detail in  \Cref{app:algo}.

\subsection{Semi-smooth Gauss-Newton}
The backbone of the three-phase algorithm is the optimization of $(c, \omega) \in \R^{N} \times \R^{N(d+1)} = \R^{N(d+2)}$ in Phase II. The first order optimality condition yields 
\begin{equation}
    \label{eq:opt_subdiff_inclusion}
    - \nabla \hat{\ell}(c, \omega) = - J_{R}(c, \omega)^{T}WR(c, \omega) \in \alpha \partial_{c, \omega} \|c\|_{1}
\end{equation}
where $J_{R}(c, \omega):= (\nabla_c{R}(c, \omega), \nabla_{\omega}R(c, \omega)) \in \R^{K \times N(d+2)}$ 
is the Jacobian and $\partial_{c, \omega}\|c\|_{1} := \partial_{c} \|c\|_{1} \times \{0_{\omega}\in \R^{N(d+1)}\}$ is the extended subdifferential. Instead of using a standard (proximal) gradient descent method, we apply a second order semismooth Newton-type method.  

To this end, we utilize the proximal operator of $\alpha \|c\|_{1}$, defined as
\begin{equation}
\begin{aligned}
\begin{aligned}
    \text{Prox}_{\alpha \|\cdot\|_1}(x) &= \argmin_{y} \left[ \frac{1}{2} \|x - y\|_{2}^{2} + \alpha \|y\|_{1} \right] \\
    &= \left( \text{sign}(x_i) \max\{|x_i| - \alpha, 0\} \right)_{i=1}^{n}.
\end{aligned}
\end{aligned}
\end{equation}
For notational simplicity, we denote $ \text{Prox} :=  \text{Prox}_{\alpha \|\cdot\|_1}$ in remainder of the text.
Using the proximal operator, the subdifferential inclusion \eqref{eq:opt_subdiff_inclusion} is equivalent a root-finding problem: 
find $(q, \omega)$ such that  $c = \text{Prox}(q)$ and 
\begin{equation}
\label{eq:opt_root_finding}
    G(q, \omega) = (q - \text{Prox}(q), 0) + \nabla \hat{\ell}(\text{Prox}(q), \omega)  = 0.  
\end{equation}
$G$ is referred to as the Robinson normal map, introduced by~\cite{robinson1992normal}. 
To address the non-differentiability introduced by the proximal operator, the generalized derivative of $G$ is calculated in a semismooth context \citep{pieper2015finite}, given by
\begin{equation}
\label{eq:opt_Hessian}
    DG(q, \omega) = (\text{Id} - DP(q, \omega)) + \nabla^{2}\hat{\ell}(\text{Prox}(q), \omega) DP(q, \omega)
\end{equation}
where $DP = (D\operatorname{Prox},\ \operatorname{Id})$ is the semismooth derivative of the map $(q, \omega) \rightarrow (\text{Prox}(q), \omega)$. 
The root-finding problem \eqref{eq:opt_root_finding} is then solved via descent direction $z = -DG^{-1}G$, and a line search strategy is applied to determine the step size.
In practice, we approximate the Hessian of the loss by 
\begin{equation}
\label{eq:opt_hessian_approx}
    \nabla^{2} \hat{\ell}(c, \omega) \approx J_{R}(c, \omega)^{T}W J_{R}(c, \omega),
\end{equation}
which is standard in Gauss-Newton type methods \citep{wright2006numerical}.

Note that, due to the introduction of Robinson variables, we need to set \(q_{N+1}\) for a newly inserted point in the greedy/boosting step. To obtain \(c_{N+1} = 0\) and a guarantee that the variable can be immediately activated with a small stepsize,  $q_{N+1}$ is set to be $-\alpha\text{sign}(p[u](\omega_{N+1}))$. Moreover, we adopt the convention that \(D\text{Prox}(\pm \alpha) = 1\).

We remark that the optimization problem is in general nonconvex, which results both from when $r_{k}$ is nonlinear in $l_{k}$, as is the case with nonlinear PDEs, and also the optimization in the inner weights. 
Moreover, the second order optimization algorithm exhibits better efficiency than first order methods. We refer readers to~\citet{pieper2015finite} for a complete discussion of semi-smooth calculus involved here, and additional technical details are discussed in \Cref{app:algo}.

\subsection{Computational cost}
The computational cost of our method primarily arises from evaluating the Jacobian $J_{R}$ as well as dual variables $p$ for sampled weights $\omega$. At each iteration, the computational cost for computing these scales as $\cO(NK)$ and $\cO(MK)$ respectively. Given the dependencies on number of collocation points $K$, this cost can be effectively reduced by subsampling a minibatch of collocation points at each iteration, which is widely adopted in the training of data-driven models. 

We note that the computational cost is well controlled by our adaptive kernel node insertion and deletion strategy, where the network width $N$ is increased only when necessary. However, $N$ may still become undesirably large when approximating complex solutions \revision{(e.g. high-dimensional solutions without localized features or low effective dimensions)}. This issue becomes more obvious in high-dimensional problems as the number of parameters to optimize also scales with the dimension $d$. To mitigate this, one may choose, based on their dual variables, only a subset of $\{\omega_n\}_{n=1}^{N}$ during each iteration to update.

In this work, however, we do not employ such acceleration techniques and instead use the most basic version of the algorithm.

\revision{
\subsection{Choice of hyperparameters: penalty weight $\lambda$ and regularization parameter $\alpha$}
In practice, the hyperparameters $\alpha$ and $\lambda$ have a significant impact on solution quality. The parameter $\alpha$ controls the strength of the $\ell_1$ regularization and thus influences the compactness of the learned representation. Larger values of $\alpha$ promote sparser kernel representations, whereas smaller values allow more kernels and may improve sparsity at the risk of overfitting. For this reason, we often adopt a continuation strategy in $\alpha$: we first train with a moderately large value to obtain a stable sparse representation, and then gradually decrease $\alpha$ while initializing from the previous solution (see \Cref{sec:experiments}).

The penalty weight $\lambda$ balances enforcement of boundary conditions against the interior PDE residual. If $\lambda$ is too small, boundary constraints may be under-enforced, resulting in noticeable boundary errors. Conversely, overly large $\lambda$ may bias the optimization toward boundary fitting, slowing convergence and potentially degrading overall accuracy. In practice, we choose $\lambda$ so that the boundary penalty term and the interior residual term remain comparable in magnitude during training. Additional treatments of boundary conditions are discussed in the next subsection.

We also note that both parameters are inherently related to the number of collocation points $K$. As $K$ increases, the empirical approximations of both the interior residual and boundary penalty terms become more accurate, and the sensitivity to the precise choice of $\alpha$ and $\lambda$ is typically reduced. While the analysis in \Cref{sec:theory} clarifies the roles of these parameters, we do not claim a universal scaling law with respect to $K$. Designing automated strategies for tuning $\alpha$ and $\lambda$ remains an interesting direction for future work.
}
% \xt{Please see the new Remark 20. How should we properly refer to the scaling?}
% \kp{I am not sure. The new Remark establishes an a~priori scaling, which clearly depends on the dimension \(d\). This could be a worst-case scaling law, which may be too pessimistic. On the other hand, if we use fixed quasi-uniform points, the form of the scaling and the dimension dependence seems quite realistic.}

\subsection{Treatment of boundary conditions}
\label{subsec:bnd_treat}
We note that our method does not strictly enforce the boundary condition but includes it as a penalty term in the objective. which is empirical estimation of $\|\cB[u_{c, \omega}]\big|_{\partial D}\|^{2}_{L^{2}(\partial D)}$. While widely adopted, the choice of $L^{2}$-norm is largely heuristic and not always theoretically appropriate. In particular, for problems with Dirichlet boundary conditions, it is more natural to consider a boundary norm consistent with the regularity of the exact solution. For example, if Dirichlet data $u\big|_{\partial D}\in H^{1/2} \subsetneq L^{2}(\partial D)$, as is the case when $u \in H^{1}(D)$, merely penalizing boundary misfit via $L^{2}(\partial D)$ may be insufficient to enforce the boundary condition with the appropriate level of regularity. A stricter penalty term such as $\|u\big|_{\partial D}\|^2_{H^1(\partial D)}$ can better incorporate theoretical requirements of the problem and lead to improved numerical performance.

Even with an appropriate choice of norm, the penalty-based approach remains suboptimal. Since it relaxes the boundary condition rather than enforcing it exactly, tuning the penalty parameter $\lambda$ becomes \revision{as mentioned in the above subsection.}\revision{
% \st{While a larger $\lambda$ can improve boundary accuracy and theoretically lead to a better solution, it often slows convergence and may be catastrophic to the obtained solution by overfitting the boundary condition.}
} Considering this, alternative approaches to enforcing Dirichlet boundary conditions beyond the penalty method are of great interest \citep{lagaris1998artificial,SUKUMAR:2022}.
For the homogeneous Dirichlet case, the solution can be parameterized as
\begin{equation}
\label{eq:homogenizor}
  u(x) \approx \gamma(x) \sum_{n=1}^N c_n \varphi(x; \omega_n) 
\end{equation}
where $\gamma \in C^2_0(D)$ with $\gamma(x) > 0$ for \(x \in D\) is a twice continuously differentiable function that vanishes on the boundary $\partial D$. This formulation preserves the theoretical guarantees discussed earlier, provided that  $u(x)/\gamma(x)$ satisfies the required regularity. 
More generally, the method works for any Dirichlet boundary function $f:\partial D \rightarrow \R$ under the assumption that $f$ admits an extension $\bar{f}: D\rightarrow \R, \bar{f}\big|_{\partial D} = f$ that is twice continuously differentiable. Since such an extension may only exist for smooth domains with $f$ at least twice continuously differentiable, this limits this approach in practice to functions $f$ where such an extension can be easily constructed.
% \kp{I have rewritten this to add the $C^2$ requirement.}\zs{You're right, thanks for pointing out. The extension needs to $C^{2}$ to be used. I think in practice this is not a problem since such extension is obtained by interpolation/data-driven methods anyway. We may consider moving these details into the appendix as a future research directions as we haven't done any implementation yet.}
We may then set $u(x) = f(x) + \gamma(x)\cN(x)$. The extension function $\bar{f}$ and the vanishing function $\gamma$ can often be constructed in low-dimensional domains with simple geometry. However, for complex or high-dimensional geometries, finding such functions becomes significantly more challenging. While there are interesting data-driven approaches, their investigation lies beyond the scope of this work and is left for future research.

% \xt{TODO: we may add discussions on $\lambda$ here}\zs{Moved this to the previous sub-section}.

\subsection{Additional implementation components}
We discuss a few additional components for practical convenience here.
% \subsubsection{Constraining $\omega$}
\subsubsection{\texorpdfstring{Constraining $\omega$}{Constraining omega}}
During each Gauss–Newton step, the parameter $\omega = (y, \sigma)$ may fall outside the prescribed parameter space $\Omega$. To ensure that $\sigma$ remains within  $\Omega_{\sigma} = (0, \sigma_{\max}]$, we introduce a parameterization 
\begin{equation}
\label{eq:sigma_param}
\begin{aligned}
\sigma(s) = \sigma_{\min} + (\sigma_{\max} - \sigma_{\min}) \tau(s),
\end{aligned}
\end{equation}
where $\tau$ is any increasing bijection from $\R$ to $(0, 1)$, typically chosen as the sigmoid function $\tau(s) = 1/ (1 + \exp(-s))$. The lower bound $\sigma_{\min}$ is usually set to a small positive value (e.g. $10^{-3}$) rather than $0$ to safeguard against numerical instability caused by excessively small $\sigma$, though in practice $\sigma$ remains well above this lower bound during training. Occasional out-of-bounds $y$ is not problematic and rarely observed in practice.

\subsubsection{Stopping criterion} The iteration in Algorithm \ref{alg:main} may be terminated when neither the optimization of existing kernel nodes nor the insertion of new nodes results in a significant descent of the loss function. Please see \Cref{app:algo} for more details.

\section{Numerical experiments}
\label{sec:experiments}

In this section, we conduct numerical experiments to demonstrate the effectiveness and versatility of the proposed method. We begin with a semilinear Poisson equation (example in \Cref{subsec:1.3}) as a simple testbed of the proposed method, including exploratory studies on higher-dimensional PDEs and treatment of boundary constraints. Next, we use our method to solve a regularized Eikonal equation, where we also investigate the convergence behavior of numerical solutions to the unique viscosity solution. Finally, we consider a spatial-temporal viscous Burgers’ equation, where our method is coupled with a time discretization scheme. 

We begin by outlining some common settings and evaluation metrics used across all experiments.

\subsection*{Settings and evaluation metrics}

In all numerical experiments, we use the following generic settings:
\begin{itemize}
    \item We set $s = d + 2 + 0.01$ in the feature function, i.e., 
    \[
        \varphi(x;\omega) = \frac{\sigma^{2.01}}{\left(\sqrt{2\pi}\right)^d} 
    \exp\left(- \frac{\norm{x - y}^2_2}{2 \sigma^2}\right).
    \]
    This fulfills the requirement for theory developed in \Cref{sec:theory}.
    \item We set $\tau(s) = 1/(1+\exp(-s))$ and $(\sigma_{\min}, \sigma_{\max}) = (10^{-3}, 1)$ in all experiments in the parameterization of $\sigma$ given by \eqref{eq:sigma_param}. 
    \item $M = 10^{4}$ candidates feature functions will be sampled from $\Omega$ in Phase I of each iteration. 
    \item We use an empty network as the initialization of Algorithm \ref{alg:main} unless otherwise specified.
\end{itemize}
In addition, all algorithmic components are implemented in Python using JAX\footnote{JAX is used here mainly for auto-differentiation, yet all derivatives can also be directly implemented from their analytical forms.}, executed on a 2023 Macbook Pro with Apple M2 Pro chip (16GB memory) using CPU-only computation. Training times typically range from 30 seconds to several minutes. In all the following experiments, double-precision (float64) arithmetic is used. However, the method is, in general, compatible with single-precision (float32) computations as well.

We evaluate the numerical solution $\hat{u}$ using several metrics (see \Cref{tab:metrics_summary}). These include the $L^2$ and $L^\infty$ errors with respect to the true solution $u$, both estimated on a finer test grid. To assess the impact of the regularization term in preventing overfitting, we also report the empirical loss $\hat{L}$, computed on both the training collocation points and the test grid. In addition, we record the number of kernels used in the solution to indicate the level of sparsity.

\begin{table}[t]
\centering
\caption{Evaluation Metrics}
\begin{tabular}{cl}
\toprule
\textbf{Metric} & \textbf{Definition} \\
\midrule

$L^2$ error  &
$L^{2}$ error between numerical and exact solutions estimated on test grid. \\
$L^\infty$ error  &
Maximum absolute error between numerical and exact solutions on test grid. \\
$\hat{L}$ (Train) &
Empirical loss evaluated on training collocation points. \\
$\hat{L}$ (Test) &
Empirical loss evaluated on test grid.  \\
\#Kernels &
Number of kernels/feature functions used in the numerical solution. \\
\bottomrule
\end{tabular}
\label{tab:metrics_summary}
\end{table}

% \footnote{Code and data that allow readers to reproduce the results in this paper are available at \href{https://github.com/EddyShao/SparseKernelPDE}{https://github.com/EddyShao/SparseKernelPDE} \xt{Zihan, I suggest we publish the code after this work is published/accepted.}}\zs{Ok. Thanks}. 

\subsection{Semilinear Poisson equation}
\label{subsec:twobumps}
We use a semilinear Poisson equation in \eqref{eq:semilinear} as a simple testbed for our method. We begin with a 2D equation where the exact solution features two steep bumps. We then test our approach in a 4-dimensional problem to explore its capability in handling higher-dimensional equations. Finally, we discuss treatment of boundary conditions, including the mask function technique introduced in the \Cref{subsec:bnd_treat}. In all following numerical experiments, we prescribe $D = [-1, 1]^{d} \subseteq [-2, 2]^{d} = \Omega$, where $d$ is $2$ or $4$.

\subsubsection{2D equation} 
% We prescribe exact solution $u: D\subseteq\R^{2}\rightarrow \R$ as
% \begin{equation}
%      u(x)  = v(x; \ c_1, k_1, R_1) + v(x; \ c_2, k_2,  R_2)
% \end{equation}
% where
% \begin{equation}
%     \begin{aligned}
%         v(x; \ c, k, R) &= \tanh\left( k \left( R_0 - \| x - c \|_2 \right) \right) + 1, \\
%         \text{with} \quad
%         \left\{
%         \begin{aligned}
%             c_1 = -c_2 &= (0.3,\ 0.3), \\
%             (k_1,\ k_2) &= (4,\ 12), \\
%             (R_1,\ R_2) &= (0.30,\ 0.15)
%         \end{aligned}
%         \right.
%     \end{aligned}
% \end{equation}
% and $f_\cE$ and $f_{\cB}$ in the formulation of Example in \Cref{sec:theory} are computed accordingly.

We prescribe the exact solution $u : D \subseteq \mathbb{R}^2 \rightarrow \mathbb{R}$ as
\begin{equation}
u(x) = v(x; c_1, k_1, R_1) + v(x; c_2, k_2, R_2),
\end{equation}
where 
% \begin{equation}
\begin{align*}
v(x; c, k, R) = \tanh\left( k \left( R_0 - | x - c | \right) \right) + 1, \\\\
\left\{
\begin{aligned}
c_1 &= (0.3,\ 0.3), \\
c_2 &= -c_1, \\
(k_1,\ k_2) &= (4,\ 12), \\
(R_1,\ R_2) &= (0.30,\ 0.15)
\end{aligned}
\right. .
\end{align*}
% \end{equation}
The source terms $f_{\mathcal{E}}$ and $f_{\mathcal{B}}$ in the formulation of the example in \Cref{sec:theory} are then computed accordingly. Here $v$ is constructed as a bump with center at $c\in\R^{2}$ and steepness parameterized jointly by $R$ and $k$. The exact solution $u$ then possesses two bumps with distinct steepness (see Figure \ref{fig:two_bump_exact}). In particular, the multi-scale steepness poses difficulties for the GP method, where the position of the RBF kernel is prescribed with a fixed shape parameter. To see this, we tested GP method with different shape parameter ($\sigma$ in $\kappa(x, x') = \exp(- \frac{\|x - x'\|_{2}^{2}}{2 \sigma^{2}})$) and observed significant volatility in the results (see Figure \ref{fig:two_bump_GP}). Moreover, different kernel widths might attain maximum absolute error at the center of $2$ steepness bumps, respectively, indicating that the single fixed kernel shape parameter $\sigma$ faces the challenges of handling distinct steepness simultaneously. 

\begin{figure}[t]
    \centering
    % First row (3 figures)
    \begin{subfigure}[t]{0.40\textwidth}
        \includegraphics[width=\textwidth]{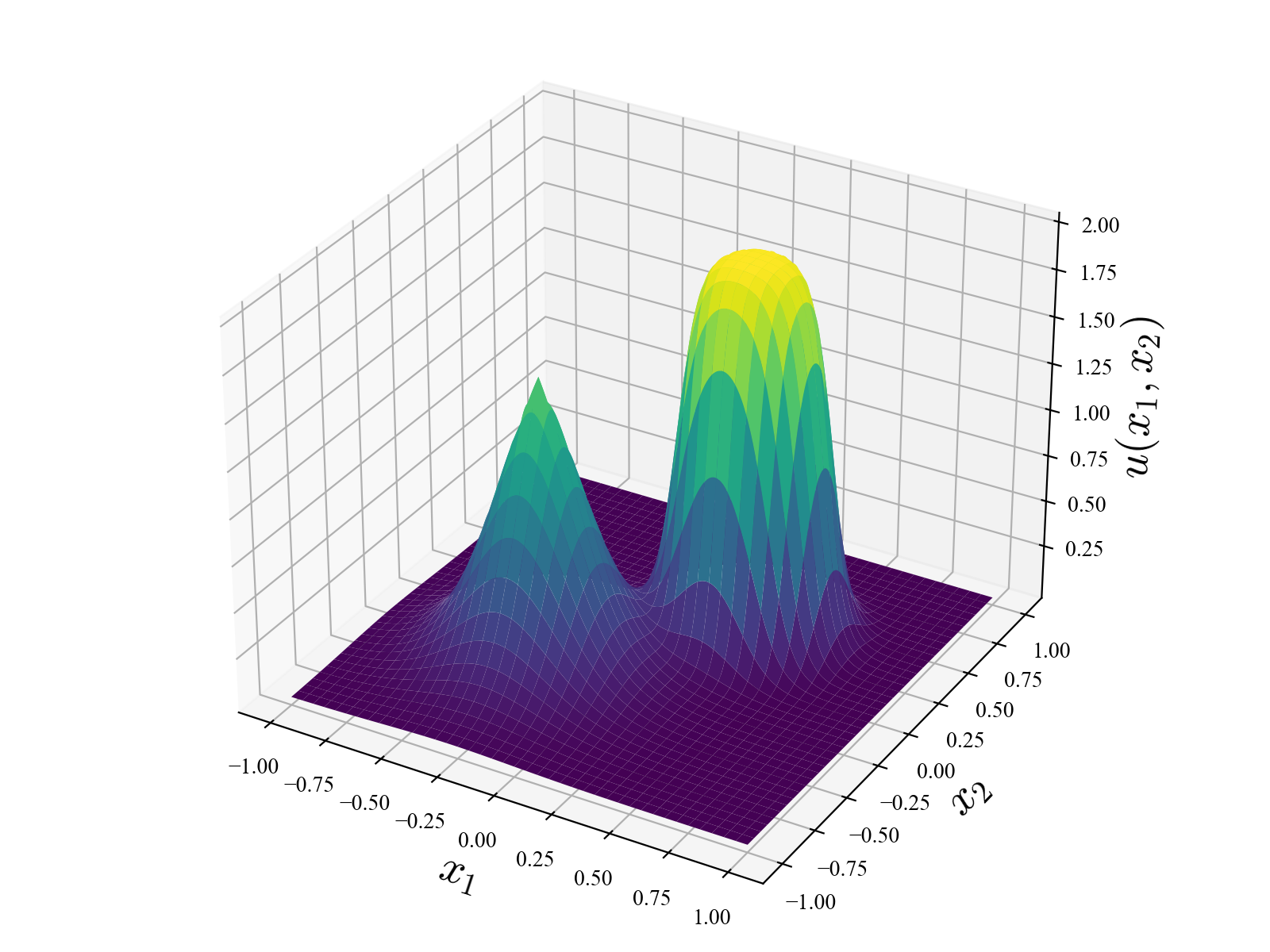}
        % \caption{Exact solution}
        \caption{}
        \label{fig:two_bump_exact}
    \end{subfigure}
    % \hfill
    \begin{subfigure}[t]{0.55\textwidth}
        \includegraphics[width=\textwidth]{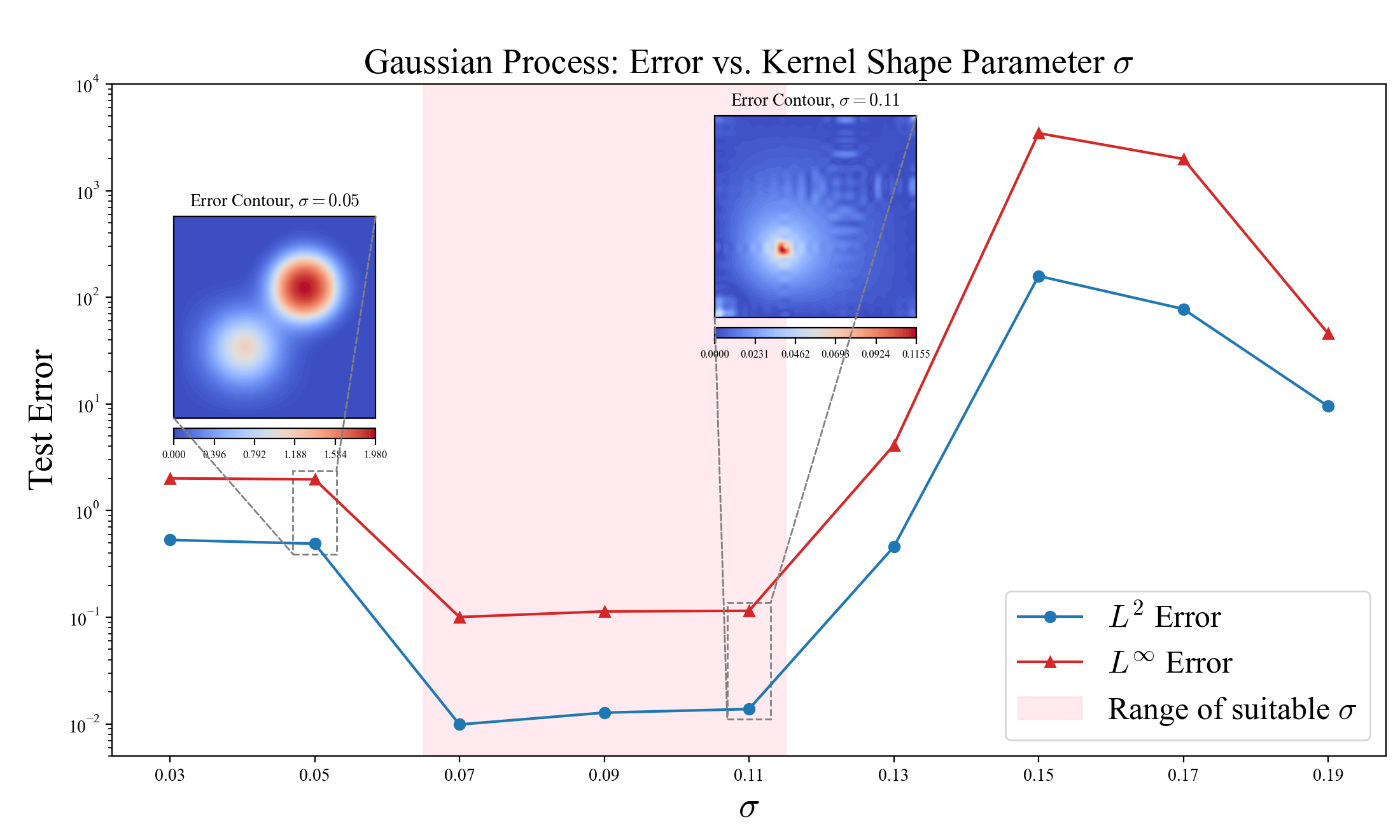}
        \caption{}
        % \caption{Error of solutions obtained by  the GP method with different kernel shape parameters. }
        \label{fig:two_bump_GP}
    \end{subfigure}
    % \hfill
    % \begin{subfigure}[b]{0.35\textwidth}
    %     \includegraphics[width=\textwidth]{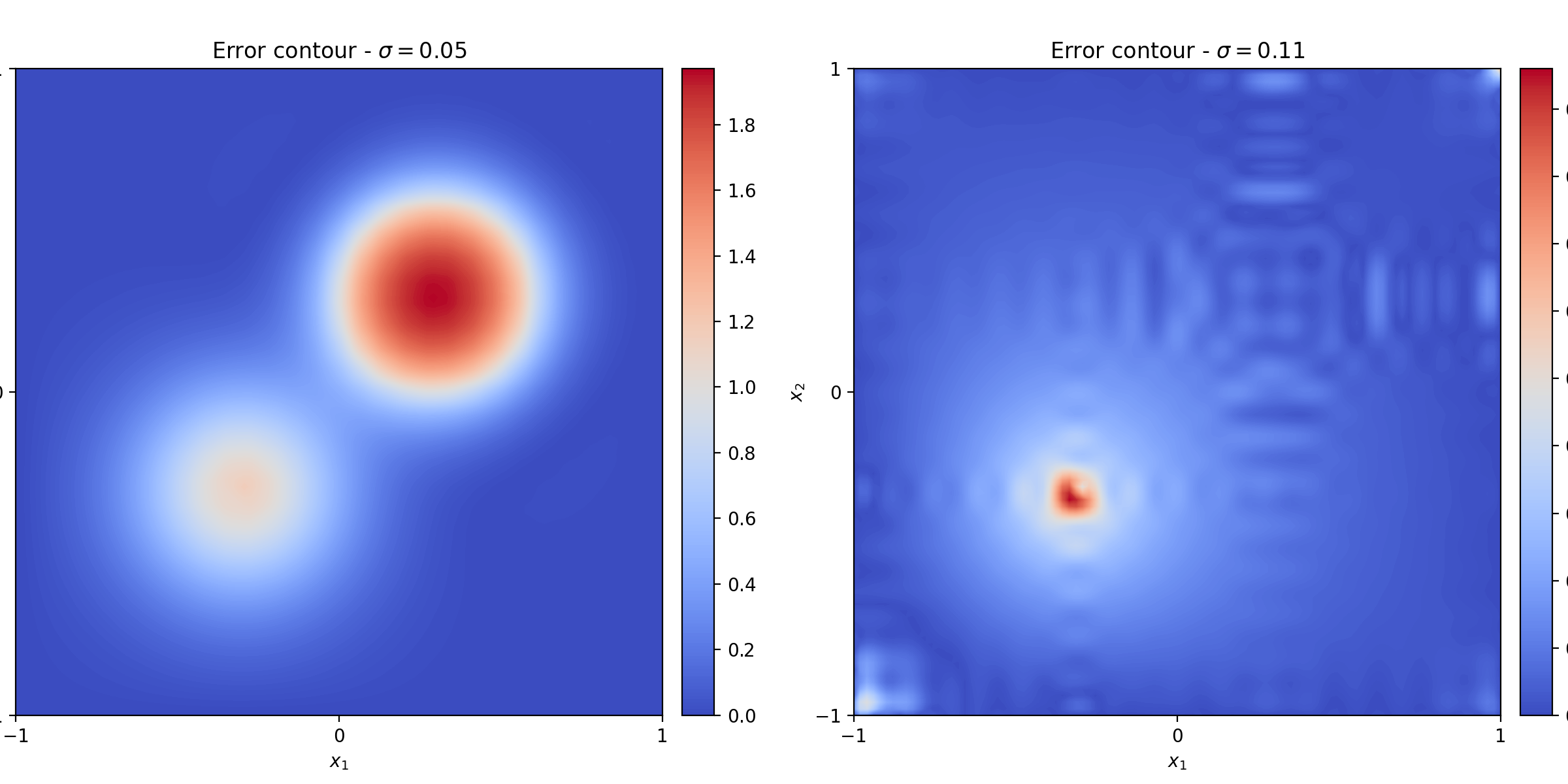}
    %     \caption{Caption 3}
    % \end{subfigure}
    \vskip 1em
    % Second row (2 centered figures)
    \begin{subfigure}[t]{0.42\textwidth}
        \includegraphics[width=\textwidth]{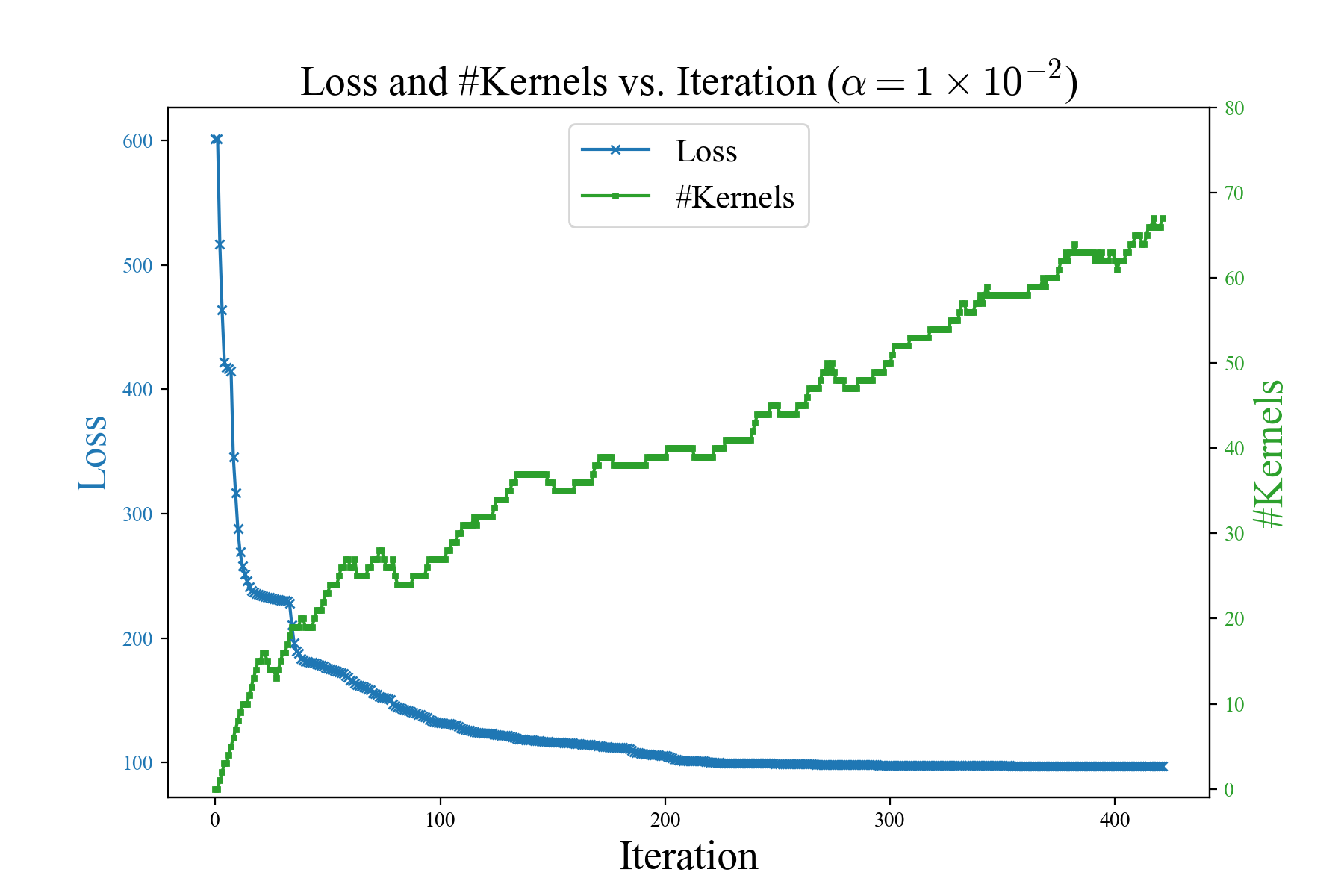}
        % \caption{Convergence of loss and growth of number of kernels ($\alpha = 10^{-2}$).}
        \caption{}
        \label{fig:two_bump_convergence}
    \end{subfigure}
    % \hfill
    \begin{subfigure}[t]{0.56\textwidth}
        \includegraphics[width=\textwidth]{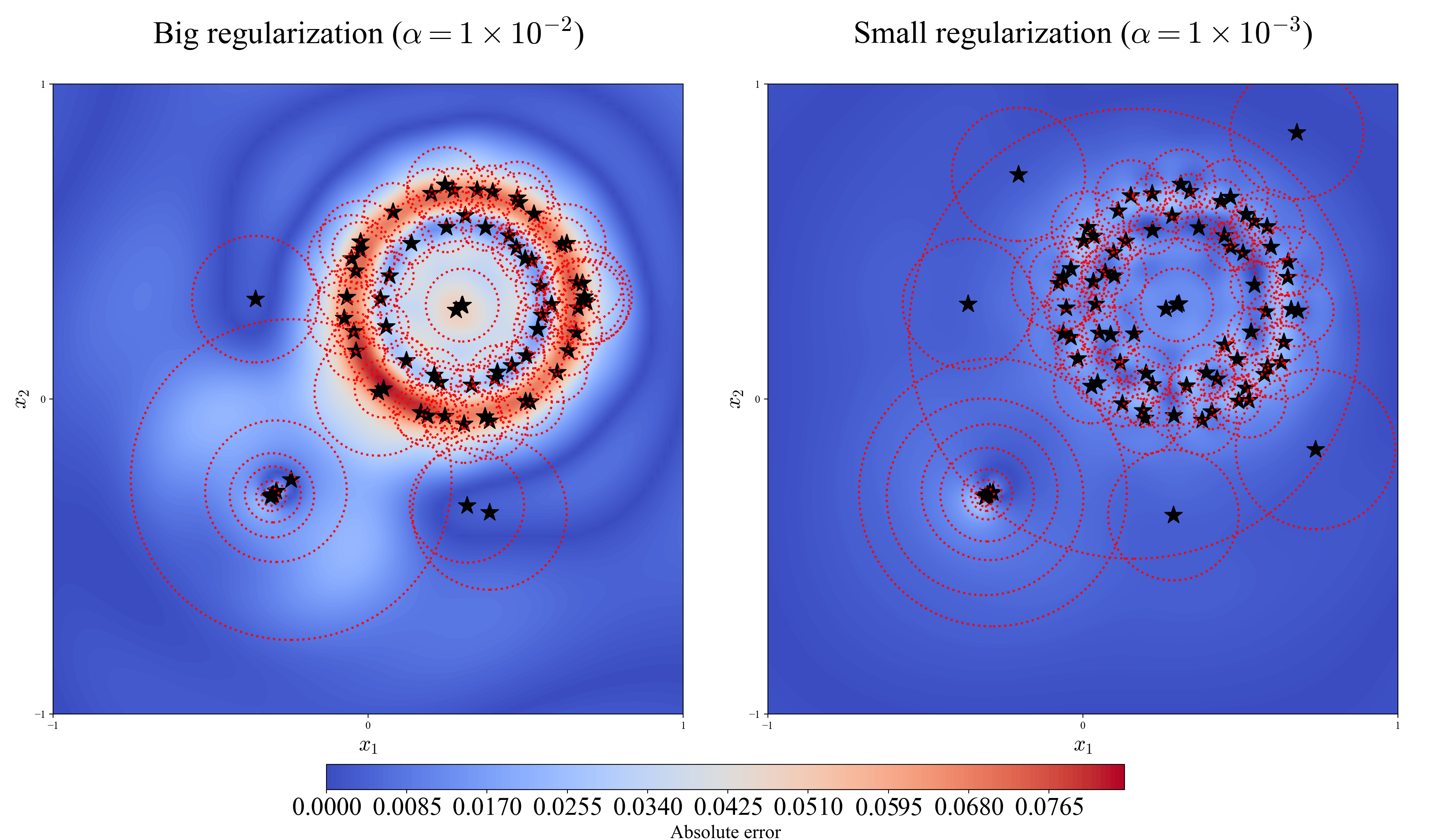}
        % \caption{Error contour of proposed method. Each $\star$ represents the location parameters $y_n$ of learned kernel nodes, and the dotted circle around which are of radius $\sigma_n$ (very few kernel nodes outside of $D$ are omitted).}
        \caption{}
        \label{fig:two_bump_contour}
    \end{subfigure}

    \caption{Numerical results using Sparse RBFNet (our) and Gaussian Process (GP) on solving a semilinear Poisson equation: (a) Exact solution; (b) Error of solutions obtained by  the GP method with different kernel shape parameters; (c) Convergence of loss and growth of number of kernels (\revision{regularization parameter} $\alpha = 10^{-2}$); (d) Error contours of our method. Each $\star$ represents the location parameter $y_n$ of a learned kernel node, and the dotted circle around has radius $\sigma_n$ (very few kernel nodes outside of $D$ are omitted). }
    \label{fig:semilinear_twobump}
\end{figure}

We then solve the PDE using proposed method with penalty parameter $\lambda = 10^3$ in the formula of $\hat{L}$ to enforce the boundary condition and regularization parameter $\alpha=10^{-2}, 10^{-3}$. Convergence of loss and growth of the number of kernels are shown in \Cref{fig:two_bump_convergence}. In practice, we solve case with $\alpha = 10^{-3}$ using the solution obtained with $\alpha=10^{-2}$ as initialization. This strategy largely saves computational time and results in better performance occasionally (see \Cref{app:pretraining} for more details).
% \footnote{Case with $\alpha = 10^{-3}$ is initialized as the results of $\alpha = 10^{-2}$ case \zs{Do we need to say it in the main body? This is necessary for this example but not used in the rest two examples.}}. 
The error contours are shown in \Cref{fig:two_bump_contour}. We also visualized the positions and shape parameters of learned kernel nodes $\{\omega_{n}\}$, observing that most of the kernels cluster at the location of the sharp transition. This shows our algorithm can capture localized structure and sharp transitions around the bumps. 

We further test the proposed method with a wider range of $\alpha$ and number of collocation grid points $K$. We compare the results with the GP method with a range of kernel shape parameter $\sigma$, evaluated with metrics listed in \Cref{tab:metrics_summary}. Results are shown in \Cref{tab:two_bumps}. We see that our method effectively finds a sparse and accurate solution without searching optimal $\sigma$ as necessary in Gaussian Process method. Meanwhile, decreasing $\alpha$ does not always lead to improved solution accuracy, especially when collocation points are relatively sparse in the domain. In such cases, the empirical loss $\hat{L}$ on the test grid is drastically larger than its estimate on the collocation points, suggesting overfitting. This shows the significant role of the regularization term in preventing overfitting, in addition to obtaining a sparse representation of solutions.

\begin{table}[t]
\centering
\caption{Errors, residual loss, and number of kernels for Sparse RBFNet and GP using grid collocation points. Case with $\alpha = 10^{-3}$($10^{-4}$) is solved with solution obtained by $\alpha=10^{-2}$($10^{-3}$) as initialization. Test results are evaluated on a $100\times 100$ grid in $D$. All results are averaged over 10 runs.}
\setlength{\tabcolsep}{4pt}
\begin{tabular}{ccccc@{\hskip 18pt}ccc}
\toprule
& & \multicolumn{3}{c}{\textbf{Sparse RBFNet}} & \multicolumn{3}{c}{\textbf{Gaussian Process}} \\
\cmidrule(lr{2em}){3-5} \cmidrule(lr{0.15em}){6-8}
\shortstack{$K$ \\ $(K_1, K_2)$} & Metric
& $\alpha=1\text{e}{-2}$ & $\alpha=1\text{e}{-3}$ & $\alpha=1\text{e}{-4}$
& $\sigma=0.05$ & $\sigma=0.10$ & $\sigma=0.15$ \\
\specialrule{0.9pt}{1pt}{1pt}
\multirow{6}{*}{\shortstack{$20$ \\ $(324, 76)$}} 
& $L^2$ error        & $1.05\text{e}{-1}$ & $8.94\text{e}{-2}$ & $8.42\text{e}{-2}$ & $5.29\text{e}{-1}$ & $3.89\text{e}{-2}$ & $3.62\text{e}{-2}$\\
& $L^\infty$ error   & $2.52\text{e}{-1}$ & $2.23\text{e}{-1}$ & $2.24\text{e}{-1}$ & $2.02\text{e}{+0}$ & $2.64\text{e}{-1}$ & $2.41\text{e}{-1}$ \\
& $\hat{L}$(train)   & $4.78\text{e}{+0}$ & $7.18\text{e}{-2}$ & $1.69\text{e}{-3}$ & -- & -- & -- \\
& $\hat{L}$(test)    & $6.56\text{e}{+1}$ & $5.89\text{e}{+1}$ & $5.82\text{e}{+1}$ & -- & -- & -- \\
& \#Kernels          & 46  & 65  & 86  & 400 & 400 & 400 \\
\midrule
\multirow{6}{*}{\shortstack{$30$ \\ $(784, 116)$}} 
& $L^2$ error        & $4.99\text{e}{-2}$ & $1.98\text{e}{-2}$ & $1.77\text{e}{-2}$ & $4.89\text{e}{-1}$ & $1.32\text{e}{-2}$ & nan \\
& $L^\infty$ error   & $9.04\text{e}{-2}$ & $5.35\text{e}{-2}$ & $4.87\text{e}{-2}$ & $1.96\text{e}{+0}$ & $1.14\text{e}{-1}$ & nan \\
& $\hat{L}$(train)   & $1.50\text{e}{+1}$ & $2.41\text{e}{-1}$ & $6.00\text{e}{-3}$ & -- & -- & -- \\
& $\hat{L}$(test)    & $1.77\text{e}{+1}$ & $6.81\text{e}{+0}$ & $6.88\text{e}{+0}$ & -- & -- & -- \\
& \#Kernels          & 68  & 83  & 102  & 900 & 900 & 900 \\
\midrule
\multirow{6}{*}{\shortstack{$50$ \\ $(2304, 196)$}} 
& $L^2$ error        & $4.37\text{e}{-2}$ & $1.22\text{e}{-2}$ & $1.08\text{e}{-2}$ & $6.40\text{e}{-3}$ & nan & $6.53\text{e}{-1}$  \\
& $L^\infty$ error   & $8.47\text{e}{-2}$ & $4.51\text{e}{-2}$ & $4.52\text{e}{-2}$ & $7.07\text{e}{-2}$ & nan & $1.81\text{e}{+0}$ \\
& $\hat{L}$(train)   & $1.82\text{e}{+1}$ & $5.57\text{e}{-1}$ & $3.73\text{e}{-2}$ & -- & -- & -- \\
& $\hat{L}$(test)    & $1.75\text{e}{+1}$ & $2.15\text{e}{+0}$ & $1.59\text{e}{+0}$ & -- & -- & -- \\
& \#Kernels          & 72  & 104 & 125 & 2500 & 2500 & 2500\\
\specialrule{1pt}{1pt}{1pt}
\end{tabular}
\label{tab:two_bumps}
\end{table}

We remark that grid collocation points are used in the above experiments. Using random collocation points negatively affect the performance of both methods (See \Cref{tab:two_bumps_uniform} in \Cref{app:collocation}). In general, the performance of our method is unfortunately sensitive to the uniformity of the collocation points. This negative impact is attributed to the increased gap between $\nu_{D}^{K_1}$, $\nu_{\partial D}^{K_2}$ and  $\nu_{D}$, $\nu_{\partial D}$, which further corrupts the empirical residual loss.

% \kp{We explain this by the smaller size of gaps in between the residual collocation points in the regular grids.}

\subsubsection{4D equation}
We now consider the same equation in 4D as an exploratory example of solving higher-dimensional problems with the proposed method. We set $D=[-1, 1]^{4}\subseteq [-2, 2]^{4}$ and exact solution $u$ be prescribed as 
\begin{equation}
    u(x) = \prod_{d=1}^{4} \sin(\pi x_d)
\end{equation}
with source term and boundary conditions computed accordingly. 

We fix $\lambda=3000$ use grid collocation points $K = 6^4, 8^4$ with $\alpha = 10^{-3}, 10^{-4}$ respectively. The results are shown in Table \ref{tab:highdim}. While obtaining sparse and accurate solutions, our method effectively prevents overfitting. When there is not a sufficient number of collocation points (e.g., $K = 1296$), decreasing regularization parameter $\alpha$ might lead to severe overfitting ($\hat{L}_{\text{test}} \gg \hat{L}_{\text{train}}$). This results in degraded accuracy even though significantly more kernel nodes are inserted compared to the case with a larger $\alpha$.

We highlight several key advantages of our method for solving equations in high-di\-mensional spaces compared with other methods.  First, by employing a shallow network with smooth feature functions instead of a multi-layer perception (MLP) as is often used in other PINN-based methods, all required derivatives can be computed analytically. This is especially beneficial when computing higher-order derivatives, as it eliminates the need to construct large computational graphs for automatic differentiation or to rely on Monte Carlo-based estimations, thereby reducing both memory usage and computational cost \citep{sirignano2018dgm}. Second, our method offers greater flexibility compared to GP methods. In a GP formulation, the number of trainable/optimizable parameters scales as $\cO(K)$, resulting in substantial computational cost for operations such as Cholesky factorization of a matrix of size $\cO(K) \times \cO(K)$. Since a large number of collocation points $K$ is typically required to learn complex solutions in high-dimensional spaces, this poses a significant computational bottleneck and requires the use of mini-batch techniques~\citep{yang2023mini, tran2020stochastic}. In contrast, our method inserts new kernel nodes/feature functions only when necessary, thereby keeping the number of trainable (optimizable) parameters manageable, usually much fewer than collocation points. Lastly, the regularization term in our method effectively prevents overfitting, which is particularly important when collocation points are relatively sparse in the domain, as this is often the case in high-dimensional problems. \revision{We emphasize that, however, these advantages are most pronounced  where the solution admits additional low-complexity structure (e.g., localized features or low effective dimensionality), so that accurate approximations can be achieved with relatively few active kernels. As with other kernel-based methods, we do not claim to overcome the curse of dimensionality in fully generic unstructured settings.}

% \begin{table}[h]
% \centering
% \caption{Training and testing errors for a high-dimensional Poisson equation. Test results are evaluated on grid of $20^4$ in $D$.}
% \setlength{\tabcolsep}{4pt}
% \begin{tabular}{ccccc}
% \toprule
% $K$ $(K_1, K_{2})$ & \multicolumn{2}{c}{$1296$ $(256, 1040)$} & \multicolumn{2}{c}{$4096$ $(1296, 2800)$} \\
% \cmidrule(lr){2-3} \cmidrule(lr){4-5}
% $\alpha$ & $10^{-3}$ & $10^{-4}$ & $10^{-3}$ & $10^{-4}$ \\
% \midrule
% $L^2$ error      & $1.74\text{e}{-1}$ & $2.14\text{e}{-1}$ & $1.46\text{e}{-1}$ & $5.98\text{e}{-2}$ \\
% \addlinespace[4pt]
% $L^\infty$ error  & $2.50\text{e}{-1}$ & $4.48\text{e}{-1}$ & $1.87\text{e}{-1}$ & $8.17\text{e}{-2}$ \\
% $\hat{L}$(train)  & $4.10\text{e}{+0}$ & $3.99\text{e}{-1}$ & $7.01\text{e}{+0}$ & $9.83\text{e}{-1}$ \\
% $\hat{L}$(test)   & $9.89\text{e}{+0}$ & $1.47\text{e}{+1}$ & $6.19\text{e}{+0}$ & $1.40\text{e}{+0}$ \\
% \#Kernels          & $192$ & $296$ & $221$ & $379$ \\
% \bottomrule
% \label{tab:highdim}
% \end{tabular}
% \end{table}

\begin{table}[t]
\centering
\caption{Errors and residual loss of numerical solutions to a 4D semilinear Poisson equation using grid collocation points. Test results are evaluated on a grid of $20^4$ in $D\subseteq \R^{4}$. All results are averaged over 10 runs.}
\setlength{\tabcolsep}{4pt}
\begin{tabular}{ccccc}
\toprule
$K$ $(K_1, K_{2})$ & \multicolumn{2}{c}{$1296$ $(256, 1040)$} & \multicolumn{2}{c}{$4096$ $(1296, 2800)$} \\
\cmidrule(lr){2-3} \cmidrule(lr){4-5}
$\alpha$ & $10^{-3}$ & $10^{-4}$ & $10^{-3}$ & $10^{-4}$ \\
\midrule
$L^2$ error      & $1.75\text{e}{-1}$ & $2.31\text{e}{-1}$ & $1.45\text{e}{-1}$ & $5.71\text{e}{-2}$ \\
$L^\infty$ error & $2.39\text{e}{-1}$ & $4.55\text{e}{-1}$ & $1.90\text{e}{-1}$ & $7.69\text{e}{-2}$ \\
$\hat{L}$(train) & $3.93\text{e}{+0}$ & $3.60\text{e}{-1}$ & $7.06\text{e}{+0}$ & $9.19\text{e}{-1}$ \\
$\hat{L}$(test)  & $1.03\text{e}{+1}$ & $1.78\text{e}{+1}$ & $6.27\text{e}{+0}$ & $1.30\text{e}{+0}$ \\
\#Kernels        & $173$ & $239$ & $204$ & $365$ \\
\bottomrule
\label{tab:highdim}
\end{tabular}
\end{table}

\subsubsection{Boundary conditions treatments} 
\label{subsec:num_bnd_treat}
We now offer a numerical experiment on the discussion of the treatment of boundary conditions in \Cref{subsec:bnd_treat}.
\begin{figure}[t]
    \centering
    % First subfigure: 1/4 width
    \begin{subfigure}[t]{0.45\textwidth}
        \centering
        \includegraphics[width=\textwidth]{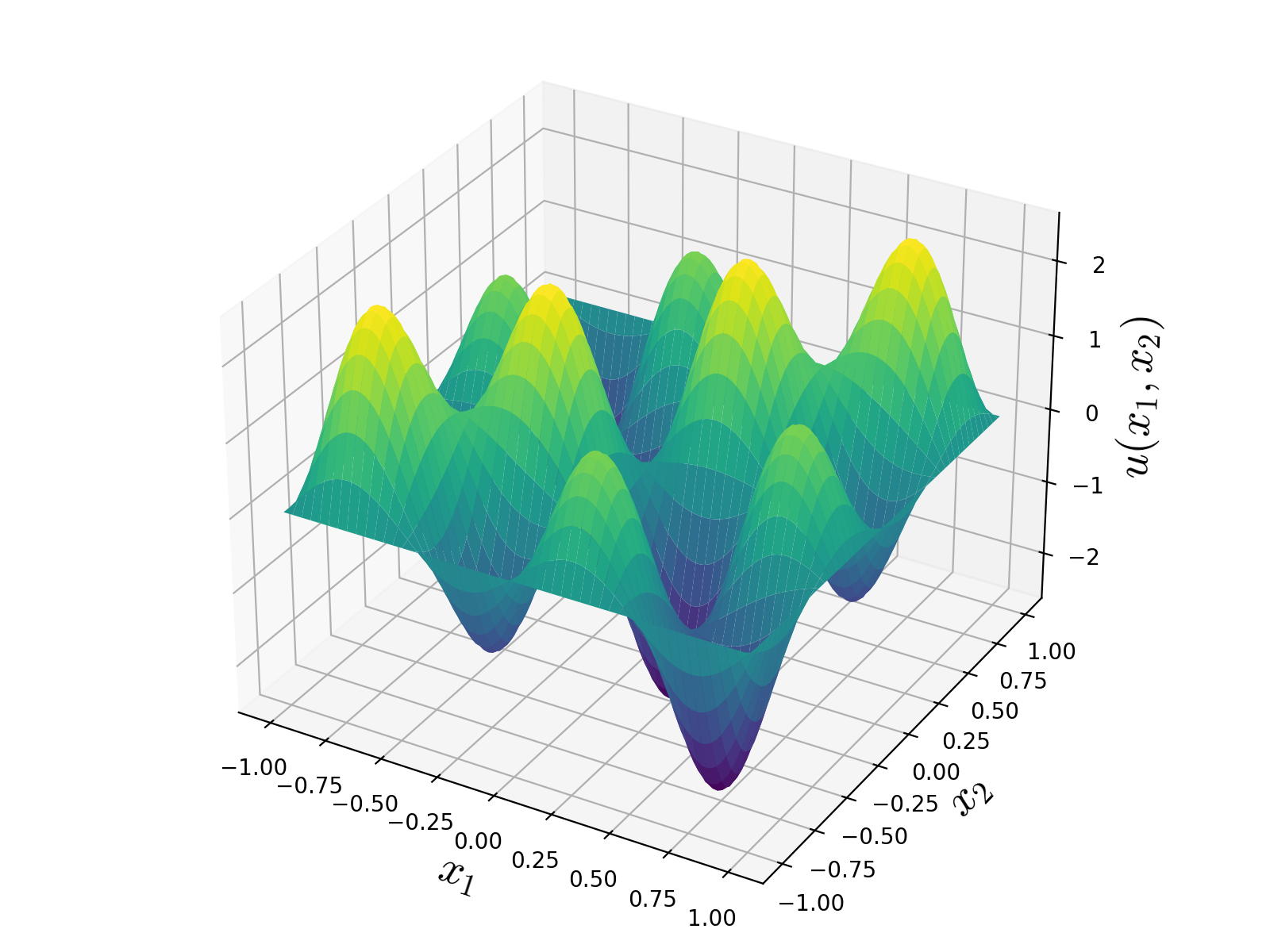}
        % \caption{Exact solution $u(x_1, x_2)$}
        \caption{}
        \label{fig:sines_exact}
    \end{subfigure}
    \hspace{0.03\textwidth}
    % Second subfigure: 1/4 width
    \begin{subfigure}[t]{0.50\textwidth}
        \centering
        \includegraphics[width=\textwidth]{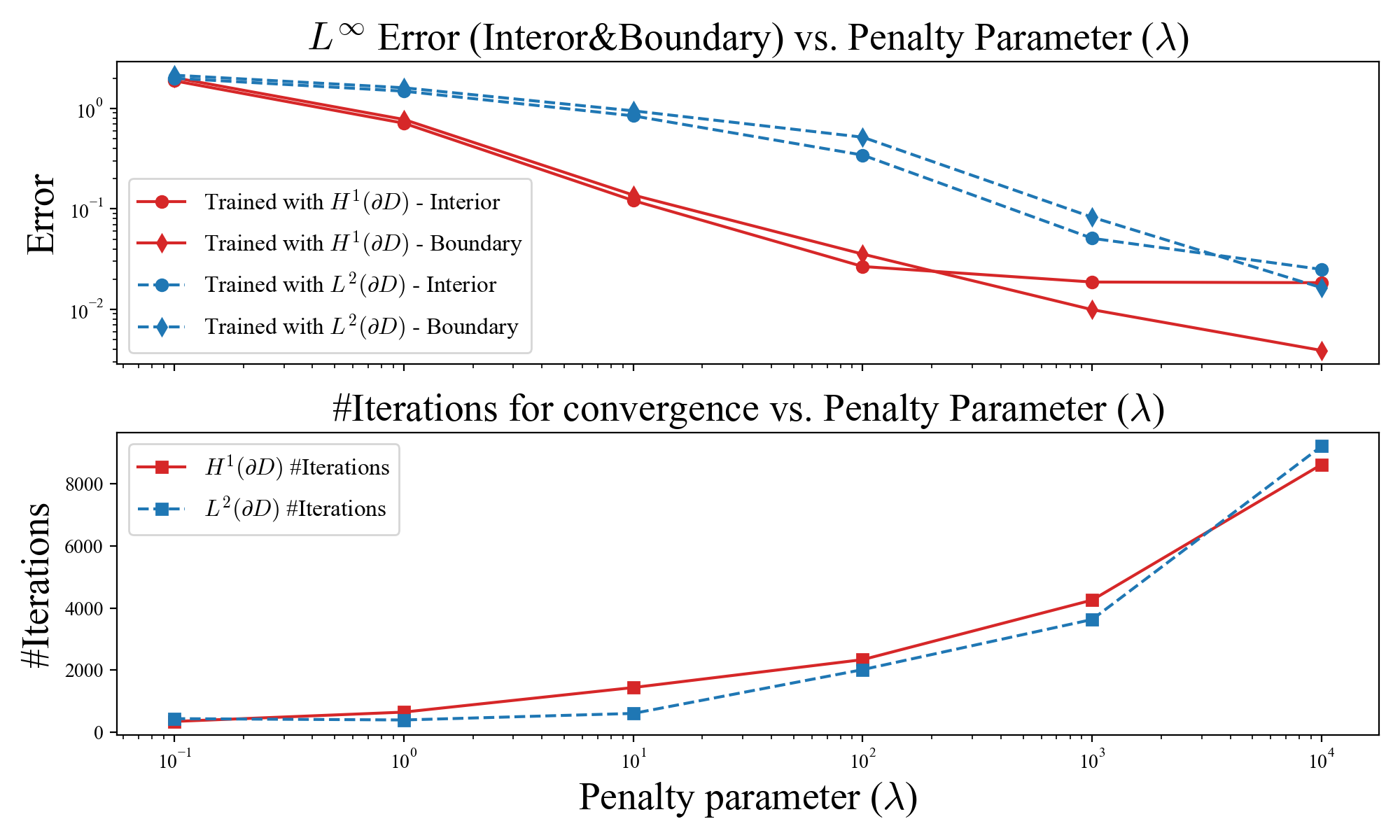}
        % \caption{Error and convergence speed - penalty coefficient $\lambda$ (results are averaged from 10 runs) }
        \caption{}
        \label{fig:sines_error_lambda}
    \end{subfigure}
    % Third subfigure: 1/2 width
    \vskip 1em
    \begin{subfigure}[t]{0.99\textwidth}
        \centering
        \includegraphics[width=\textwidth]{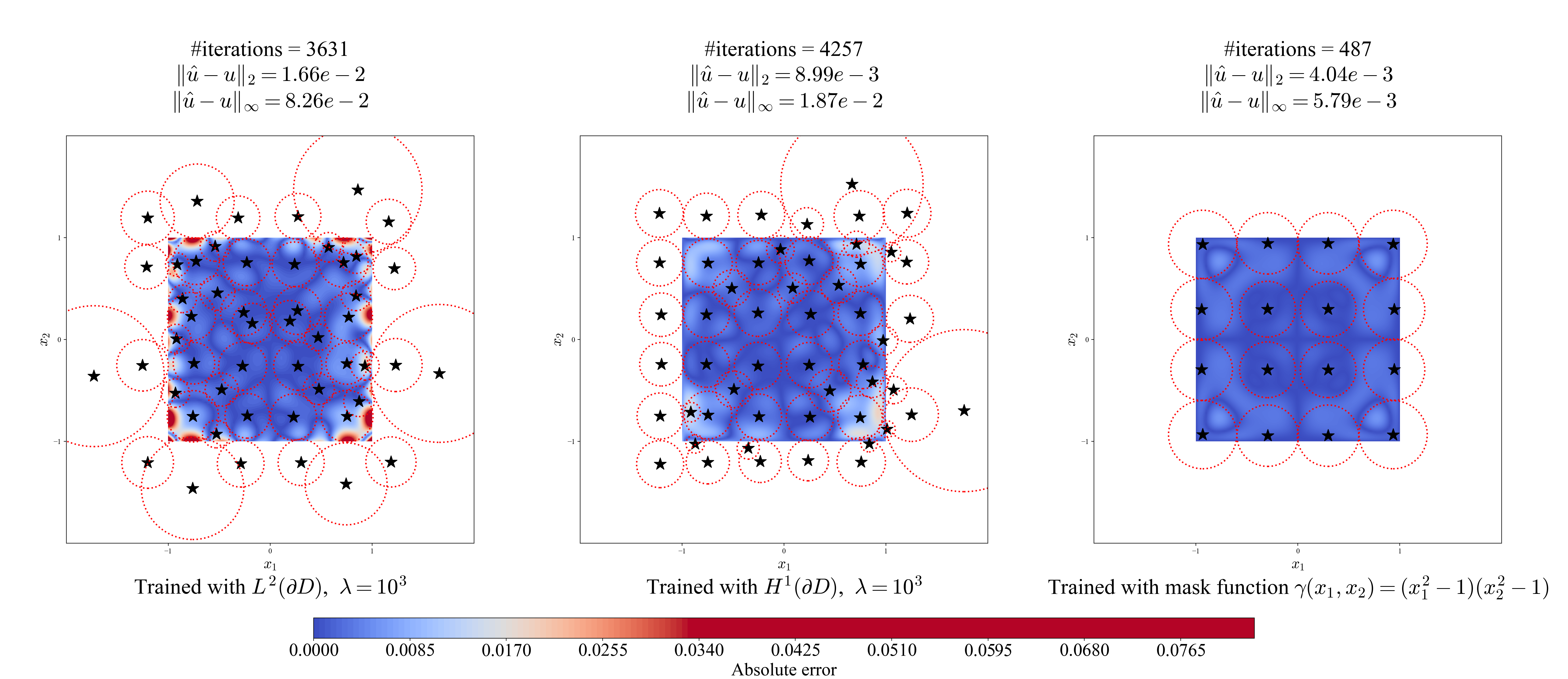}
        % \caption{Error contour of solutions with/without ``mask" function (one realization). Errors and iterations shown are averaged from 10 runs.}
        \caption{}
        \label{fig:error_contour_sines}
    \end{subfigure}
    \caption{Numerical results of different treatments of boundary conditions. (a) Exact solution $u(x_1, x_2)$; (b) Error and convergence speed -- penalty coefficient $\lambda$ (results are averaged from 10 runs); (c) Error contour of solutions with $L^{2}$-type loss, $H^{1}$-type loss, and ``mask" function for boundary conditions. Each $\star$ represents the location parameter $y_n$ of a learned kernel node, and the dotted circle around has radius $\sigma_n$. The contour plots are taken from a single run, while the reported number of iterations, $L^2$ error and $L^{\infty}$ error are averaged over 10 runs.}
    \label{fig:three-wide}
\end{figure}
We now consider the same equation in $2D$ but with exact solution prescribed as 
\[
 u(x_1, x_2) = \sin(\pi x_1) \sin(\pi x_2) + 2 \sin(2\pi x_1) \sin(2\pi x_2).
\]
This function was selected for its nontrivial boundary behavior (see \Cref{fig:sines_exact}), making it a suitable testbed for various boundary condition treatments. We set $\alpha = 10^{-3}$, $K = 400$ grid points and solve the equation with $\lambda  = 10^{-1}, 10^{0}, 10^{1}, 10^{2}, 10^{3}, 10^{4}$. We use both the $L^{2}(\partial D)$-type loss and $H^{1}(\partial  D)$-type loss mentioned in \Cref{subsec:bnd_treat} to enforce boundary conditions. As is shown in \Cref{fig:sines_error_lambda}, increasing $\lambda$ leads to higher-quality solutions. Notably, solutions trained with $H^{1}$-type loss achieve comparable accuracy to $L^{2}$-type loss with a much smaller $\lambda$, which is of practical significance since a larger $\lambda$ requires substantially more iterations for convergence. 

We remark that a larger $\lambda$ does not always lead to improved solutions in practice, even when sufficient iterations are allowed for convergence. A key contributing factor is the potential overfitting to boundary data, as the regularization effect of the $\alpha \|c\|_1$ term may be diminished when $\lambda$ becomes excessively large. This suggests that the number of boundary collocation points $K_2$ may need to be increased in proportion to $\lambda$ to maintain a proper balance between boundary data fitting and regularization. To address this challenge, the ``mask" function trick introduced in \Cref{subsec:bnd_treat} can solve this difficulty as it avoids using a penalty method at all.
% but directly enforce homogeneous boundary value. 
With $\gamma$ in \eqref{eq:homogenizor} set to be $\gamma(x) = (x_1^{2} - 1)(x_2^{2} - 1)$, we obtain a more sparse and more accurate solutions within much fewer iterations (see Figure~\ref{fig:error_contour_sines}).

\subsection{Eikonal equation}
We consider the regularized Eikonal equation with homogeneous Dirichlet boundary condition
\begin{equation*}
\begin{aligned}
      |\nabla u(x)|^{2} + \epsilon \upDelta u(x) &= f^{2}(x) \quad&\forall x\in D \\
    u(x) &= 0 \quad&\forall x\in \partial D
\end{aligned}
\end{equation*}
where $D = [-1, 1]^{2} \subseteq[-2, 2]^{2} = \Omega$, $f\equiv 1$,  and $\epsilon \in \R_{+}$. In this case, $\cE[u] = |\nabla u| + \epsilon \upDelta u - f^{2}$ and $\cB[u] = u$. We first set $\epsilon = 0.1$ and use $K = 30^{2}$ ($K_1 = 784$, $K_2 = 116$) grid collocation points, with $\alpha = 10^{-4}$ and $\alpha = 10^{-6}$ respectively.
% The convergence of Algorithm \ref{alg:main} is shown in \Cref{fig:eikonal_convergence} and 
Resulting error contours\footnote{Exact solution is obtained using finite difference method with transformation $u = -\epsilon \log v$~\citep[Same as][]{chen2021solving} with $2000$ uniform grids in each dimension of $D$.} are shown in \Cref{fig:error_contour_eikonal}. Our method obtains a highly sparse representation of the solution, with kernel nodes concentrated along the diagonals and the center where the solution undergoes sharp transitions. Results with a wider range of $K$ and $\alpha$ are shown in \Cref{tab:Eikonal}. 
\begin{table}[t]
\centering
\small
\setlength{\tabcolsep}{4pt}
\caption{Errors and residual loss of numerical solutions to the regularized Eikonal equation ($\epsilon = 0.10$) using grid collocation points with spatial mask $\gamma(x)=(1 - x_1^2)(1 - x_2^2)$. Test results are evaluated on a $100\times 100$ grid in $D$. All results are averaged over 10 runs.}
\begin{tabular}{c@{\hskip 6pt}ccc@{\hskip 8pt}ccc@{\hskip 8pt}ccc}
\toprule
& \multicolumn{3}{c}{$K = 400$ $(324, 76)$}
& \multicolumn{3}{c}{$K = 900$ $(784, 116)$}
& \multicolumn{3}{c}{$K = 2500$ $(2304, 196)$} \\
\cmidrule(lr){2-4} \cmidrule(lr){5-7} \cmidrule(lr){8-10}
Metric & $10^{-4}$ & $10^{-6}$ & $10^{-8}$
       & $10^{-4}$ & $10^{-6}$ & $10^{-8}$
       & $10^{-4}$ & $10^{-6}$ & $10^{-8}$ \\
\midrule
$L^2$ error
& $1.94\text{e}{-2}$ & $2.97\text{e}{-3}$ & $3.99\text{e}{-3}$
& $1.94\text{e}{-2}$ & $1.43\text{e}{-3}$ & $8.71\text{e}{-4}$
& $1.95\text{e}{-2}$ & $1.20\text{e}{-3}$ & $1.91\text{e}{-4}$ \\
$L^\infty$ error
& $3.04\text{e}{-2}$ & $6.50\text{e}{-3}$ & $7.43\text{e}{-3}$
& $2.98\text{e}{-2}$ & $5.71\text{e}{-3}$ & $1.90\text{e}{-3}$
& $2.97\text{e}{-2}$ & $5.28\text{e}{-3}$ & $1.28\text{e}{-3}$ \\
$\hat{L}$(train)
& $9.40\text{e}{-3}$ & $2.66\text{e}{-4}$ & $1.30\text{e}{-6}$
& $1.10\text{e}{-2}$ & $5.13\text{e}{-4}$ & $1.35\text{e}{-5}$
& $1.24\text{e}{-2}$ & $7.85\text{e}{-4}$ & $6.01\text{e}{-5}$ \\
$\hat{L}$(test)
& $1.39\text{e}{-2}$ & $3.07\text{e}{-3}$ & $3.91\text{e}{-3}$
& $1.36\text{e}{-2}$ & $1.67\text{e}{-3}$ & $5.70\text{e}{-4}$
& $1.36\text{e}{-2}$ & $1.23\text{e}{-3}$ & $1.94\text{e}{-4}$ \\
\#Kernels
& 15 & 61 & 81
& 14 & 71 & 89
& 13 & 88 & 115 \\
\bottomrule
\end{tabular}
\label{tab:Eikonal}
\end{table}

We then check the convergence towards the unique viscosity solution as $\epsilon \rightarrow 0$. The unique viscosity solution in this case is $u_{\text{visc}} = \dist(x, \partial D) = \min(1 - |x_1|, 1 -|x_2|)$. We solve the Eikonal equation with $\epsilon = 0.5, 0.1, 0.05, 0.01$ respectively with $K = 30^2$ grid collocation points with mask function $\gamma(x) = (1-x_1^{2})(1 - x_2^{2})$ and $\alpha=10^{-6}$. We observe that our method shows excellent convergence behavior as $\epsilon$ approaches $0$ (see \Cref{fig:eikonal_viscos}). As the solution becomes increasingly non-smooth near the diagonals, kernel nodes adaptively cluster in these regions, capturing the sharp transition of the solution.

\begin{figure}[tb]
    \centering
    % First subfigure: 1/4 width
    \begin{subfigure}[t]{0.38\textwidth}
        \centering
        \includegraphics[width=\textwidth]{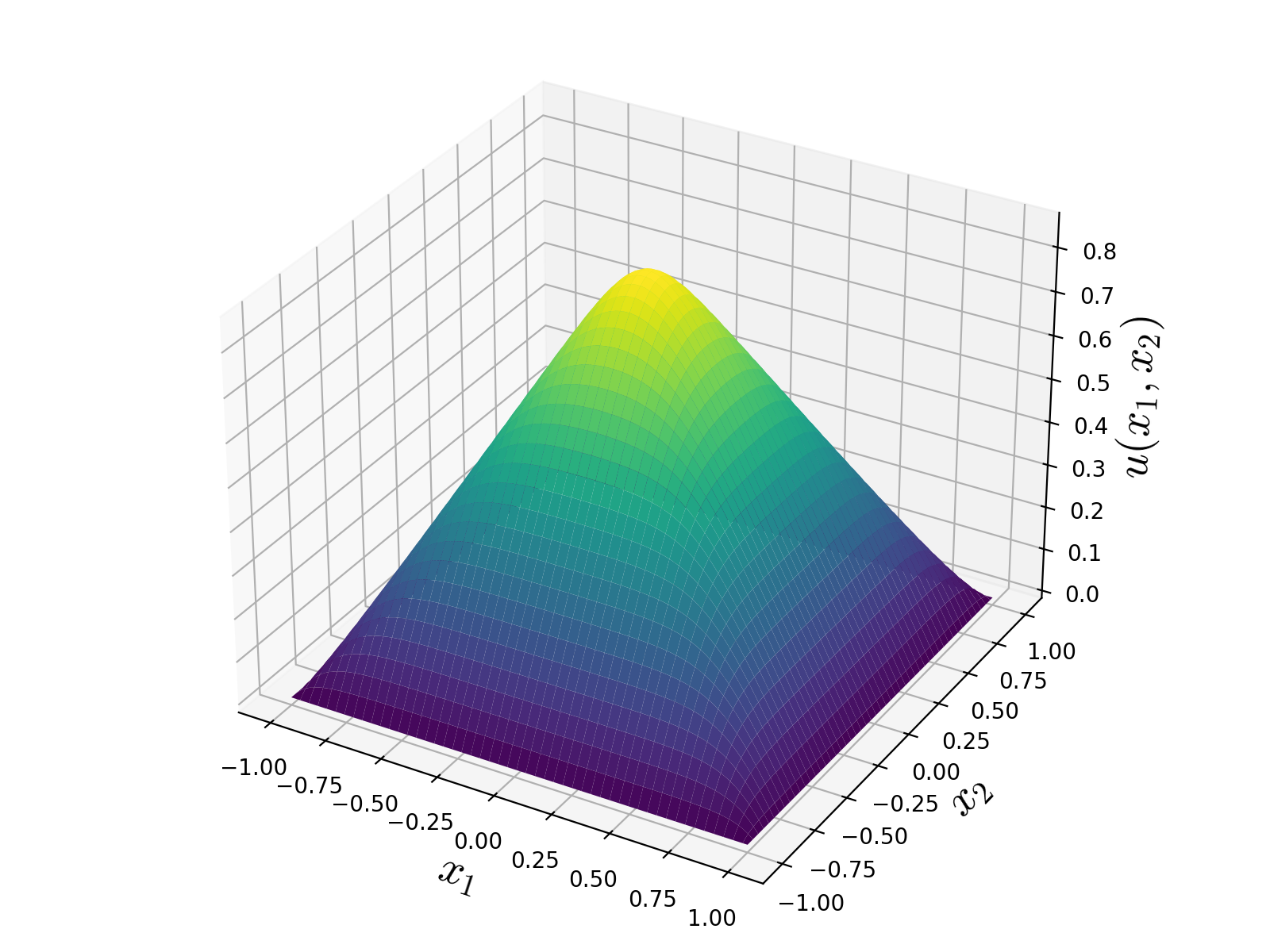}
        \caption{}
        % \caption{Exact solution $u(x_1, x_2)$, $\epsilon = 0.10$}
        \label{fig:eikonal_exact}
    \end{subfigure}
    \hspace{0.01\textwidth}
    % Second subfigure: 1/4 width
    % \begin{subfigure}[t]{0.36\textwidth}
    %     \centering
    %     \includegraphics[width=\textwidth]{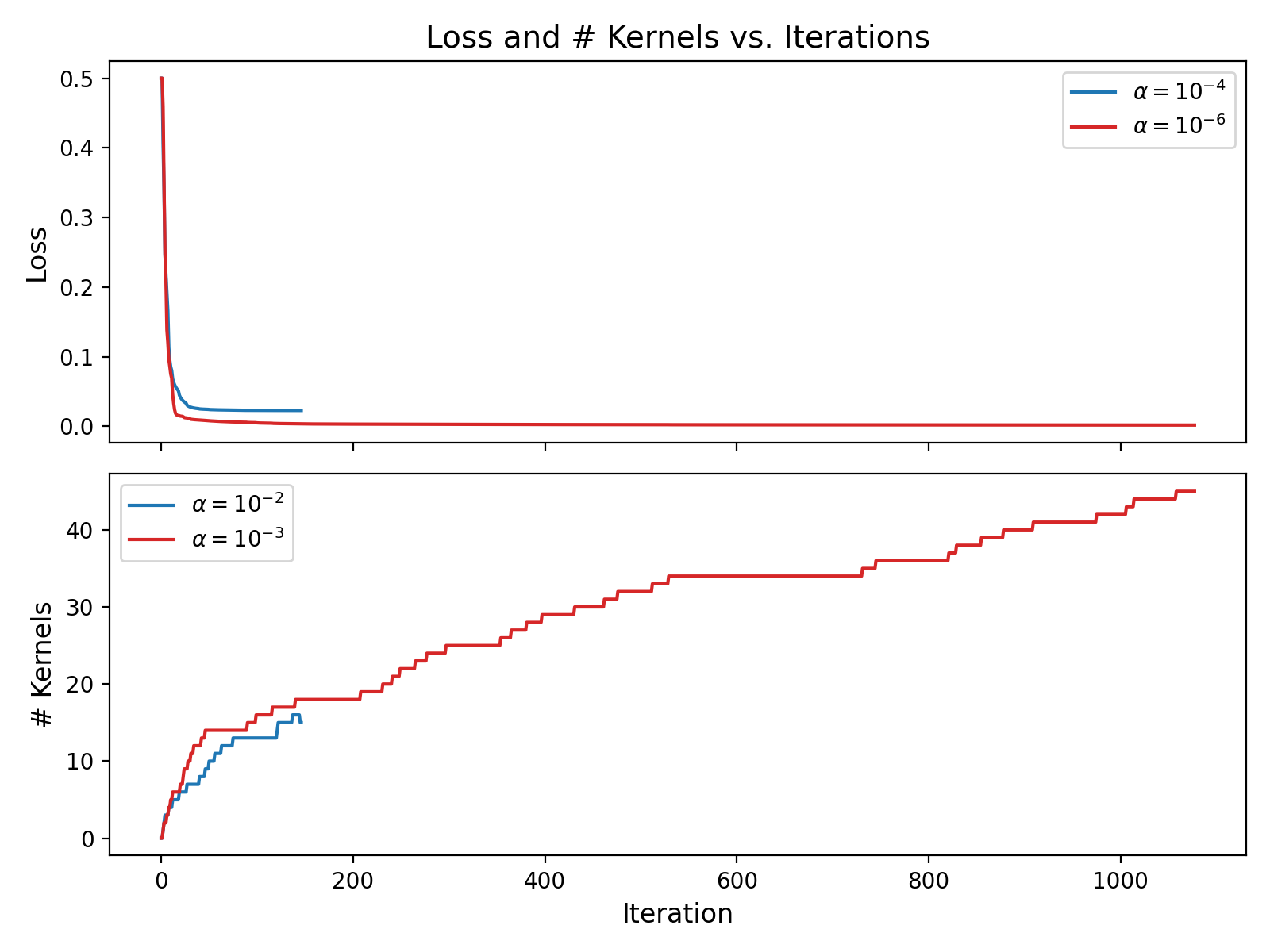}
    %     \caption{Convergence of loss (up) and growth of number of kernels (bottom) with $\alpha = 10^{-4}, 10^{-6}$.}
    %     \label{fig:eikonal_convergence}
    % \end{subfigure}
    % Third subfigure: 1/2 width
    % \vskip 1em
    \begin{subfigure}[t]{0.58\textwidth}
        \centering
        \includegraphics[width=1.0\textwidth]{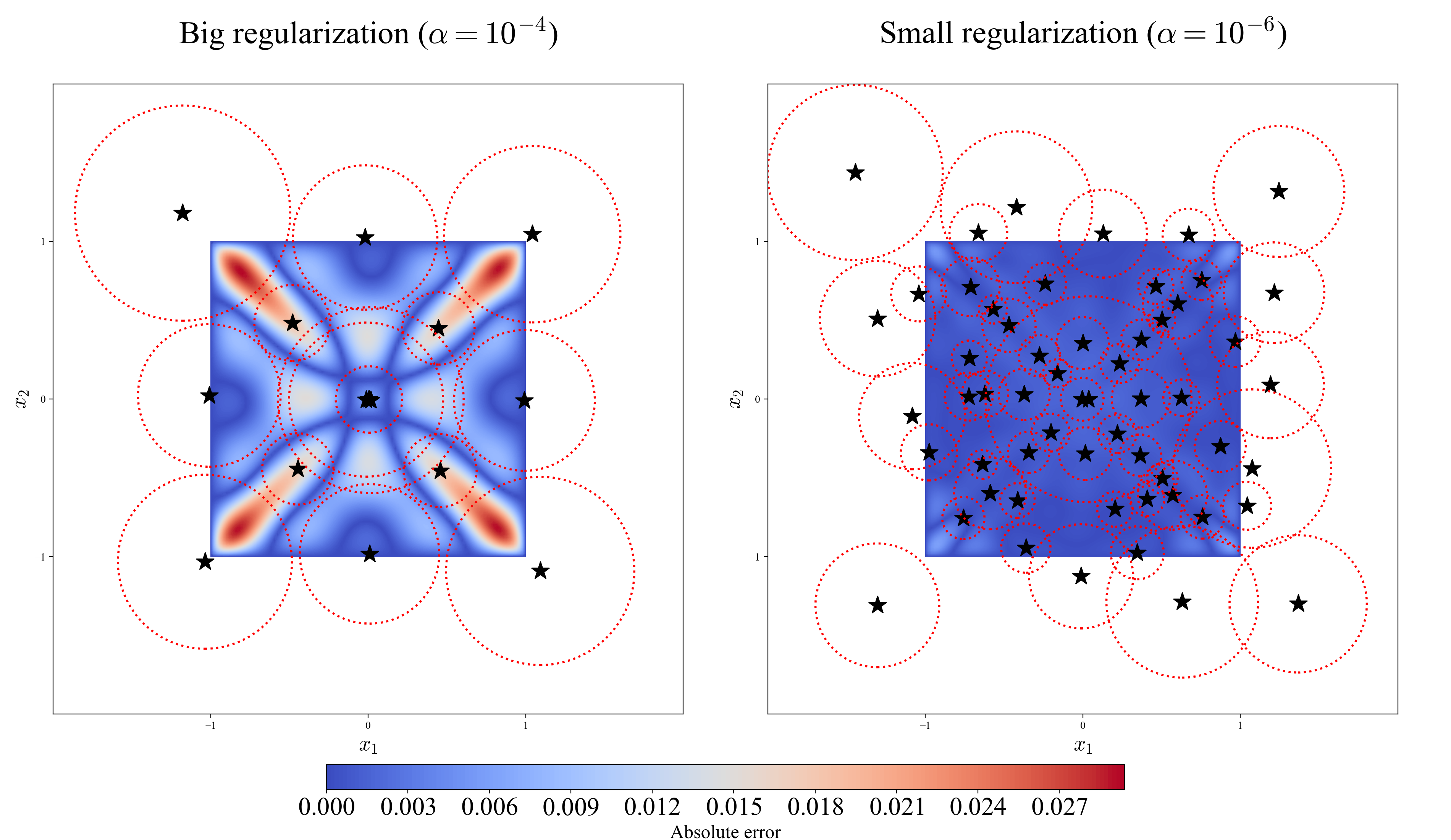}
        \caption{}
        % \caption{Error contour of numerical solutions with regularization parameter $\alpha = 10^{-4}, 10^{-6}$.}
        \label{fig:error_contour_eikonal}
    \end{subfigure}
    \caption{Numerical results of regularized Eikonal equation ($\epsilon = 0.10$). (a) Exact solution $u(x_1, x_2)$, $\epsilon = 0.10$; (b) Error contour of numerical solutions with regularization parameter $\alpha = 10^{-4}, 10^{-6}$. Each $\star$ represents the location parameters $y_n$ of learned kernel nodes, and the dotted circle around are of radius $\sigma_n$. }
    \label{fig:eikonal_regularized}
\end{figure}

\begin{figure}[t]
    \centering
    \begin{subfigure}[t]{1.0\textwidth}
        \includegraphics[width=\textwidth]{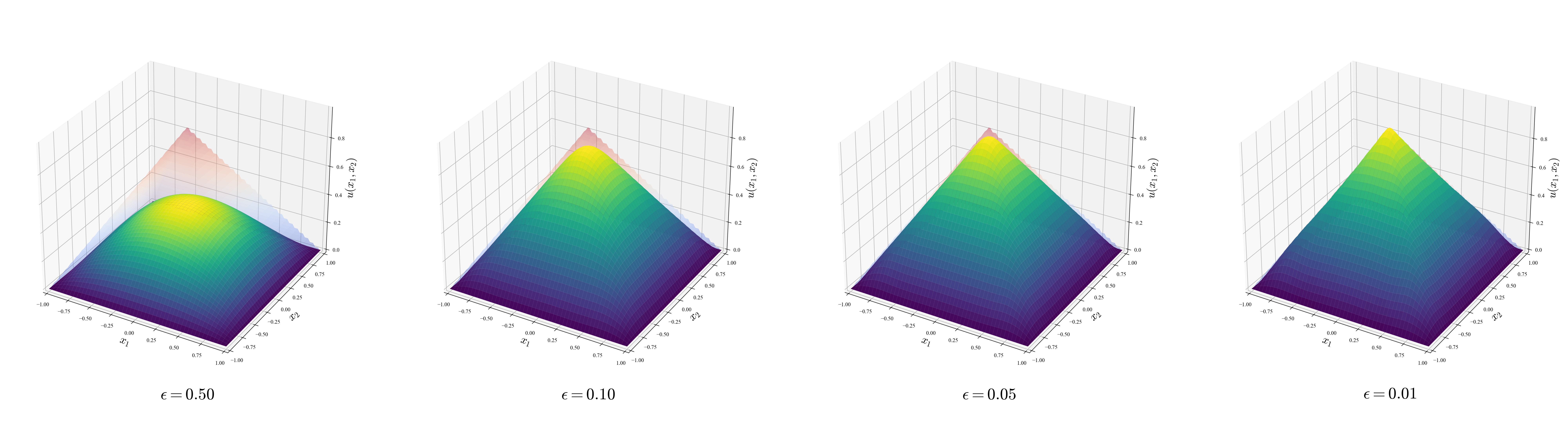}
        % \caption{Solutiuon surfaces (shadow in the background represents viscosity solution).}
        \caption{}
    \end{subfigure}
    % \vskip 1em
    \begin{subfigure}[t]{1.0\textwidth}
        \includegraphics[width=\textwidth]{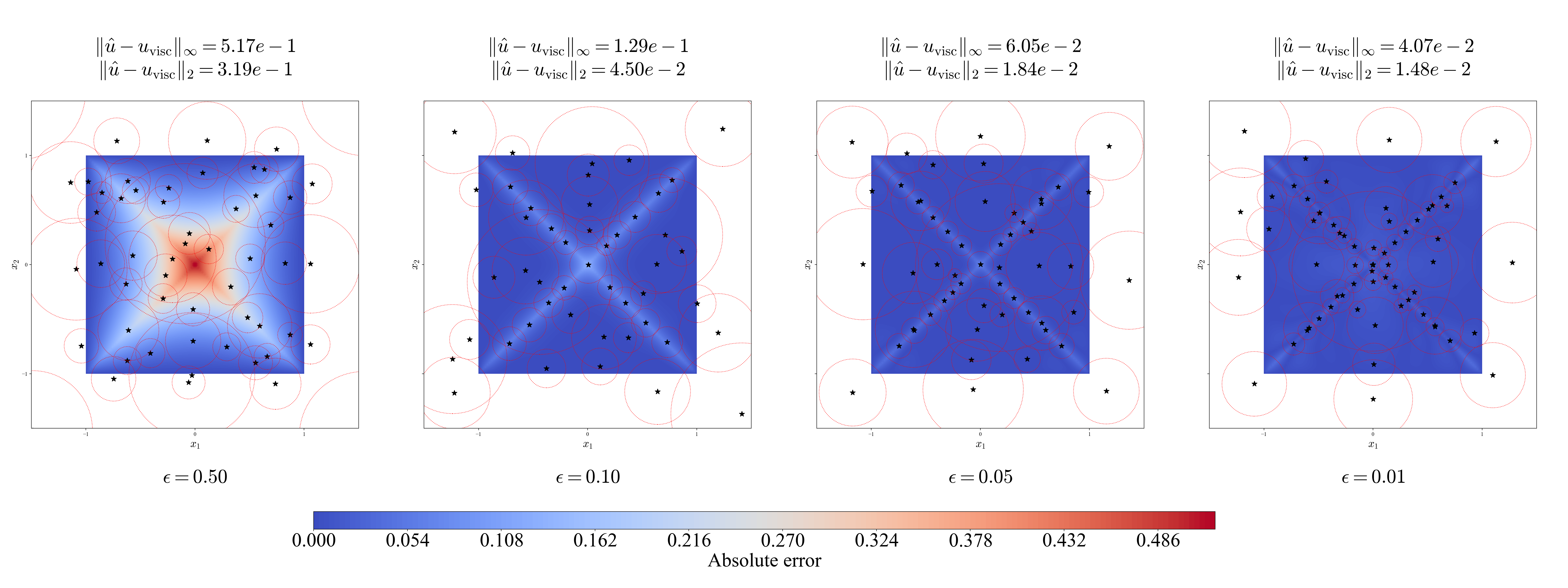}
        % \caption{Error contour with shrinking $\epsilon$. Each $\star$ represents the location parameters $y_n$ of learned kernel nodes, and the dotted circle around are of radius $\sigma_n$.  $L^{2}$ error and $L^{\infty}$ error listed are averaged from $10$ runs. \zs{Single realizations}}
        \caption{}
    \end{subfigure}
    \caption{Convergence of numerical solution to viscosity solution $u_{\text{visc}}(x) = \dist(x, \partial D)= 1 - \max(|x_{1}|, |x_{2}|)$ as $\epsilon \rightarrow 0$. (a) Solution surfaces (shadow in the background represents viscosity solution); (b) Error contours with shrinking $\epsilon$. Each $\star$ represents the location parameters $y_n$ of learned kernel nodes, and the dotted circle around are of radius $\sigma_n$. The contour plots are taken from a single run, while the reported $L^2$ and $L^{\infty}$ errors are averaged over 10 runs.}
    \label{fig:eikonal_viscos}
\end{figure}

\subsection{Viscous Burgers' equation}
\label{subsec:burgers}
We now consider the viscous Burgers' equation 
 \begin{align*}
        \partial_{t}u + u \partial_x u - 0.02 \partial_{x}^{2} u &= 0, \ \forall (t, x) \in (0, 1]\times(-1, 1)\\
        u(0, x) &= -\sin(\pi x)\\
        u(t, -1) &= u(t, 1) = 0
    \end{align*}

% In this method, we choose problems yet the loss function \eqref{eq:loss} treats them collectively through a single penalty parameter $\lambda$. As a result, our method faces challenges in selecting an appropriate value of $\lambda$ that simultaneously balances the contributions of both the initial and boundary conditions. In general, it is observed that methods where boundary/initial condition is forced as a penalty to the loss (e.g. PINN) struggles to solve dissipative equations and some remedies have been made 
%  ~\cite{biesek2023burgers,wight2020solving, mattey2022novel}. 

% To address this problem
 
Instead of using a spatial-temporal formulation as in~\cite{chen2021solving, raissi2019physics}, we employ a simple (fully) implicit backward Euler method for time discretization, which not only reduces the dimension of the problem but also alleviates the difficulty of selecting appropriate penalty parameter $\lambda$ that simultaneously balances initial and boundary conditions. The resulting algorithm solves an array of one-dimensional equations for $\{u^{n}\}_{n=1}^{N}$ ($N= \lceil\frac{1}{\Delta t} \rceil$) sequentially starting from $u^{0} = u(0, \cdot)$. To specify, we use Algorithm~\ref{alg:main} at each time step $n$ with the residuals
\begin{align*} 
    \cE^{n}[u^{n}]&:= \frac{u^{n} - u^{n-1}}{\Delta t} + u^{n}\partial_x u^{n} - \nu \partial_{x}^{2} u^n  \\
    \cB^{n}[u^{n}] &:= u^{n} .
\end{align*}

% (see Algorithm \ref{alg:burgers}).

% \begin{algorithm}[H]
%     \SetAlgoLined
%     \KwIn{Initial condition $u^{0} = u(0, \cdot)$, step size $\Delta t$, regulairzation parameter $\alpha$}
%     \KwOut{Solution $\{u^n\}_{n=1}^{N}$ approximating $u(t,x)$ on $(0, T]$}
%     Set $n=0$;\\
%     \While{$ n <  \frac{T}{\Delta t} $ }{
%         Initialize $u^n$ as $u^{n-1}$ if $n > 1$ otherwise an empty network.
        
%        Use Algorithm \ref{alg:main} to solve for $u[n]$ such that 
%        \begin{align*}
%            \frac{u^{n} - u^{n-1}}{\Delta t} + u^{n}\partial_x u^{n} - \nu \partial_{x}^{2} u^n = 0\\
%            u^n(1) = u^n(-1) = 0
%        \end{align*}
        
%         $n = n + 1$
%     }
%     \caption{Solving Burger's equation with time discretization}
%     \label{alg:burgers}
% \end{algorithm}

We set $\Delta t = 0.01$, $\alpha = 10^{-4}$ and fix $K = 40$ grid collocation points in space domain $D_x = [-1, 1] \subseteq[-2, 2] = \Omega_{x}$ for each solve of $u^{n}$. The mask function technique introduced in \Cref{subsec:bnd_treat} is used to enforce $u^{n}(\pm 1) = 0$ (with $\gamma(x) = (x+1)(x-1)$). $\{u^n\}$ is then solved sequentially, with $u^{n}$ initialized as $u^{n-1}$ for $n \geq 2$ \footnote{In practice, we solve scaled equation i.e.\ $u^{n} - u^{n-1}  + \Delta t ( u^{n}\partial_x u^{n} - \nu \partial_{x}^{2} u^{n}) = 0$ for some implementation convenience, in which $\alpha$ needs to be scaled as $\Delta t^{2}\alpha$.}. Convergence plots of $u^{n}$ for $n = 1, 20, 50, 80$ are shown in Figure \ref{fig:burgers_loss_convergence}. We remark that our sequential solving of $u^{n}$ benefits tremendously from transfer learning: since the solution is continuous in time, it is natural to initialize $u^{n}$ as $u^{n-1}$ trained in the last iteration, which effectively reduces number of iterations required for convergence. After obtaining $\{u_n\}$, the numerical solution is extended to the entire domain $D$ via linear interpolation. The error contour, together with three slices of exact and numerical solution, is shown in Figure \ref{fig:burgers_error_contour} and Figure \ref{fig:burgers_slices}. We also observe a rapid increase in the number of kernel nodes between $t = 0.2$ and $t = 0.5$, corresponding to the formation of a shock wave. During this period, the network faces increased difficulty in approximating the steepening solution, prompting the insertion of more kernel nodes. This behavior further highlights the adaptivity of our method. 

\begin{figure}[t]
\centering
\begin{subfigure}[t]{0.325\textwidth}
    \centering
    \includegraphics[width=\linewidth]{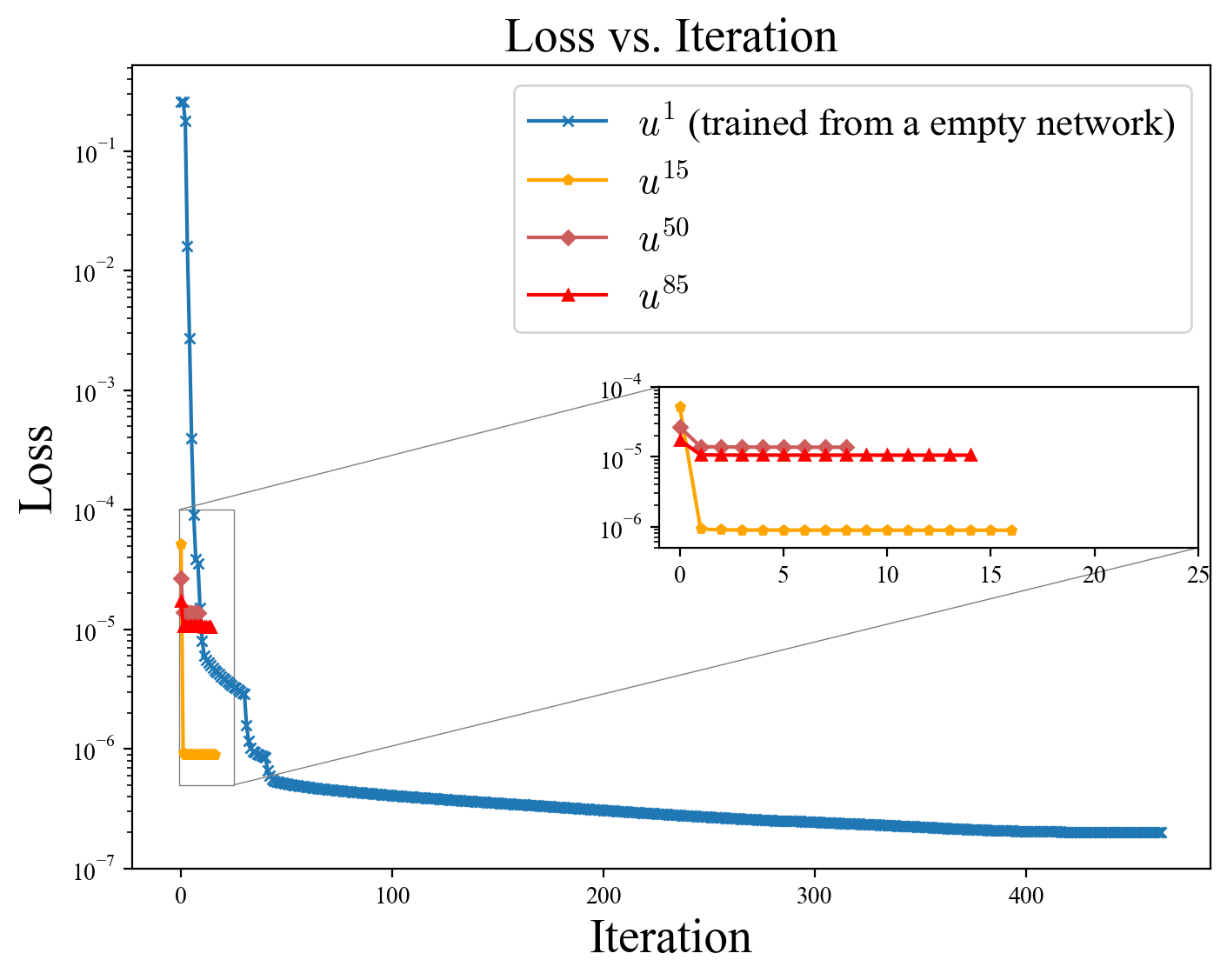}
    % \caption{Convergence of loss for selected $u^{n}$ ($\alpha = 10^{-4}$).}
    \caption{}
    \label{fig:burgers_loss_convergence}
\end{subfigure}
\begin{subfigure}[t]{0.32\textwidth}
    \centering
    \includegraphics[width=\linewidth]{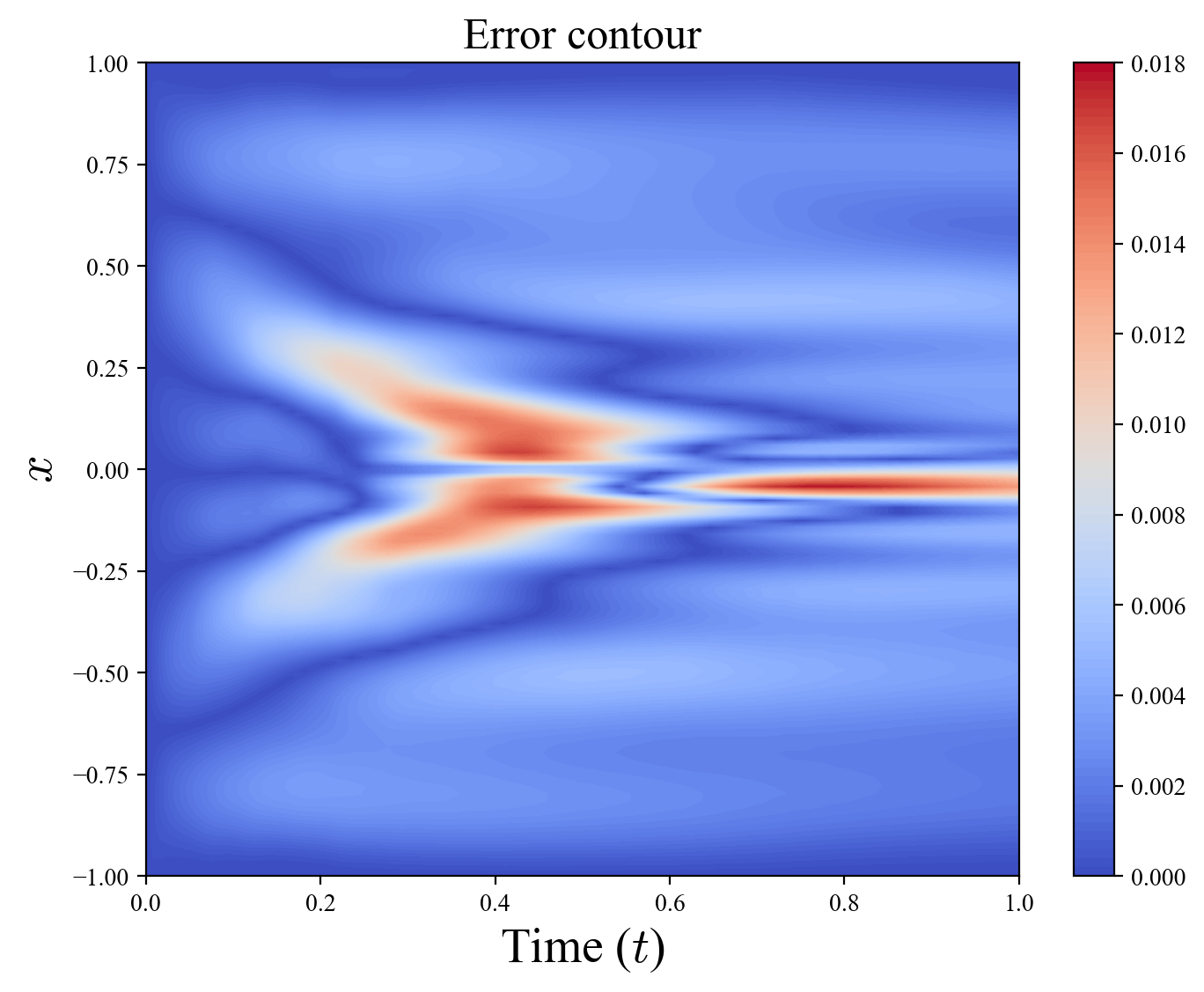}
    % \caption{Error contour ($\alpha = 10^{-4}$)}
    \caption{}
    \label{fig:burgers_error_contour}
\end{subfigure}
% \hfill[0.05\textwidth]
\begin{subfigure}[t]{0.32\textwidth}
    \centering
    \includegraphics[width=\linewidth]{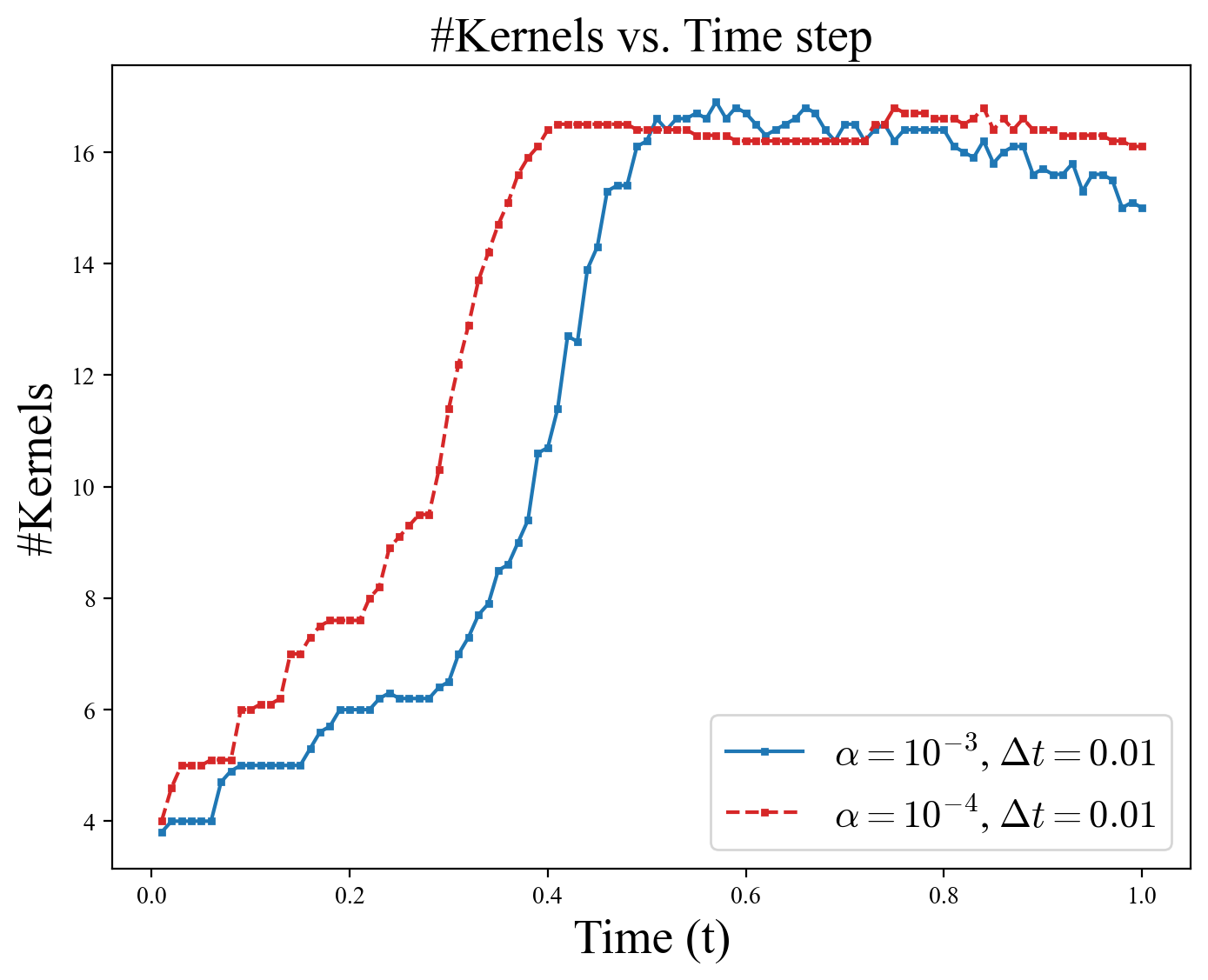}
    \caption{}
    % \caption{Growth of number of kernels in the network during time steps ($\Delta t = 0.01$) for $\alpha = 10^{-3}, 10^{-4}$ (results averaged from 10 runs). }
    % \caption{Increase of number of kernels in the network during time steps $\Delta t = 0.01$. Network with $\alpha = 10^{-3}$, though being more sparse at early time steps, inserted more kernels when error was accumulated in later. \kp{not sure I understand. I would say the main takeaway is that the method increases the number of kernels as the profile steepens (which is good). Why the difference between the two \(\alpha\) would need a better explanation, maybe a picture...}\zs{This picture is to give an explanation for similar \# kernels for big and smaller $\alpha$ in the result table. Though a big alpha results in a more compact network structure in early steps, this leads to inaccurate approximations. The error finally accumulated in $t=0.5$, where the transition started to become sharp, resulting in more kernel insertions to mitigate the error. }}
    % \label{fig:burgers_num_nodes}
\end{subfigure}

\vskip 1em
\begin{subfigure}[t]{1.0\textwidth}
    \centering
    \includegraphics[width=\linewidth]{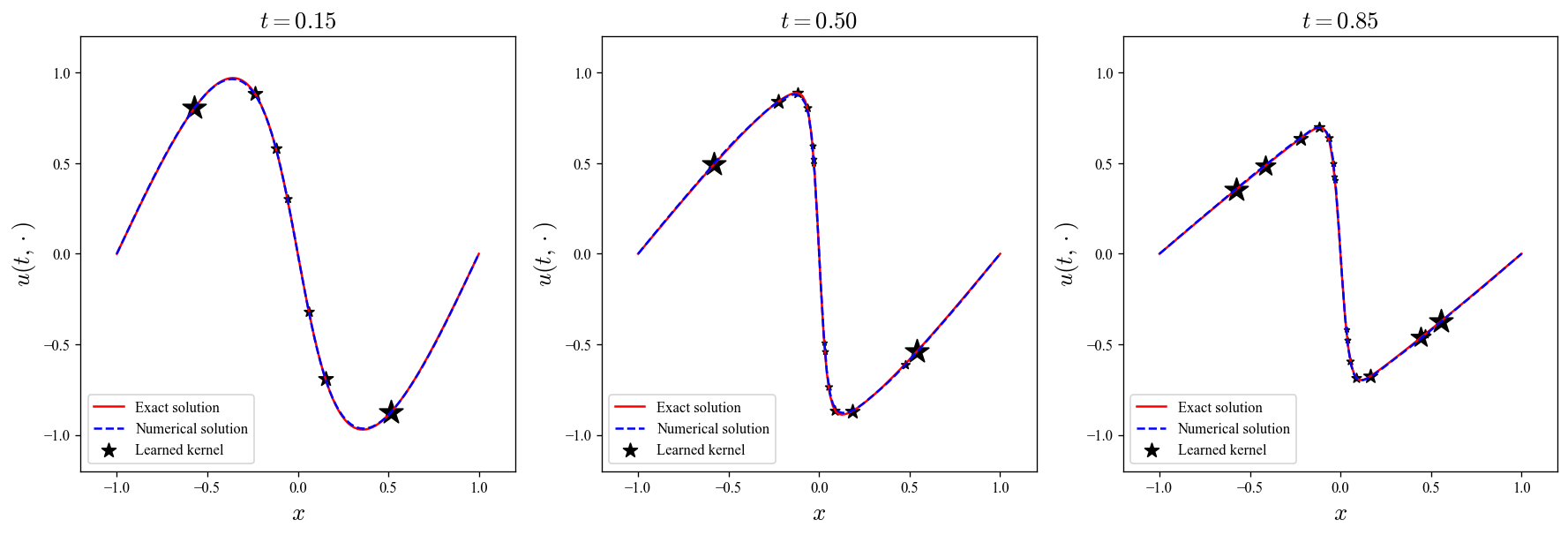}
    \caption{}
    % \caption{Time slices of exact and true solution at time $t= 0.15, 0.50, 0.85$ ($\alpha = 10^{-4}$). Size of $\star$ is proportional to the bandwidth of individual kernel. }
    \label{fig:burgers_slices}
\end{subfigure} 

\caption{Viscous Burgers equation solved by Sparse RBFNet coupled with fully implicit time discretization. (a) Convergence of loss for selected $u^{n}$ (\revision{regularization parameter}  $\alpha = 10^{-4}$); (b) Error contour (\revision{regularization parameter}  $\alpha = 10^{-4}$); (c) Growth of number of kernels in the network during time steps ($\Delta t = 0.01$) for \revision{regularization parameter} $\alpha = 10^{-3}, 10^{-4}$ (results averaged from 10 runs); (d) Time slices of exact and true solution at time $t= 0.15, 0.50, 0.85$ (\revision{regularization parameter}   $\alpha = 10^{-4}$). The size of $\star$ is proportional to the bandwidth of an individual kernel.}
\label{fig:burgers_one_run}
\end{figure}

We then test with regularization coefficient $\alpha =  10^{-3}, 10^{-4}$ and time stepsize $\Delta t= 0.1, 0.01$ (see Table \ref{tab:burgers}).
% \kp{To make sure that the regularization error is proportional to \(\Delta t\), it seems plausible to select \(\alpha = \alpha_0 \Delta t\), except for at \(t=0\).}\zs{I explained this in the footnote. In my implementation, I did use this scaling.}
% \kp{OK, good. I would suggest to move the footnote to the text, and clearly state how \(\alpha\) is chosen, to avoid confusion.}\zs{With the notation, there's no need to scale $\alpha$ accordingly. In implementation,  I would like it to keep this in the foonote.}
As is typical in backward Euler type methods, a smaller regularization parameter $\alpha$ is preferred to obtain a more accurate solve of each $u^{n}$, preventing significant error accumulation between time steps.

\begin{table}[t]
\centering
\caption{Errors and number of kernels for the viscous Burgers' equation with different combinations of time step $\Delta t$ and regularization parameter $\alpha$. Numerical solution $\{u^{n}\}$ is extended to the entire $D$ by linear interpolation. 
% \xt{Can you clarify this sentence?}. \zs{Since $\{u^{n}\}$ is obtained through time discretizations, extended to the entire domain $D$ via linear interpolation. } \xt{I didn't understand why $\hat{L}$ is relate to derivative information.}\zs{Here $\hat{L}$ refers to the one obtained by spatial-temporal formulation of PDE as we need to compute on the test grid. That one needs $\partial_x$, which is not accessible. I use $120\times 120$ test points, so that some of these do not lie on the discrete time slices.} 
% \xt{I see. Do you think it is worth including $\hat{L}$ for a final time step? I am fine either way.} \zs{I don't think we need it. I guess that will a bit misleading.}
Test results are evaluated on a grid of $120\times 120$ in $D$. The empirical residual loss $\hat{L}$ is not accessible because some test points do not lie on the discretized time steps. The number of kernels shown in the table is first averaged between time steps. All results are averaged over 10 runs.} 
\begin{tabular}{ccccc}
\toprule
$\Delta t$ & \multicolumn{2}{c}{$0.10$} & \multicolumn{2}{c}{$0.01$} \\
\cmidrule(lr){2-3} \cmidrule(lr){4-5}
$\alpha$ & $10^{-3}$ & $10^{-4}$ & $10^{-3}$ & $10^{-4}$ \\
\midrule
$L^2$ error          & $9.79\text{e}{-2}$ & $5.42\text{e}{-2}$ & $1.16\text{e}{-2}$ & $6.03\text{e}{-3}$ \\
$L^\infty$ error     & $3.39\text{e}{-1}$ & $1.73\text{e}{-1}$ & $5.55\text{e}{-2}$ & $1.93\text{e}{-2}$ \\
\#Kernels            & 9 & 17 & 15 & 16 \\
\bottomrule
\label{tab:burgers}
\end{tabular}
\end{table}
\revision{We briefly discuss the expected behavior in the small-viscosity regime. As the viscosity (set to $0.02$ in the above experiment) decreases, the solution develops a sharper shock wave, which requires more kernels to resolve the steep transition. The increased number of kernels typically leads to a more ill-conditioned Gauss–Newton system and a more challenging optimization, resulting in degraded performance. Moreover, since we employ a classical time discretization, smaller viscosity generally necessitates a finer time step, as is common in convection-dominated problems. In this regime, refined time discretization schemes (e.g., higher-order backward differentiation or adaptive stepping) could also be incorporated to further improve accuracy \citep{chen2025sparse}. Moreover, we observe that the small-viscosity regime requires a substantially larger number of collocation points ($K$) to resolve the sharper shock, analogous to the reduced mesh size requirement in classical numerical methods (see Appendix \ref{app:burgers_small_viscosity}). These observations suggest that, despite being mesh-free, the method still exhibits a resolution constraint when approximating highly singular solutions.} While valid, these points are beyond the scope of the present work and are left for future investigation.

% \begin{table}[h]
% \centering
% \caption{Error of solving $4$-dimensional Poisson equation under different numbers of collocation points $K$ and regularization parameters $\alpha$. Results are averaged from 10 runs.}
% \begin{tabular}{ccccc}
% \toprule
% $K$ $(K_D, K_{\partial D})$ & \multicolumn{2}{c}{$1296$ $(256, 1040)$} & \multicolumn{2}{c}{$4096$ $(1296, 2800)$} \\
% \cmidrule(lr){2-3} \cmidrule(lr){4-5}
% $\alpha$ & $10^{-3}$ & $10^{-4}$ & $10^{-3}$ & $10^{-4}$ \\
% \midrule
% \#Kernels          & $193$ & $287$ & $220$ & $370$ \\
% $L^2$ error      & $1.74\text{e}{-1}$ & $2.19\text{e}{-1}$ & $1.46\text{e}{-1}$ & $5.97\text{e}{-2}$ \\
% $L^\infty$ error & $2.24\text{e}{-1}$ & $4.67\text{e}{-1}$ & $1.88\text{e}{-1}$ & $8.19\text{e}{-2}$ \\
% \bottomrule
% \label{tab:highdim}
% \end{tabular}
% \end{table}

% \begin{figure}[h]
%     \centering
%     \includegraphics[width=0.5\linewidth]{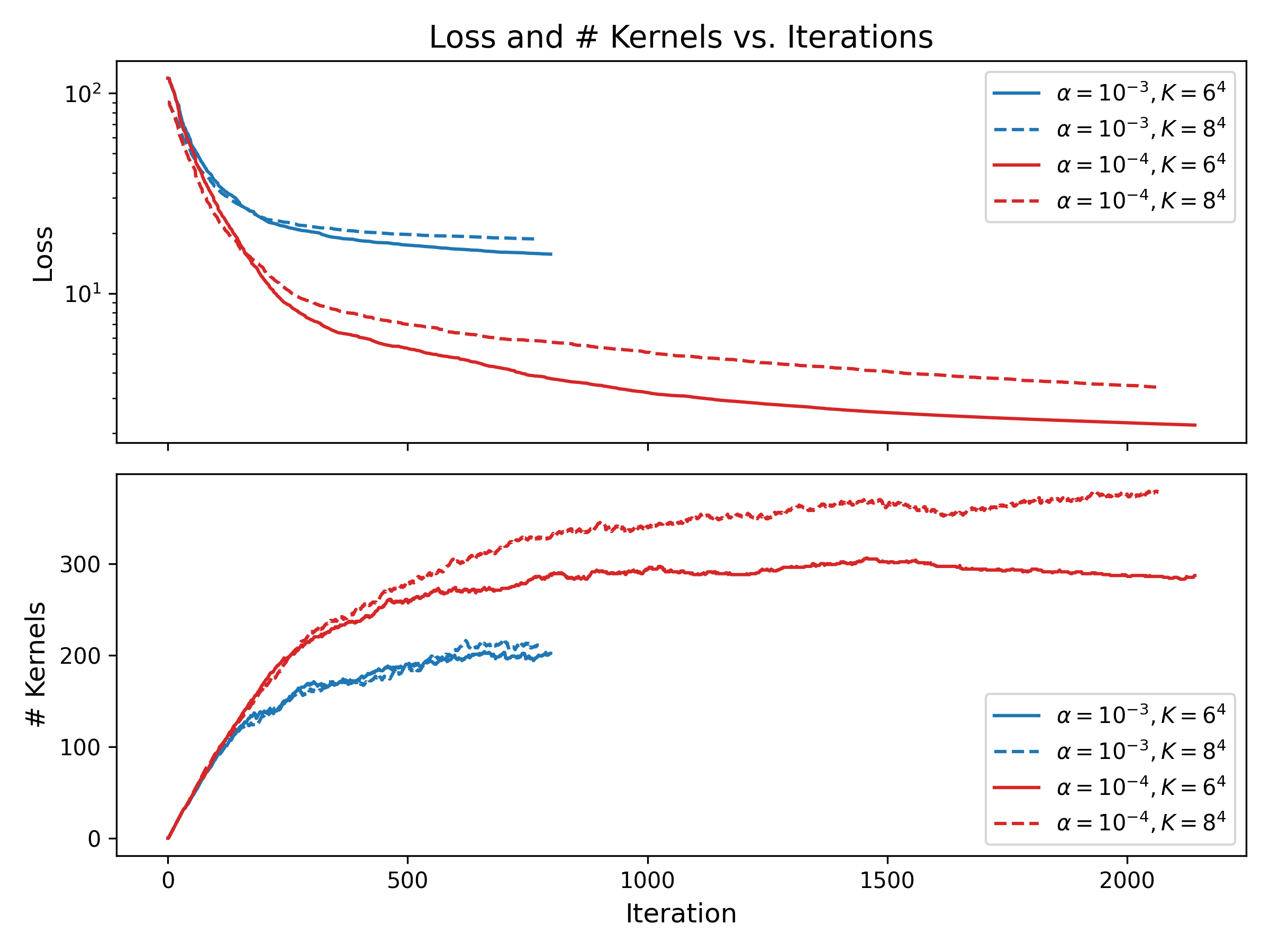}
%     \caption{4d Poisson equation: Convergence of loss (up) and growth of number of kernels (bottom) with $\alpha = 10^{-3}, 10^{-4}$ and $K = 6^4, 8^4$.}
%     \label{fig:high_dim_convergence}
% \end{figure}

\section{Conclusion and discussion}
\label{sec:conclusion}

We have developed a sparse kernel/RBF network method for solving PDEs, in which the number of kernels, centers, and the kernel shape parameter are not prescribed but are instead treated as part of the optimization. Sparsity is promoted via a regularization term, added to the customary weighted quadratic $L^{2}$ loss on both interior and boundary. Theoretically, we extend the discrete network structure to a continuous integral neural network formulation, which constitutes a Reproducing Kernel Banach Space (RKBS). We establish the existence of a minimizer in the RKBS for the regularized optimization problem. Under additional assumptions, the minimizer has proven convergence properties towards the true PDE solution when the number of collocation points $K\rightarrow \infty$ and the regularization parameter $\alpha \rightarrow 0$. Notably, we prove a representer theorem that guarantees the existence of a minimizer expressible as a finite combination of feature functions (i.e., Gaussian RBFs) with a bounded number of terms. Computationally, we develop Algorithm \ref{alg:main} which effectively integrates the network width $N$ into the optimization process through an iterative three-phase framework that alternates between optimization $\omega = (y, \sigma)$ via a Gauss-Newton method and nodes insertion/deletion. Here, the insertion is guided by the dual variable derived from the first-order optimality condition. This dual variable, defined for each candidate kernel node $\omega$,  serves as an effective estimate of potential reduction in the loss function upon insertion into the network. We validate the proposed algorithm on a semilinear equation, an Eikonal equation, and a viscous Burgers’ equation, demonstrating its ability to produce sparse yet accurate solutions across a range of problem settings. Furthermore, the regularization term not only promotes sparsity in the learned solution but also plays a crucial role in preventing overfitting. Overall, our method lays a promising foundation for the development of flexible, scalable, efficient, and theoretically grounded neural network solvers capable of handling a wide range of PDEs with complex solution behaviors. 

We note that the analysis of existence and the representer theorem for minimizers is conducted within the RKBS $\cV$ associated to the kernel/feature function, and therefore depends critically on its characterization. Future research should revisit the analysis with refined understanding and characterization of the RKBS $\cV$. 
% \zs{Not sure if this true; more collocation points should result in refined solutions, and number of linear operators indicates complexity of PDE as well as the potential solution. We should only expect improvement of current linear relation. } 
Since each $\cV$ is completely defined by the associated kernel/feature function, it would also be interesting to explore alternative kernel families beyond Gaussian RBFs (e.g., Mat\'ern kernels). In particular, anisotropic kernels, where the shape parameter is given by a matrix rather than a scalar, offer more degrees of freedom to capture directional features in PDE solutions, making them a promising choice for achieving sparser representations. Moreover, current convergence analysis is based on the interplay between solution space $\cU$, source function space $\cF$, and $L^2$ space induced from the loss/objective function. To better reflect the underlying problem structure and thereby improve error analysis, it would be beneficial to develop PDE-specific loss functions that align more closely with the regularity or other properties of $\cU$ and $\cF$. Additionally, our theoretical estimate on the number of terms in the representer theorem is likely pessimistic; in practice, the sparse minimization yields significantly fewer active feature functions. Understanding this gap theoretically is an interesting direction. 
%Finally, exploring non-convex regularizers may further enhance sparsity and facilitate the characterization of local solutions to the sparse minimization problem \cite{pieper2022nonconvex}. 
% Beyond that, we may also considering using a non-convex regularization term for further sparser representation~\cite{pieper2022nonconvex}.  

% Future work on theoretical foundation include refined characterization of RKBS $\cV$. 

% \[
%      \varphi(x;\omega) = 
%     \frac{\det(\Sigma)^{s/2}}{\sqrt{2\pi}^d \det(\Sigma)} 
%     \exp\left(- \frac{(x-y)^{T}{\Sigma^{-1}}(x-y) }{2}\right)
%     \quad\text{where } \omega = (y,\Sigma) \in \R^{d+d^2}, \Sigma \geq 0
% \]

Further improvement of the computational algorithm can be achieved as well through the usage of kernels and loss functions that better capture features and regularity of both the solution and source function data. For instance, the effectiveness of a well-chosen loss function is exemplified in \Cref{subsec:num_bnd_treat} in which $H^{1}(\partial D)$ is used in place of the usual $L^{2}(\partial D)$ for boundary constraints. Additionally, as the computational cost of our method scales with the number of collocation points $K$, number of kernel functions $N$, and problem dimension $d$, it is natural to consider a doubly adaptive sub-sampling framework, where only a subset of the kernel parameters is updated with only a mini-batch of the collocation points at each iteration. While the use of mini-batches of training data is standard in the machine learning literature, incorporating it with partial kernel weights update (usually known as block coordinate descent method~\citep{wright2015coordinate, xu2013block}) requires further theoretical justification and algorithmic design. Nevertheless, this strategy can significantly improve the computational efficiency of the algorithm, particularly in cases where large $K$ and $N$ are required to capture complex solution geometries, which is common in high-dimensional PDE problems.
Finally, it is also of great interest to explore extensions of this approach to inverse problems and nonlocal or fractional equations \citep{burkardt2021unified,DDGG20,guo2022monte}.

% \xt{TODO: summarize the results and discuss future directions. 
% \begin{itemize}
%     \item Theoretical developments
%     \begin{itemize}
%         \item Refined characterization of $\cV$
%         \item A better estimate of atoms with not too small $\alpha$; appropriate function space for sparse approximation to work
%         \item Non-convex regularization and the associated theory
%         \item Suitable objective/loss functions for specific types of PDEs
%     \end{itemize}
%     \item Algorithmic developments 
%     \begin{itemize}
%         \item Anisotropic kernels
%         \item Adaptive sampling of collocation points 
%         \item High dimensional cases 
%         \item Multi-stage training? 
%     \end{itemize}
% \end{itemize}
% }

% Acknowledgements and Disclosure of Funding should go at the end, before appendices and references

\acks{Z.~Shao and X.~Tian were supported in part by NSF DMS-2240180 and the Alfred P. Sloan Fellowship.
 This manuscript has been co-authored by UT-Battelle, LLC, under contract DE-AC05-00OR22725 with the US Department of Energy (DOE). The US government retains and the publisher, by accepting the article for publication, acknowledges that the US government retains a nonexclusive, paid-up, irrevocable, worldwide license to publish or reproduce the published form of this manuscript, or allow others to do so, for US government purposes. DOE will provide public access to these results of federally sponsored research in accordance with the DOE Public Access Plan.
 
 The authors thank the anonymous referees for their helpful suggestions.}
% Manual newpage inserted to improve layout of sample file - not
% needed in general before appendices/bibliography.
\newpage

\appendix
\section{Optimization Algorithm}
\label{app:algo}
We provide additional implementation details for Algorithm \ref{alg:main}. 

\subsection{Phase I}
\label{app:algo_phase_1}
As mentioned earlier, we introduce several optional heuristic modifications to the insertion step.

\subsubsection{Insertion threshold} While the threshold for insertion, which is the dual variable, only characterizes the descent magnitude by optimizing $c$, we couple it with $\nabla_{y_{n}} L $. The threshold of insertion (right-hand side of \eqref{eq:insertion_threshold}) is then
\[
    \max_{1\leq n \leq N} \{ |p[u](\omega_{n})| + \eta \|\nabla_{y_{n}} L(u) \|_{2} \} 
\]
The additional term is based on the fact that we also gain descent by updating $y_n$ of preexisting nodes. In practice, this largely alleviates the possibility of having a cluster of nodes at a certain location (since the candidate nodes neighboring to preexisting ones are very likely to be inserted since their dual variables are high due to the algorithm).
The choice of $\eta$ is not critical and is typically fixed to be $0.01$. For the Burgers’ equation experiment in Section , however, we set $\eta = 0.1$ as we observed clustering of nodes otherwise.

\subsubsection{Loosening insertion criterion}
As mentioned earlier, the scheme of inserting new feature functions is essentially greedy. To accelerate convergence, we may need to relax the criteria for nodes insertion, especially during early iterations in which the learned solution is not close to the true solution. 
One option is to add a shrinking coefficient to the threshold, but tuning this coefficient is often nontrivial. We adopt a Metropolis-Hasting style criterion, in which a candidate node $\omega_{N+1}$ is accepted with probability
\begin{equation}
    \text{Pr}(\omega_{N+1}) = \min\{1, \; \exp( - \frac{1}{T\cdot \hat{R}[u]} (\max_{1\leq n \leq N}  |p[u](\omega_{n})| - |p[u](\omega_{N+1})|))\}
\end{equation}
where $T$ is a positive constant and $\hat{R}[u]$ is the relative error serving as an annealing term. This scheme admits more plausible points when the current approximation is far from the solution. In practice, we choose $T \propto -\log \alpha$ as a smaller $\alpha$ usually leads to more nodes in the end.

\subsection{Phase II}
\label{app:algo_phase_2}

\subsubsection{Levenberg–Marquardt type correction term for invertability}

To ensure invertability $DG$, we add the following correction term to approximated Hessian $\nabla^{2}\hat{\ell}$.
\[\operatorname{Cor} = 0.1 \times \|WR\|_{1} \operatorname{diag}\{\underbrace{\epsilon, \dots, \epsilon }_{N},\underbrace{|c_1|, \dots, |c_1|}_{d+1}, \underbrace{|c_2|, \dots, |c_2|}_{d+1}, \dots, \underbrace{|c_N|, \dots, |c_N|}_{d+1}\} \in \R^{N(d+2) \times N(d+2)}\]
and 
\[
    \nabla^{2}\hat{\ell}  \approx J_{R} W J_{R} + \operatorname{Cor} 
\]

\subsubsection{Line search}
As is customary in Gauss-Newton algorithm, we do a line search of optimal step size. We shrink step size $\theta$, by ${2}/{3}$ each time, starting from $\theta = 1$, until the actual descent is within trust-region, which is set to be
\[
  \text{Actual Descent} \leq h \times \text{Estimated Descent} =   h \theta G^{T}(\operatorname{DP} z).
\]
where $h\in[{1}/{5}, {1}/{3}]$. In all numerical experiments in \Cref{sec:experiments}, we set $h = {1}/{5}$.

% \subsubsection{Active points}
% To minimize the computational cost needed for assembling $J_R$ and dual variables $p$, we may consider only a subset of the collocation points. At each iteration, we select the collocation points with high $|p[u](\omega)|$ and only optimize inner weights $\omega$ of those. We call this subset of collocation points active points. However, in nonlinear PDE case, $|p[u](\omega)|$ needs to be computed at each iteration. 

\subsection{Stopping Criterion}
\label{app:algo_stopping}
We stop the iterations if

\[
    \text{Estimated Descent} < \epsilon  \text{   and   }  |p[u](\omega_{\omega_{N+1}})| -\alpha < \epsilon,
\]
which represent both insertion of new nodes and optimization of existing nodes cannot incur obvious descent to the loss function. $\epsilon$ is usually taken between $[10^{-5}, 10^{-3}]$.

\subsection{Pretraining}
\label{app:pretraining}
As mentioned earlier, we may initialize training using solutions obtained with a larger regularization parameter $\alpha$. This strategy often reduces computational cost and can occasionally yield higher-quality numerical solutions. For example, when solving the semilinear Poisson equation in \Cref{subsec:twobumps} with $K = 300$ and $\alpha = 10^{-3}$, we initialize the network using the solution computed with $\alpha = 10^{-2}$. In this case, such pretraining significantly reduces the overall computational cost (even when accounting for the training effort at $\alpha = 10^{-2}$) and achieves a smaller residual (see \Cref{fig:app_convergence}). This pretraining technique is of considerable practical importance and warrants further theoretical justification and algorithm development.

\begin{figure}[t]
    \centering
    \includegraphics[width=0.5\linewidth]{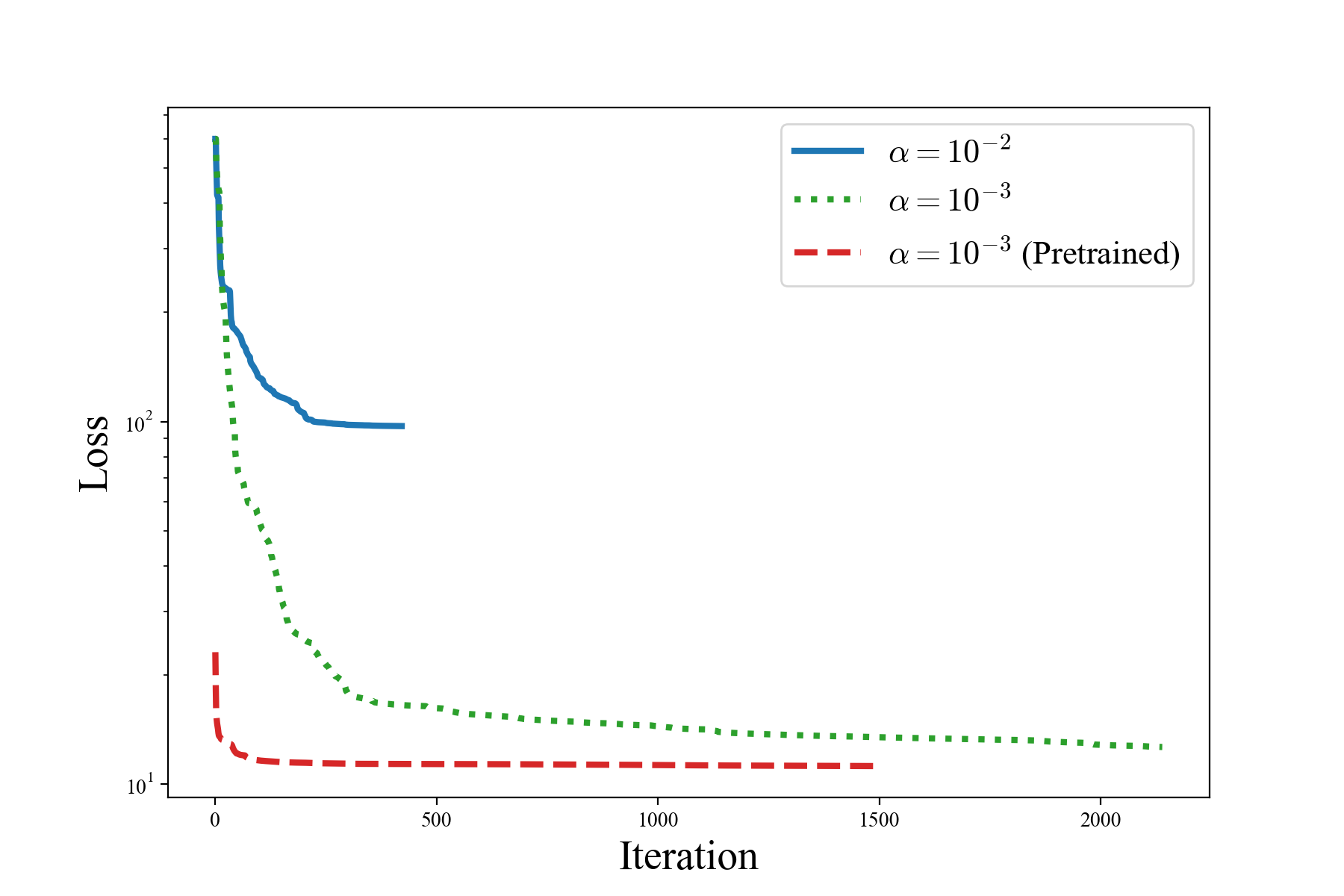}
    \caption{Convergence of residual loss for $\alpha=10^{-3}$ with/without pretraining.}
    \label{fig:app_convergence}
\end{figure}

\section{Choice of collocation points}
\label{app:collocation}
We observe that when randomly generated collocation points are used, the performance of our method degrades. Here we include the results for the semilinear Poisson equation in \Cref{subsec:twobumps}, also with comparison with the Gaussian Process method (see Table \ref{tab:two_bumps_uniform}). 
\begin{table}[t]
\centering
\caption{Errors, residual loss, and number of kernels for Sparse RBFNet and GP using \textbf{randomly} generated collocation points (in comparison to Table \ref{tab:two_bumps_uniform}). Case with $\alpha = 10^{-3}$($10^{-4}$) is solved with solution obtained by $\alpha=10^{-2}$($10^{-3}$) as initialization. Test results are evaluated on a $100\times 100$ grid in $D$. All results are averaged over 10 runs. For the Gaussian Process method, we retain the data even if some runs diverge due to random sampling.}
\setlength{\tabcolsep}{4pt}
\begin{tabular}{ccccc@{\hskip 18pt}ccc}
\toprule
& & \multicolumn{3}{c}{\textbf{Sparse RBFNet}} & \multicolumn{3}{c}{\textbf{Gaussian Process}} \\
\cmidrule(lr{2em}){3-5} \cmidrule(lr{0.2em}){6-8}
\shortstack{$K$ \\ $(K_1, K_2)$} & Metric
& $\alpha=1\text{e}{-2}$ & $\alpha=1\text{e}{-3}$ & $\alpha=1\text{e}{-4}$
& $\sigma=0.05$ & $\sigma=0.10$ & $\sigma=0.15$ \\
\specialrule{0.9pt}{1pt}{1pt}
\multirow{6}{*}{\shortstack{$400$ \\ $(324, 76)$}} 
& $L^2$ error      & $7.16\text{e}{-1}$ & $7.30\text{e}{-1}$ & $7.30\text{e}{-1}$ & $5.20\text{e}{-1}$ & $4.79\text{e}{-1}$ & $7.62\text{e}{-1}$ \\
& $L^\infty$ error  & $1.51\text{e}{+0}$ & $1.55\text{e}{+0}$ & $1.54\text{e}{+0}$ & $2.03\text{e}{+0}$ & $2.11\text{e}{+0}$ & $3.96\text{e}{+0}$ \\
& $\hat{L}$(train)        & $4.75\text{e}{+0}$ & $9.65\text{e}{-2}$ & $2.80\text{e}{-3}$ & -- & -- & -- \\
& $\hat{L}$(test)         & $2.63\text{e}{+2}$ & $2.56\text{e}{+2}$ & $2.55\text{e}{+2}$ & -- & -- & -- \\
& \#Kernels                & 46  & 75  & 87  & 400 & 400 & 400 \\
\midrule
\multirow{6}{*}{\shortstack{$900$ \\ $(784, 116)$}} 
& $L^2$ error      & $2.91\text{e}{-1}$ & $2.90\text{e}{-1}$ & $2.78\text{e}{-1}$ & $5.27\text{e}{-1}$ & $3.63\text{e}{-1}$ & $8.15\text{e}{-1}$ \\
& $L^\infty$ error   & $6.77\text{e}{-1}$ & $6.84\text{e}{-1}$ & $6.61\text{e}{-1}$ & $2.08\text{e}{+0}$ & $1.55\text{e}{+0}$ & $5.42\text{e}{+0}$ \\
& $\hat{L}$(train)        & $1.16\text{e}{+1}$ & $2.64\text{e}{-1}$ & $7.25\text{e}{-3}$ & -- & -- & -- \\
& $\hat{L}$(test)         & $9.65\text{e}{+1}$ & $7.84\text{e}{+1}$ & $7.52\text{e}{+1}$ & -- & -- & -- \\
& \#Kernels                & 58  & 108  & 135  & 900 & 900 & 900 \\
\midrule
\multirow{6}{*}{\shortstack{$2500$ \\ $(2304, 196)$}} 
& $L^2$ error      & $8.89\text{e}{-2}$ & $4.73\text{e}{-2}$ & $3.88\text{e}{-2}$ & $4.97\text{e}{-1}$ & $3.54\text{e}{-1}$ & $4.02\text{e}{-2}$ \\
& $L^\infty$ error    & $2.30\text{e}{-1}$ & $1.34\text{e}{-1}$ & $1.17\text{e}{-1}$ & $2.34\text{e}{+0}$ & $4.76\text{e}{+0}$ & $8.73\text{e}{-1}$ \\
& $\hat{L}$(train)        & $1.82\text{e}{+1}$ & $6.82\text{e}{-1}$ & $3.49\text{e}{-2}$ & -- & -- & -- \\
& $\hat{L}$(test)         & $3.48\text{e}{+1}$ & $7.13\text{e}{+0}$ & $3.93\text{e}{+0}$ & -- & -- & -- \\
& \#Kernels                & 68  & 117 & 155  & 2500 & 2500 & 2500 \\
\specialrule{1pt}{1pt}{1pt}
\end{tabular}
\label{tab:two_bumps_uniform}
\end{table}

There are several possible reasons for this decay in accuracy
\begin{enumerate}
    \item In this particular case, both our method and the Gaussian Process method have a significant drop in performance. This is likely due to the sampled collocation points failing to adequately capture the sharp transitions in the exact solution.
    \item 
    % Phase I in our algorithm is based on dual variable, which is further associated to the residue at collocation points. 
    With random points that cluster, overfitting might occur more rapidly at different parts of the domain, which may lead to non-useful kernel points being added in the insertion process.
    % \kp{I do not understand this. I would formulate that as: with random points that cluster, overfitting might occur more rapidly at different parts of the domain, which may lead to non-useful kernel points being added in the insertion process.}
\end{enumerate}

% In practice, it is crucial for a method to perform well under randomly selected collocation points. This highlights an important direction for future research.
% \kp{I would just assume that we need more random points than uniform to obtain the same coverage, which is well-known in different contexts. Often its close to a factor \(\sim 10\) compared to uniform or quasi uniform.}

In practice, it is crucial for a method to perform reliably when using randomly selected collocation points. However, achieving comparable coverage to uniform or quasi-uniform grids typically requires a substantially larger number of random points as observed in various numerical contexts. This underscores an important direction for future research on improving robustness and efficiency under random sampling.

\section{Besov space characterization}
\label{app:besov}
In this section, we characterize the RKBS norm in terms of a classical function space norm.
\begin{theorem}
Consider the unrestricted dictionary \(\Omega = \R^d \times (0, \sigma_{\max}]\)  with \(\sigma_{\max} = 1\).
Then, for the corresponding space
\[
\cV = \cV(\R^d) = \left\{ u(x) = \int_\Omega \varphi(x; \omega)\de\mu(\omega)  \;\big|\; \mu \in M(\Omega) \right\} 
\]
there is a continuous embedding of the Besov space \(B^{s}_{1,1}(\R^d)\)~\citep[as defined in, e.g., ][Section~2.1.1]{sawano2018theory} into $\cV$.
\end{theorem}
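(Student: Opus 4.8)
The plan is to exhibit an explicit atomic decomposition of any $f \in B^s_{1,1}(\R^d)$ into Gaussian features and then control the total variation of the resulting measure by the Besov norm. First I would recall the standard wavelet/atomic characterization of $B^s_{1,1}(\R^d)$: there is a smooth compactly supported (or Schwartz) function $\psi$ and a scaling function $\phi$ such that
\[
f = \sum_{k \in \Z^d} \lambda_{0,k}\, \phi(\cdot - k) + \sum_{j \geq 0} \sum_{k \in \Z^d} \lambda_{j,k}\, 2^{jd/2} \psi(2^j \cdot - k),
\]
with the equivalence of norms $\norm{f}_{B^s_{1,1}} \asymp \sum_{j \geq 0} 2^{j(s - d/2 + d)} \sum_k \abs{\lambda_{j,k}} + \sum_k \abs{\lambda_{0,k}}$ (I would fix the exact normalization by consulting \cite[Section~2.1.1]{sawano2018theory}). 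The key structural point is that each analyzing function is a dilate/translate of a single mother function at dyadic scale $2^{-j} \leq 1 = \sigma_{\max}$, which is exactly the structure the dictionary $\Omega = \R^d \times (0,\sigma_{\max}]$ supports.

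The heart of the matter is a single-scale lemma: the mother wavelet $\psi$ (and $\phi$) can be written as
\[
\psi(x) = \int_{\R^d} \int_0^{\sigma_{\max}} \varphi(x; y,\sigma)\, \de \kappa(y,\sigma)
\]
for some fixed finite signed measure $\kappa$ on $\R^d \times (0,\sigma_{\max}]$ with $\norm{\kappa}_{M} < \infty$. The natural way to produce such a $\kappa$ is to use that a Gaussian with a variable bandwidth, integrated against a suitable density in $\sigma$, can synthesize a Schwartz bump with any prescribed number of vanishing moments — e.g. by writing $\psi$ via its Fourier transform and using that $\int_0^{1} e^{-\sigma^2 \abs{\xi}^2/2} w(\sigma)\,\de\sigma$ can match $\hat\psi(\xi)$ up to a controllable tail, or more robustly by a Laplace-transform / heat-semigroup identity expressing $\psi$ as a superposition of heat kernels (recall $\varphi(\cdot;y,\sigma) = \sigma^{s-d}\hat\varphi((\cdot-y)/\sigma)$ is, up to the weight $\sigma^{s-d}$, the heat kernel at time $\sigma^2$). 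One clean route: since $\psi \in \mathcal{S}$ with $\hat\psi$ compactly supported away from $0$, write $\psi = (\text{heat kernel at time }1) * g$ for a rapidly decaying $g$, i.e. $\psi(x) = \int_{\R^d} \hat\varphi(x-y) g(y)\,\de y$, which is already of the desired form with $\sigma = 1$ fixed and $\de\kappa(y,\sigma) = g(y)\,\de y\, \delta_{\sigma=1}$, giving $\norm{\kappa}_M = \norm{g}_{L^1} < \infty$; the weight $\sigma^{s-d} = 1$ at $\sigma=1$ causes no trouble. (For $\phi$ the same argument applies.)

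Given the single-scale lemma, I would assemble the global decomposition: a term $2^{jd/2}\psi(2^j x - k)$ corresponds, after the change of variables $y \mapsto 2^{-j}(y+k)$, $\sigma \mapsto 2^{-j}\sigma$, to
\[
2^{jd/2}\psi(2^j x - k) = \int \varphi(x; 2^{-j}(y+k), 2^{-j}\sigma)\, 2^{jd/2} \cdot 2^{j(s-d)}\,\de\kappa(y,\sigma),
\]
using $\varphi(x;y',\sigma') = (\sigma')^{s-d}\hat\varphi((x-y')/\sigma')$ to track the power of $2^{-j}$. Since $2^{-j}\sigma \leq 2^{-j}\sigma_{\max} \leq \sigma_{\max}$, the resulting measure is supported in $\Omega$, and its total variation is $2^{j(s - d/2)}\norm{\kappa}_M$. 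Summing over $j,k$ and inserting the coefficients $\lambda_{j,k}$ gives a measure $\mu$ with $\cN\mu = f$ and
\[
\norm{\mu}_{M(\Omega)} \leq \norm{\kappa}_M \Big( \sum_k \abs{\lambda_{0,k}} + \sum_{j\geq 0} 2^{j(s-d/2)} \sum_k \abs{\lambda_{j,k}} \Big) \lesssim \norm{f}_{B^s_{1,1}},
\]
where I must double-check that the Besov norm weight $2^{j(s-d/2)}$ (rather than $2^{j(s - d/2 + d)}$ or similar) matches after accounting for the $L^1$-normalization convention; this is the bookkeeping step where an off-by-$d$ in the exponent is the main risk, so I would be careful to state the decomposition with a concrete normalization and verify the scaling of $\varphi$ against it. The convergence of the sum in $M(\Omega)$ (weak-$\ast$, hence pointwise on compacts via Proposition~\ref{prop:smoothness_feature_function}) follows from absolute summability of total variations, which also legitimizes interchanging the sum with the integral defining $\cN$. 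Therefore $f \in \cV(\R^d)$ with $\norm{f}_{\cV} \leq \norm{\mu}_{M(\Omega)} \lesssim \norm{f}_{B^s_{1,1}}$, which is the claimed continuous embedding. The main obstacle, as flagged, is the single-scale synthesis lemma together with getting the dyadic scaling exponents exactly right; everything else is standard Besov atomic-decomposition machinery.
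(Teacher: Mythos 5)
Your proposal is correct in outline, and its engine is the same as the paper's: a function whose Fourier transform is smooth and compactly supported can be deconvolved by the Gaussian, i.e.\ written as a convolution of $\hat\varphi_\sigma$ with an $L^1$ function whose norm is scale-invariant under dilation. Where you differ is the decomposition you feed into this lemma. You route the argument through the wavelet (atomic) characterization of $B^s_{1,1}$, apply the deconvolution once to a band-limited mother wavelet and scaling function, and then rescale and translate each atom into $\Omega$; the paper instead applies the deconvolution directly to the Littlewood--Paley blocks $u_j = \phi_j(\sD)u$, setting $\mu_j = [\psi_{j+1}/g_j](\sD)u_j$ with $g_j = \cF\hat\varphi_j$, which works straight from the definition of the $B^s_{1,1}$ norm in the cited reference and so needs no norm-equivalence theorem. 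Your bookkeeping does close: with $L^2$-normalized wavelets the correct weight is $2^{j(s+d/2-d)} = 2^{j(s-d/2)}$, and your change of variables yields $2^{jd/2}\cdot 2^{j(s-d)}\norm{\kappa}_{M} = 2^{j(s-d/2)}\norm{\kappa}_{M}$ per atom, so the exponents match (the $2^{j(s-d/2+d)}$ in your first statement of the characterization is the one that is off). Two points you should make explicit: (i) the single-scale lemma requires a band-limited wavelet (e.g.\ Meyer); for compactly supported wavelets $\hat\psi(\xi)e^{\abs{\xi}^2/2}$ is not integrable and the deconvolution fails; (ii) the scaling function $\phi$ has $\hat\phi$ supported near the origin rather than away from it, which is harmless since only compact support and smoothness of $\hat\phi$ are used. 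The paper's route buys economy (no wavelet machinery); yours buys a reusable single-scale synthesis statement for a fixed mother atom, which would be convenient if the Gaussian were replaced by another feature function.
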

\begin{proof}
Let \(u \in B_{1,1}^s(\R^d)\). Thus, it can be decomposed as
\[
u = u_0 + \sum_{j=1}^\infty u_j
\text{ with }
u_0 = \psi(\sD)u, u_j = \phi_j(\sD)u
\text{ and } \norm{f}_{B_{1,1}^s} = \norm{u_0}_{L^1} + \sum_{j=1}^\infty 2^{j\,s} \norm{u_j}_{L^1} < \infty;
\]
~\citep[see][Section~2.1.1]{sawano2018theory}. Here, \(\psi \colon \R^d \to [0,1]\) is a function in \(C_c^\infty(\R^d)\) with
\[
\chi_{B_1(0)}(\xi) \leq \psi(\xi) \leq \chi_{B_2(0)}(\xi)
\]
and \(\phi_j = \psi_j - \psi_{j-1}\), where \(\psi_j(\xi) = \psi(2^{-j} \xi)\). Moreover, for any such function \(\psi\) (or \(\phi_j\)) the symbol \(\psi(\sD)\) denotes the multiplication in Fourier space \(\psi(\sD) u = \cF^{-1}[\psi \cdot \cF[u]]\), where \((\psi \cdot \cF[u])(\xi) = \psi(\xi) \cF[u](\xi)\).

Next, we show that every \(u_j\) can be represented as a convolution with a Gaussian \(\hat{\varphi}_j(x) = 2^{jd}\hat{\varphi}(2^{j} x)\) with scale \(\sigma_j = 2^{-j}\) in the form
\begin{equation}
\label{eq:rec_mu}
u_j = \hat{\varphi}_j * \mu_j \quad\text{with } \mu_j \in L^1(\R^d).
\end{equation}
We argue this by defining the Fourier transform of \(\hat{\varphi}_j\) as
\(g_j = \cF\hat{\varphi}_j\) with \(g_j(\xi) = g_0(2^{-j}\xi)\) and
\[
\mu_j = [\psi_{j+1}/g_j](\sD) u_j.
\]
The multiplier \(\psi_{j+1}/g_j\) is \(C^\infty\) and compactly supported, and thus its inverse Fourier transform \(\varphi^{-1}_j = \cF^{-1}(\psi_{j+1}/g_j)\) is in \(L^1(\R^d)\). Due to \(\varphi^{-1}_j(x) = 2^{j d} \varphi^{-1}_j(2^j x)\) the \(L^1\) norm is independent of \(j\) and equal to a constant \(c\), which implies
\[
\norm{\mu_j}_{L^1} = \norm{[\psi_{j+1}/g_j](\sD) u_j}_{L^1}
= \norm{\varphi^{-1}_j * u_j}_{L^1}
\leq c\norm{u_j}_{L^1}.
\]
Moreover, due to \(g_j \psi_{j+1}/g_j * \phi_j = \phi_j\) and \(g_0 \psi_{1}/g_0 * \psi = \psi\), we have~\eqref{eq:rec_mu} as claimed.
Now, we define the measure
\[
\de\mu(y,\sigma) = \sum_{j=0}^\infty\sigma^{-s}_{j}\delta_{\sigma_j}(\sigma) \mu_j(y) \de y
= \sum_{j=0}^\infty 2^{j\,s}\delta_{2^{-j}}(\sigma) \mu_j(y) \de y.
\]
We can directly verify that \(u = \cN \mu\) and 
\[
\norm{u}_{\cV(\R^d)} \leq
\norm{\mu}_{M(\Omega)} = \sum_{j=0}^\infty 2^{j\,s} \norm{\mu_j}_{L^1}
\leq c \sum_{j=0}^\infty 2^{j\,s} \norm{u_j}_{L^1}
= c \norm{u}_{B^s_{1,1}(\R^d)}.
\qedhere
\]
\end{proof}
\revision{
\section{Burgers equation with small viscosity}
\label{app:burgers_small_viscosity}
\begin{figure}[t]
    \centering
    
    \begin{subfigure}{0.495\textwidth}
        \centering
        \includegraphics[width=\textwidth]{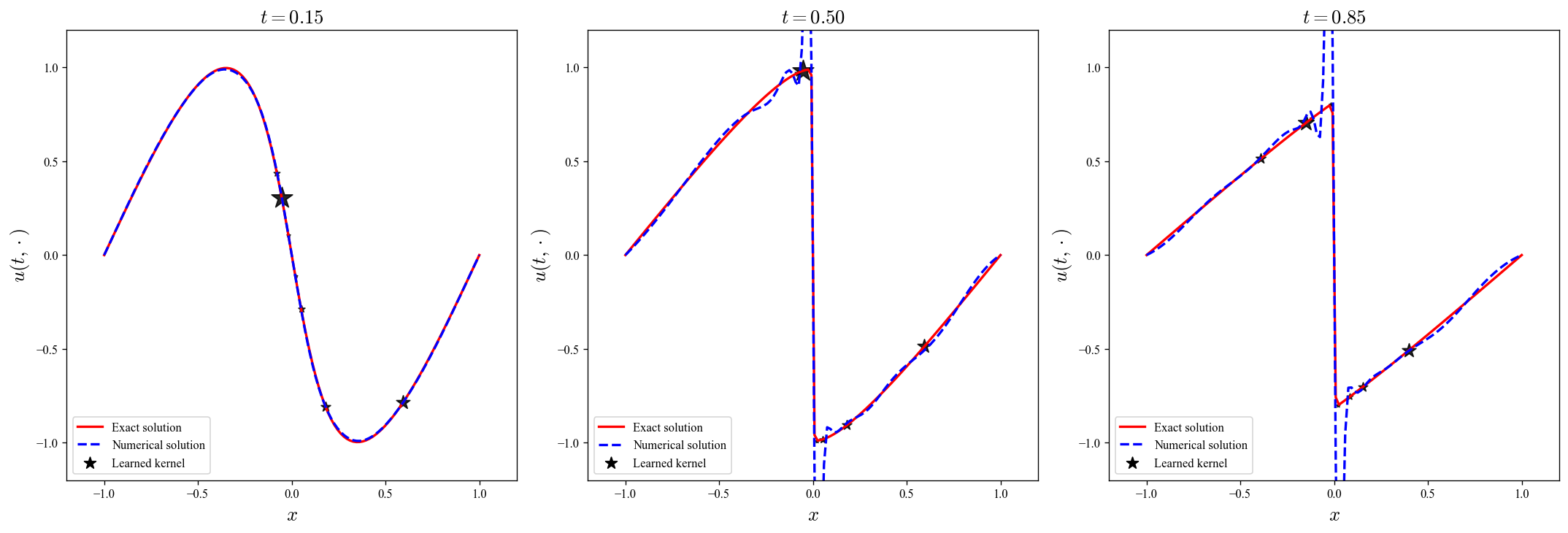}
        \caption{}
    \end{subfigure}
    % \hfill
    \begin{subfigure}{0.495\textwidth}
        \centering
        \includegraphics[width=\textwidth]{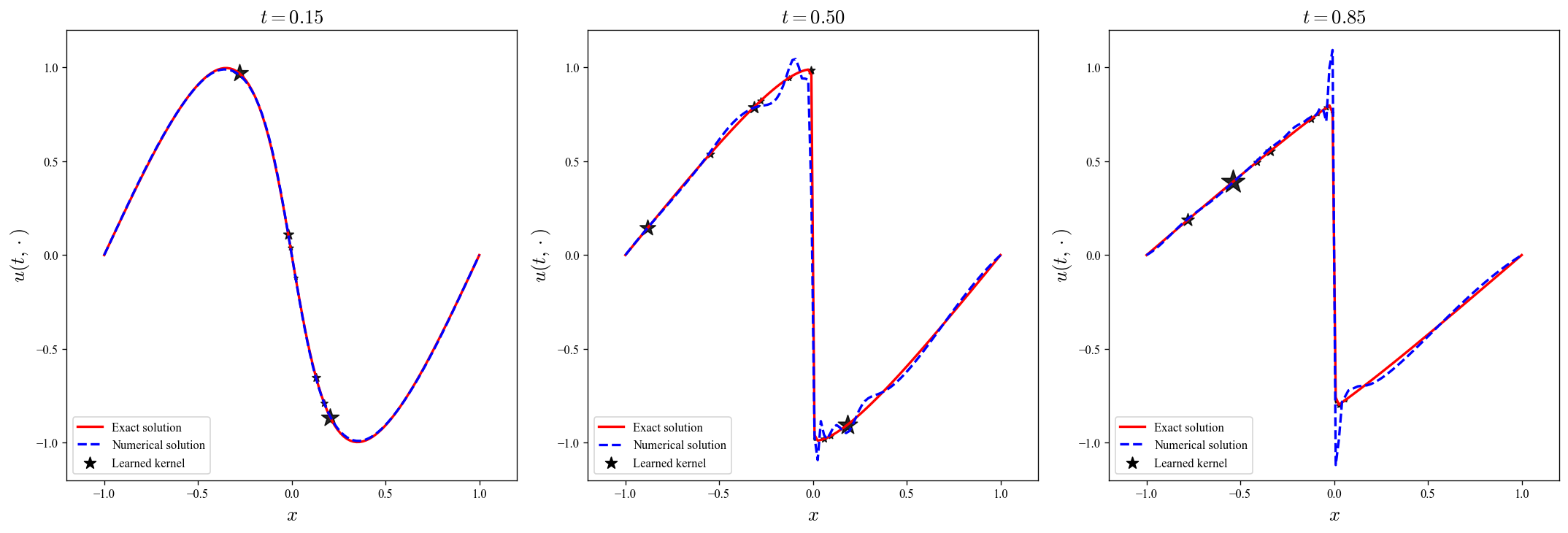}
        \caption{}
    \end{subfigure}

    \vspace{0.6em}

    \begin{subfigure}{0.495\textwidth}
        \centering
        \includegraphics[width=\textwidth]{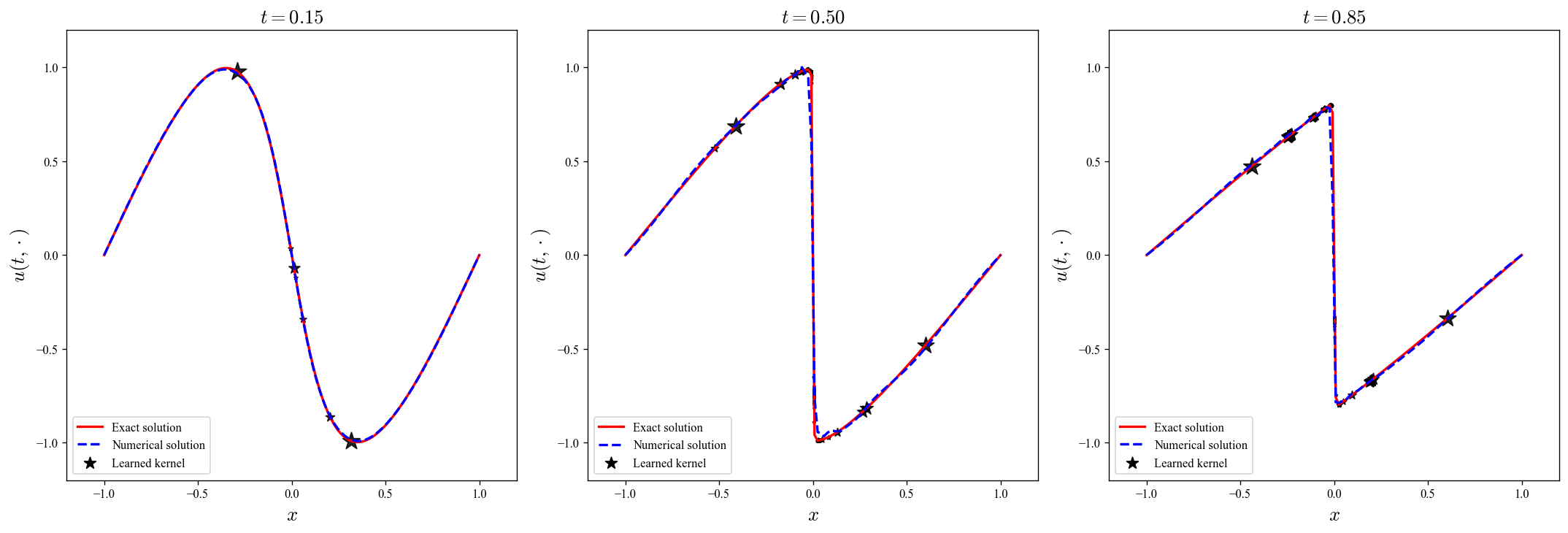}
        \caption{}
    \end{subfigure}
    % \hfill
    \begin{subfigure}{0.495\textwidth}
        \centering
        \includegraphics[width=\textwidth]{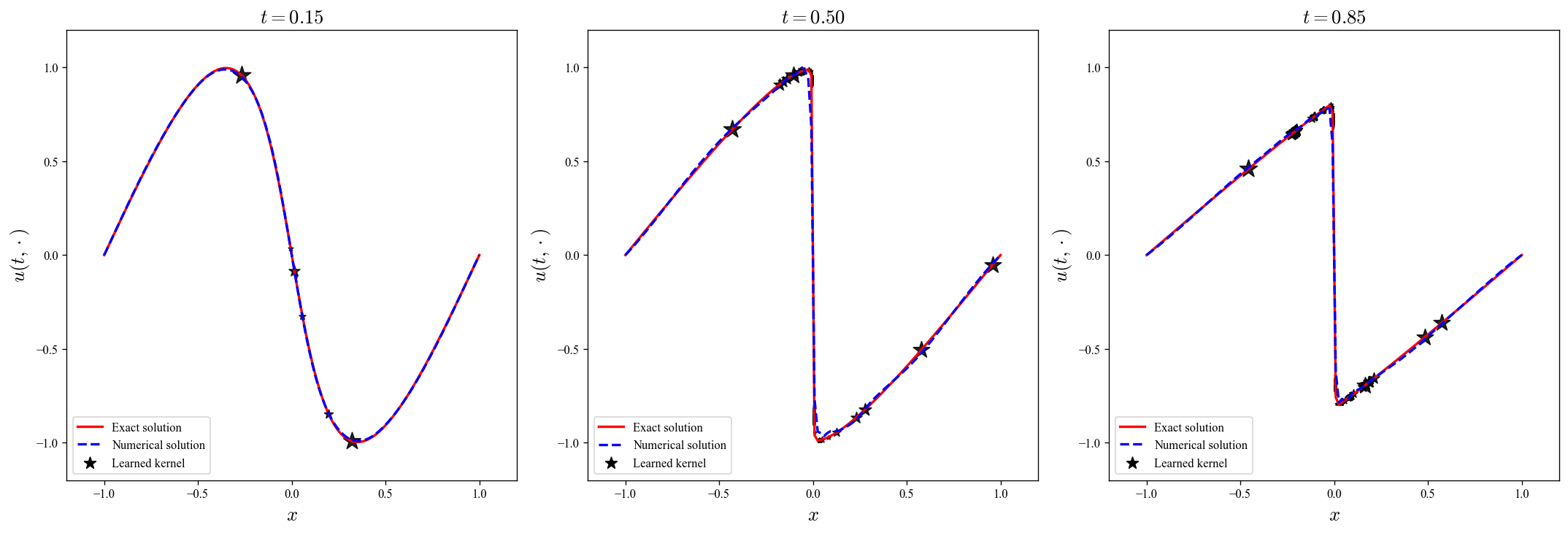}
        \caption{}
    \end{subfigure}
    
    \caption{Small-viscosity Burgers equation ($\nu = 0.002$): comparison of solution slices with different numbers of collocation points ($K$). All results are obtained using time step $\Delta t = 0.01$ and regularization parameter $\alpha = 10^{-4}$. (a) $K = 40$; (b) $K = 80$; (c) $K = 200$; (d) $K = 400$. The size of each $\star$ indicates the bandwidth of the corresponding kernel.}
    \label{fig:burgers_small_viscosity}
\end{figure}

We briefly discuss the behavior of the proposed method for the viscous Burgers equation in a more challenging small-viscosity regime. We consider the same problem as in \Cref{subsec:burgers} with a general viscosity parameter $\nu$:
\begin{align*}
    \partial_{t}u + u \partial_x u -\nu \partial_{x}^{2} u &= 0, \ \forall (t, x) \in (0, 1]\times(-1, 1)\\
    u(0, x) &= -\sin(\pi x)\\
    u(t, -1) &= u(t, 1) = 0.
\end{align*}
In \Cref{subsec:burgers}, the viscosity is set to $\nu = 0.02$.
As $\nu$ decreases, the solution develops a much steeper shock, making the approximation significantly more challenging. 
We solve the equation with $\nu = 0.002$ using the same time step $\Delta t = 0.01$ and regularization parameter $\alpha = 10^{-4}$. All other settings remain the same as in \Cref{subsec:burgers}. Figure~\ref{fig:burgers_small_viscosity} compares results with different numbers of collocation points $K = 40, 80, 200, 400$. We observe that, as the viscosity decreases, substantially more collocation points are required to resolve the sharper shock. In particular, the solution improves significantly when increasing $K$ from $40$ to $200$, while the additional gain from $K=200$ to $K=400$ is relatively modest, indicating that a sufficient spatial “resolution” has already been reached.

}

\vskip 0.2in
\bibliography{ref}

\end{document}